\def\R{\mathbb R}
\def\H{\mathbb H}
\def\N{\mathbb N}
\def\C{\mathbb C}
\newtheorem{thm}{Theorem}[section]
\newtheorem{lemm}[thm]{Lemma}
\newtheorem{cor}[thm]{Corollary}
\newtheorem{prop}[thm]{Proposition}
\theoremstyle{remark}
\theoremstyle{definition}
\title{Minimal surface entropy and average area ratio}
\author{Ben Lowe and Andr\'e Neves}
\address{Princeton University \\ Fine Hall \\ Princeton NJ 08544 \\ USA}
\email{benl@princeton.edu} 
\address{University of Chicago \\ Department of Mathematics \\ Chicago IL 60637\\ USA}
\email{aneves@math.uchicago.edu}
\thanks{The second author is partly supported by NSF  DMS-2005468 and a Simons Investigator Grant.}
\begin{document}

	\begin{abstract}
		On any closed hyperbolizable $3$-manifold, we find a sharp relation  between the minimal surface entropy (introduced by Calegari-Marques-Neves) and the average area ratio  (introduced by Gromov), and we show that, among metrics $g$ with  scalar curvature  greater than or equal to $-6$, the former is maximized by the hyperbolic metric. One corollary is to solve a conjecture of Gromov regarding the average area ratio.%{We also prove a sharp lower bound for the area in $g$ of a surface homotopic to a totally geodesic surface in the hyperbolic metric.} 
		
		{Our proofs} use Ricci flow with surgery and laminar measures invariant under a $\text {PSL}(2,\R)$-action.
	\end{abstract}
	
	\maketitle 
	\section{Introduction}
	
	The interplay between scalar curvature, area, and topology is a beautiful chapter in mathematics.  For an extended overview of the subject containing the most recent developments, the reader can consult  Gromov's Four Lectures on Scalar Curvature  \cite{gromov4}. 
	
	We are interested in studying the area functional on closed  manifolds admitting a hyperbolic metric. Variational methods, which were pioneered by Schoen and Yau, are less effective in this setting  because  the restrictions imposed by the second variation of area are not sharp in the hyperbolic case. On the other hand, the study of length, curvature, and topology has a very rich literature in the negative curvature case, partly because the fact that the geodesic flow is Anosov brings an extra structure to the problem. In the same vein, there is an extra structure for minimal surfaces coming from a ``natural'' $\text {PSL}(2,\R)$-action on the space of minimal  immersions {(formalized very clearly by Labourie in \cite{Labourie}}). The general principle we follow, initiated by Calegari--Marques--Neves in \cite{cal-marques-neves}, is to combine the rigidity of that action (due to Ratner and Shah) with  geometric methods to obtain sharp relations between area, scalar curvature, and minimal surfaces.  One consequence is to answer a  conjecture of Gromov regarding the least possible value for the average area ratio.

	Using spin methods,  Min-Oo \cite{min-oo} proved a rigidity theorem for asymptotically hyperbolic  manifolds (see also \cite{CH,Wang}) and Ono \cite{ono},  Davaux \cite{davaux} proved sharp spectral inequalities for hyperbolizable manifolds.  Andersson, Cai, and Galloway \cite{andersson-cai-galloway} proved a  positive mass theorem for asymptotically hyperbolic  manifolds using variational methods.
	
	Let $(M,g)$ be a closed Riemannian orientable $3$-manifold admitting a hyperbolic metric $g_0$.  We refer the reader to   Section \ref{prelim} for all the definitions.
	
	\subsection{Minimal Surface Entropy}  
	Let $S_{\varepsilon}(M)$ denote the set of all homotopy class $\Pi$ of essential surfaces whose limit set is a $(1+\varepsilon)$-quasicircle. An important result of  Kahn--Markovic \cite{kahn-markovic2, kahn-markovic} establishes that $S_{\varepsilon}(M)\neq\emptyset$  (assuming $\varepsilon>0$) and provides an estimate for the cardinality of $S_{\varepsilon}(M)$. We define
	$$\text{area}_{g}(\Pi):=\inf \{\text{area}_{g}(S): S\in \Pi\}.$$
	Inspired by  the following expression for the volume entropy $E_{vol}(g)$ on negatively curved manifolds
	\begin{equation*}
		E_{vol}(g)=\lim_{L\to\infty}\frac{\ln\#\{{\rm length}_g(\gamma)\leq L:\,\gamma\text{ closed geodesic in }(M,g)\}}{L}
	\end{equation*}
	Calegari, Marques, and Neves defined in \cite{cal-marques-neves} the {\em minimal surface entropy}%\footnote{In \cite{cal-marques-neves} the definition used $\liminf$ instead of $\limsup$ but all the results proven there are also true with this definition.} 
	$$
		E(g):=\lim_{\varepsilon\to 0}\limsup_{L\to\infty}\frac{\ln \#\{\text{area}_g(\Pi)\leq 4\pi(L-1):\Pi\in S_{\varepsilon}(M)\}}{L\ln L}.
	$$
	The authors showed in \cite{cal-marques-neves} that $E(g_0)=2$ and that, among metrics $g$ with sectional curvature less than or equal to $-1$, $E(g)\geq E(g_0)=2$ and equality implies $g=g_0$ (up to isometry). The inequality follows almost directly from Gauss equation. On the other hand, to show that the hyperbolic metric is the unique minimizer the authors had to use  the rigidity of $\text {PSL}(2,\R)$-orbits in the frame bundle (due to Ratner  \cite{ratner} and Shah \cite{shah}).
	
	{The next entropy $E_{\bar\mu}(g)$  was introduced in \cite{marques-neves-conf} and counts only the homotopy classes for which the area-minimizing surfaces with respect to $g_0$ become equidistributed.

	For sufficiently small $\varepsilon$, each $\Pi\in  S_{\varepsilon}(M)$ induces  a  unit Radon measure $\mu_\Pi$ on the frame bundle $F(M)$ which is obtained by integration over the unique least area surface in $\Pi$ with respect to $g_0$ (see \eqref{pi.measure}). The homogeneous measure on $F(M)$ is denoted by $\bar\mu$ and called the Liouville measure.
	
	Let $\rho$ be a metric  which topologizes   the space of unit Radon measures on $F(M)$ and set ${S}_{\varepsilon,\bar\mu}(M)=\{\Pi\in S_\varepsilon(M): \rho(\nu_\Pi,\mu)<\varepsilon\}$. In \cite{marques-neves-conf} the {\em minimal surface entropy $E_{\bar\mu}(g)$ for $\bar\mu$}  of $(M,g)$  was defined as 
\begin{equation}\label{asymp.area}
E_{\bar\mu}(g)=\lim_{\varepsilon\to 0}\limsup_{L\to\infty} \frac{\ln \#\{\text{area}_g(\Pi)\leq 4\pi(L-1):\Pi\in S_{\varepsilon,\bar\mu}(M)\}}{L\ln L}.
\end{equation} 
Since the space of unit Radon measures on $F(M)$ is compact, the definition of $E_{\bar\mu}(g)$ does not depend on the metric $\rho$.  At the end of Section 2 in \cite{marques-neves-conf} it is shown that $E(g)$ and $E_{\bar\mu}(g)$   do not change if one uses $\liminf$ instead of $\limsup$.  

In  \cite{marques-neves-conf} the authors prove several sharp inequalities relating $E_{\bar\mu}(g)$  with the Liouville entropy. Some of the properties proven there are
\begin{itemize}
\item   $E_{\bar\mu}(g_0)=2$ and $E_{\bar\mu}(g)\leq E(g)$ for every metric $g$;
\item $E_{\bar\mu}(g)=E(g)$ for every metric $g$ on $M$ if $(M,g_0)$ contains no closed totally geodesic surfaces if (\cite[Proposition 5.1]{marques-neves-conf});
\item If $(M,g_0)$ contains a closed totally geodesic surface, there is $g$ near  $g_0$ for which $E_{\bar\mu}(g)<E(g)$ (follows from Corollary 1.1 with Theorem 1.2 of  \cite{marques-neves-conf}).
\end{itemize}

	\subsection{Average Area Ratio} Consider another closed hyperbolic $3$-manifold  $N$ with an hyperbolic metric $h_0$. The Grassmanian bundle over $N$ (or $M$) of unoriented  tangent $2$-planes  to $N$ (or $M$) is denoted by $Gr_2(N)$ (or  $Gr_2(M)$).  The metric $h_0$ induces a natural metric on $Gr_2(N)$.

	Given $T:M\rightarrow N$   a smooth map, the following function is defined for almost all $(x,P)\in Gr_2(N)$:
	$$(x,P)\in Gr_2(N)\,\mapsto \,|\wedge^2T^{-1}|(x,P):= \lim_{\delta\to 0}\frac{{\rm area}_g(T^{-1}(P_{\delta}))}{\delta},$$
	where $P_{\delta}\subset N$ is a disc transversal to $T$, tangent to $P$ at $x\in N$, and  area $\delta$.
	
	Gromov  \cite[page 73]{gromov}  defined ${\rm Area}_T(g/h_0)$ the {\em average area ratio of} $T$ to be  
	$${\rm Area}_T(g/h_0):=\frac{1}{{\rm vol}_{h_0}(Gr_2(N))}\int_{Gr_2(N)} |\wedge^2T^{-1}|(x,P) dV_{h_0}(x,P).$$
	A {more detailed}  definition is  in \eqref{av.area.ratio}.

	In the same paper, Gromov used a second variation argument to show that ${\rm Area}_T(g/h_0)\geq {\rm degree}\,T/3$ if the scalar curvature of $g$ satisfies $R(g)\geq -6$. The reason he got the factor $1/3$ is because the second variation of area is not sharp on hyperbolic manifolds. He conjectured in Remark 2.4.C' of \cite{gromov} that  assuming $R(g)\geq -6$
	\begin{center}
		{\em  ${\rm Area}_T(g/h_0)\geq {\rm degree}\,T$ holds true and it is sharp.}
	\end{center}
	This question was also addressed in Section 3.H  of \cite{gromov2} and in the more recent \cite{gromov4} at the end of Section 5.9.
	
	 For surfaces, the correspondent problem was solved by Katok. On a closed surface with two negatively curved metrics $\sigma, \sigma_1$,  Katok \cite{katok} defined the average length ratio $L(\sigma/\sigma_1):=\int_{T^1S}||v||_\sigma dV_{\sigma_1}(v)$ (he used a different notation). With $h(\sigma)$ denoting the topological entropy of $\sigma$,  he showed \cite[Theorem A]{katok} that $h(\sigma)L(\sigma/\sigma_1)\geq h(\sigma_1)$. Thus when $\sigma_1$ is hyperbolic and $\sigma$ has Gaussian curvature $K(\sigma)\geq -1$ we have $h(\sigma)\leq h(\sigma_1)$ and so $L(\sigma/\sigma_1)\geq 1$. 
	
	\subsection{Motivation} We follow the spirit of \cite{gromov,gromov2} and explain, informally, one of the  motivations behind ${\rm Area}_{\rm Id}(g/g_0)$. An important object in the study of negatively curved manifolds is the $1$-dimensional foliation of the unit tangent bundle where each leaf is {an orbit for the}  geodesic flow. 
	
	Gromov proposed the following surface analogue: A $2$-dimensional foliation $\mathcal L$ of $Gr_2(M)$ is a family $\{S_{\lambda}\}_{\lambda\in\Lambda}$ of complete surfaces of $M$ so that through every point in $(p,\tau)\in Gr_2(M)$ there is a unique $\lambda$ so that $S_\lambda$ passes through $p$ with tangent plane $\tau$. Laminations where each leaf is a stable minimal surface are analogous to the geodesic flow foliation of the unit tangent bundle. There is a canonical foliation $\mathcal L_0$ of $Gr_2(M)$ whose leafs are totally geodesic planes. 
	
	Given a metric $g$ on $M$ we obtain an area form on each leaf of $\mathcal L$.  Assuming   $\mathcal L$ has a transversal measure  we can integrate the area of each leaf with respect to this measure and obtain $vol_g(\mathcal L)$. 
	
	The general question Gromov asks is how low can {one} make $vol_g(\mathcal L)$ subject to the constraint that $R(g)\geq -6$. There is a  ``natural''  transversal measure on  $\mathcal L_0$ such that  ${\rm Area}_{{\rm Id}}(g/g_0)$ coincides with $vol_g(\mathcal L_0)/vol_{g_0}(\mathcal L_0)$. Therefore, Gromov's conjecture when $T={\rm Id}$ can be rephrased as $$vol_g(\mathcal L_0)\geq vol_{g_0}(\mathcal L_0)\quad\mbox{if}\quad R(g)\geq -6.$$
	
	The following comparison is also interesting: if we consider a $3$-dimensional foliation of $Gr_3(M)\simeq M$, there is only one leaf, the (unit) transverse measure is trivial, and given a metric on $g$, the volume of this foliation is  $vol_g(M)$. Schoen conjectured that  $vol_g(M)\geq vol_{g_0}(M)$ among all metrics with $R(g)\geq -6$ and this was proven by Perelman using Ricci flow (see \cite{anderson} for the argument).

	\subsection{Main Theorems}
	The next theorem uses Ricci flow in the spirit of  \cite{bben, marques-neves} and $\text {PSL}(2,\R)$-invariant laminar measures.
	\begin{thm}\label{mse.thm} Suppose $(M,g)$ has $R(g)\geq -6$. Then $${E_{\bar\mu}(g)\leq 2}$$ and equality occurs if and only if $g$ is isometric to the hyperbolic metric.
	\end{thm}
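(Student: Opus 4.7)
Since Calegari-Marques-Neves already established $E(g_0) = 2$, the aim is to prove $E(g) \leq 2$ together with the rigidity statement. The strategy is to run the Ricci flow with surgery from $g$ toward $g_0$, derive an area-evolution inequality for minimizing almost-Fuchsian surfaces, and use $\text{PSL}(2,\R)$-invariant laminar measures (together with Ratner-Shah rigidity) to convert that pointwise inequality into a sharp averaged area comparison.

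I would first run the flow $\partial_t g = -2\,\ric(g) - 4g$, for which $g_0$ is stationary and $R \geq -6$ is preserved by the maximum principle (in dimension three, $|\ric|^2 \geq R^2/3$ ensures that $\partial_t R \geq 0$ at a point where $R = -6$). Arguing as in \cite{bben, marques-neves}, Perelman's work yields a long-time solution with surgery whose thick part converges to $g_0$ in $C^\infty_{\mathrm{loc}}$, so $\area_{g(t)}(\Pi) \to \area_{g_0}(\Pi)$ as $t \to \infty$ for each almost-Fuchsian class $\Pi \in S_\varepsilon(M)$. If $\Sigma_t \in \Pi$ is an area minimizer in $(M, g(t))$, the first variation combined with the minimizing property gives
\begin{equation*}
\frac{d^{+}}{dt}\area_{g(t)}(\Pi) \;\leq\; \int_{\Sigma_t}\bigl(-R + \ric(\nu,\nu) - 4\bigr)\,dA_{g(t)}.
\end{equation*}
The integrand is not sign-definite pointwise, but the key observation is that averaging $\ric(\nu, \nu)$ over unit vectors at a single point of $M$ yields $R/3$, so the directionally-averaged integrand equals $-2R/3 - 4$, which is non-positive exactly when $R \geq -6$.

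To turn this directional averaging into a statement about almost-Fuchsian classes, I would sum the area-evolution inequality over the set $\mathcal{C}_L$ of classes with $\area_g(\Pi) \leq 4\pi(L-1)$. In Labourie's formalism \cite{Labourie}, each almost-Fuchsian minimal immersion lifts to a $\text{PSL}(2,\R)$-orbit in the bundle of minimal immersions, and Ratner-Shah rigidity applied to the resulting $\text{PSL}(2,\R)$-invariant laminar measures forces the corresponding measures on $Gr_2(M)$ to equidistribute toward the Liouville measure as $L \to \infty$ and $\varepsilon \to 0$. Combining this equidistribution with the directional-averaging observation, and integrating in $t$ from $0$ to $\infty$, yields
\begin{equation*}
\sum_{\Pi \in \mathcal{C}_L}\area_g(\Pi) \;\geq\; \sum_{\Pi \in \mathcal{C}_L}\area_{g_0}(\Pi) - o\bigl(\#\mathcal{C}_L \cdot L\bigr).
\end{equation*}
Since $\area_{g_0}(\Pi) \geq 4\pi(L_\Pi - 1)$ for every almost-Fuchsian class of genus $L_\Pi$, this forces the majority of classes in $\mathcal{C}_L$ to have genus at most $(1+o(1))L$; comparing $\#\mathcal{C}_L$ with the hyperbolic counting function of \cite{cal-marques-neves} then produces $E(g) \leq 2$.

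For rigidity, $E(g) = 2$ forces each inequality above to be asymptotically sharp, so $-R + \ric(\nu, \nu) - 4$ must vanish along the equidistributed collection of surfaces. Combined with the equidistribution, this gives $R \equiv -6$ and $\ric(\nu, \nu) \equiv -2$ in every direction, hence $\ric(g) = -2g$; in dimension three this Einstein condition implies constant sectional curvature $-1$, so $g$ is isometric to $g_0$. The main obstacle is making the equidistribution argument effective: one needs Ratner-Shah rigidity with error terms uniform as $L \to \infty$ and $\varepsilon \to 0$ and compatible with the fact that the ambient metric is merely a long-time Ricci-flow evolution away from $g_0$, so that the averaging error remains $o(\#\mathcal{C}_L \cdot L)$ and the sharp entropy inequality survives in the limit.
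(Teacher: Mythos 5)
Your high-level framing (Ricci flow with surgery, laminar measures, Ratner--Shah rigidity) matches the paper's strategy, but two of your central steps are wrong in ways that touch the core of the argument.

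First, the ``directional averaging'' step — averaging $\mathrm{Ric}(\nu,\nu)$ over the unit sphere at each point to get $R/3$ — is precisely Gromov's original factor-$1/3$ argument, the one this paper is trying to get past. The paper does \emph{not} average over directions. Instead it applies the Gauss equation and Gauss--Bonnet to the minimizing leaf directly, which gives the \emph{pointwise} (leaf-by-leaf) inequality
$$\frac{d}{dt}\mathrm{area}_{g_t}(\Pi)\leq 4\pi(l-1)-\mathrm{area}_{g_t}(\Pi)$$
whenever $R(g_t)\geq -6$ (see \eqref{area.evolution}). This ODE is what produces the exact exponential rate $1-\delta e^{-t}$ in \eqref{loweRF}, and the sharp constant $2$. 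Your averaged inequality can only control $\frac{d}{dt}\mathrm{area}$ after summing over a family, which both loses sharpness and forces you into a summed counting argument for which ``average genus $\lesssim L$'' does not imply the per-class bound you need; the paper's counting step relies on the per-class ratio bound from Theorem \ref{asympt.thm2}.

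Second, and more seriously, the claim that ``Ratner--Shah rigidity forces the corresponding measures on $Gr_2(M)$ to equidistribute toward the Liouville measure'' is false. Ratner's theorem classifies the ergodic $\mathrm{PSL}(2,\R)$-invariant measures on $F(M)$ as either the homogeneous Lebesgue measure or measures supported on frame bundles of totally geodesic surfaces — and the latter really do occur (manifolds with totally geodesic surfaces are explicitly the hard case, as the first author's earlier paper \cite{lowe} notes). The paper's actual mechanism is quite different: one passes to a limit measure $\bar\mu$ and shows that the quantity $\bar\mu(\theta(\bar h))$ vanishes for \emph{every} ergodic component, including the ones supported on totally geodesic $F(S)$. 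This requires the precise asymptotics $g_t\simeq g_0+e^{-t}\bar h$ from Proposition \ref{ricci-stable}, the fact that $\bar h$ is the lowest eigentensor of $\mathcal A$ (hence trace-free and divergence-free), and the identity $\Delta_S f+6f=\mathrm{div}_S X$ of Lemma \ref{eigenfunction.lemm}, which forces $\int_S \bar h(n,n)\,dA=0$. None of this appears in your outline, and without it the argument collapses exactly on manifolds containing closed totally geodesic surfaces.

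Your rigidity argument is also off: you claim sharpness forces $R\equiv -6$ and $\mathrm{Ric}(\nu,\nu)\equiv -2$ in every direction, but you only have averaged equalities. The paper's rigidity proof instead runs the flow for a short time, invokes the strong maximum principle to get $R(g_t)\geq -6+2\delta$ if $g$ is not Einstein, and derives a contradiction from the improved ODE.
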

	
	This result was shown by the first author in \cite{lowe} under the condition that the (normalized) Ricci flow starting at $g$ converges to the hyperbolic metric faster than $e^{-ct}$ for some $c>1$. As it is pointed out in \cite{lowe}, there are  closed manifolds  $\H^3/ \Gamma$ for which that will happen for all metrics and  closed manifolds  $\H^3/ \Gamma$ with metrics for which that will not happen.  Manifolds containing closed embedded totally geodesic  surfaces fall into the second category. 
	Later the second author shared his notes containing an approach to the general case relying on $\text {PSL}(2,\R)$-invariant laminar measures. Both authors decided to join their efforts and write a single paper containing that and other related results.  
	
	{Next we outline a construction showing the existence of  a closed hyperbolic $3$-manifold $M$ having a metric $g$  with $R(g)\geq -6$ and  $E(g)>2$.  Necessarily $M$ must contain a closed totally geodesic surface. 
	
Consider $(N,g_0)$ a compact hyperbolic $3$-manifold whose boundary $\partial N$ is a totally geodesic closed surface and let $M$ be the doubling of $N$. The doubling of $g_0$  (also denoted $g_0$) is hyperbolic and thus $(M,g_0)$ is a closed hyperbolic $3$-manifold .

 Consider $L=[0,
+\infty)\times \partial N$ with the hyperbolic metric $dr^2+\cosh^2 rg_{\partial N}$  and glue it to $N$ to obtain a complete hyperbolic $3$-manifold $(F,g_1)$ with infinite volume and one Fuchsian end. In Section 5 of \cite{thurston} we see that we can deform the hyperbolic metric on $F$ to obtain a new complete hyperbolic metric $g_2$ on $F$. By minimizing area for $g_2$ in the homotopy class of $\partial N\subset F$ we find a compact region $N_F\subset F$ whose boundary is a minimal embedded surface homotopic to $\partial N$. If the boundary of  $N_F$ were totally geodesic, then $N_F$ would be isometric to $N$ and thus $g_1$ isometric to $g_2$. Hence ${\rm area}_{g_2}(\partial N_F)<{\rm area}_{g_0}(\partial N)$.

The doubling of $N_F$  is a smooth closed $3$-manifold diffeomorphic to $M$. Hence  the doubling of $g_2$ gives a piecewise smooth continuous metric on  $M$ which is hyperbolic where it is smooth. Using Theorem 4.2 of \cite{agol-storm-thurston} we find a metric $g_{\delta}$ on $M$ with scalar curvature $R(g_\delta)\geq -6$ and arbitrarily close in $C^0$ to $g_2$. If $\Pi$ denotes the homotopy class induced by the totally geodesic surface $\partial N\subset M$  we have $\partial N_F\in \Pi$ and  
	so $$\lim_{\delta\to 0}{\rm area}_{g_\delta}(\Pi)\leq {\rm area}_{g_2}(\partial N_F)<{\rm area}_{g_0}(\partial N)={\rm area}_{g_0}(\Pi).$$
From Proposition 2.1	 in \cite{marques-neves-conf} we have that $E(g_\delta)\geq 2  {\rm area}_{g_0}(\Pi)/ {\rm area}_{g_2}(\Pi)$ and so $E(g_\delta)>2$ for all $
\delta$ very small.}

	We  note that Agol, Storm, and Thurston \cite{agol-storm-thurston} conjectured that $E_{vol}(g)$ is maximized by the hyperbolic metric among all metrics with $R(g)\geq -6$.

	The next theorem relates the average area ratio and the minimal surface entropy in a sharp way.
	
	\begin{thm}\label{rel.thm} For every Riemannian metric  $(M,g)$  we have $${\rm Area}_{{\rm Id}}(g/g_0) {E_{\bar\mu}(g)}\geq 2$$ and  equality holds if and only if $g=cg_0$ for some constant $c>0$.
	\end{thm}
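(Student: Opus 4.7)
The plan is to use the $g_0$-minimal representative of each class $\Pi\in S_\varepsilon(M)$ as a competitor surface in $(M,g)$, and then translate the counting from the $g_0$-side into a counting on the $g$-side. Given $\Pi\in S_\varepsilon(M)$ let $\Sigma_\Pi$ denote a minimal surface in $(M,g_0)$ representing $\Pi$; I would write
\[
\text{area}_g(\Pi)\;\leq\;\text{area}_g(\Sigma_\Pi)\;=\;\int_{\Sigma_\Pi}F(x,T_x\Sigma_\Pi)\,dA_{g_0}(x),
\]
where $F:Gr_2(M)\to[0,\infty)$ is the pointwise area-ratio density defined by $F(x,P)=|v_1\wedge v_2|_g/|v_1\wedge v_2|_{g_0}$ for any basis $\{v_1,v_2\}$ of $P$. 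Unpacking the definitions, the average of $F$ against the canonical $\text{PSL}(2,\R)$-invariant transverse measure on the totally geodesic foliation $\mathcal L_0$ is precisely $A:={\rm Area}_{\rm Id}(g/g_0)$.

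The main ingredient is an equidistribution statement: as the genus of $\Pi\in S_\varepsilon(M)$ tends to infinity, the normalized Gauss-map pushforward $(\iota_{\Sigma_\Pi})_*dA_{g_0}/\text{area}_{g_0}(\Sigma_\Pi)$ converges weak-$*$ in $Gr_2(M)$, up to an error that vanishes with $\varepsilon$, to the $\mathcal L_0$ measure. For Fuchsian surfaces this is the Ratner--Shah rigidity of $\text{PSL}(2,\R)$-orbits in the frame bundle, and the near-Fuchsian case follows from the fact that the tangent-plane distribution of a $(1+\varepsilon)$-quasi-Fuchsian minimal surface is $\varepsilon$-close to being $\text{PSL}(2,\R)$-invariant. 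I expect this to be the main technical obstacle, and it will rely on the same rigidity machinery used in \cite{cal-marques-neves}. Integrating $F$ against these measures then gives
\[
\text{area}_g(\Sigma_\Pi)\;\leq\;(A+\eta)\,\text{area}_{g_0}(\Sigma_\Pi),
\]
with $\eta\to 0$ as the genus grows and $\varepsilon\to 0$.

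Since $\Sigma_\Pi$ is $g_0$-minimal one has $\text{area}_{g_0}(\Sigma_\Pi)=\text{area}_{g_0}(\Pi)$, so every class with $\text{area}_{g_0}(\Pi)\leq 4\pi(L'-1)$ will also satisfy $\text{area}_g(\Pi)\leq 4\pi(A+\eta)(L'-1)$, up to a low-genus remainder that is negligible in the $L\to\infty$ counting. Setting $L-1=(A+\eta)(L'-1)$, dividing by $L\ln L$, and applying $E(g_0)=2$ yields, in the iterated limit,
\[
E(g)\;\geq\;\lim_{L'\to\infty}\frac{\ln\#\{\text{area}_{g_0}(\Pi)\leq 4\pi(L'-1)\}}{(A+\eta)L'\ln((A+\eta)L')}\;=\;\frac{2}{A+\eta},
\]
and letting $\eta\to 0$ gives $E(g)\cdot A\geq 2$.

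For the equality case, if $E(g)A=2$ both of the asymptotic inequalities above must be sharp, forcing $F(x,P)\equiv A$ on the $\text{PSL}(2,\R)$-invariant support of the limit measure; this support is dense in $Gr_2(M)$ by Ratner--Shah, so by continuity $F\equiv A$ on all of $Gr_2(M)$. Diagonalizing $g$ in a $g_0$-orthonormal basis at any fixed point and imposing $\sqrt{\lambda_i\lambda_j}=A$ for every pair $i\neq j$ forces $\lambda_1=\lambda_2=\lambda_3=A$, hence $g=Ag_0$ pointwise and $g=cg_0$ with $c=A$. The converse $E(cg_0)\cdot{\rm Area}_{\rm Id}(cg_0/g_0)=(2/c)\cdot c=2$ is an immediate rescaling check.
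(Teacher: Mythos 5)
There is a genuine gap in the equidistribution claim that underlies your whole inequality argument. You assert that, as the genus of $\Pi\in S_{\varepsilon}(M)$ tends to infinity (and $\varepsilon\to 0$), the normalized Gauss-map pushforward of the $g_0$-minimal representative $\Sigma_\Pi$ converges weak-$*$ to the canonical $\text{PSL}(2,\R)$-invariant measure on $Gr_2(M)$. This is false for arbitrary sequences: Ratner--Shah classifies the possible $\text{PSL}(2,\R)$-invariant limit measures, and one of the options is the Lebesgue measure on $F(S)$ for a \emph{closed totally geodesic surface} $S\subset M$. Taking $\Pi_m$ to be degree-$m$ covers of a fixed such $S$ produces a sequence of arbitrarily high genus in $S_0(M)$ whose pushforward measures concentrate on $F(S)$ rather than equidistribute. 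For a metric $g$ with $\mu_{F(S)}(F)>A$, your intermediate bound ${\rm area}_g(\Sigma_\Pi)\leq(A+\eta)\,{\rm area}_{g_0}(\Sigma_\Pi)$ then fails for these $\Pi$. Your phrase ``low-genus remainder'' does not address this, since the offending $\Pi$'s have unbounded genus. The paper sidesteps the issue by going in the opposite direction: it \emph{constructs} a single connected equidistributing sequence $\phi_m$ (Proposition 6.1, sharpening Labourie), shows ${\rm Area}_{\rm Id}(g/g_0)=\lim_m\frac{{\rm area}_g({\bf D}(\partial\phi_m))}{2\pi|\chi({\bf D}(\partial\phi_m))|}$ for that specific sequence (Proposition 6.2), and then counts only \emph{covers of that sequence} via M\"uller--Puchta to lower-bound $E(g)$. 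Since a cover of an equidistributing surface still equidistributes, this avoids ever needing an area bound for all $\Pi$.

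Your rigidity argument is also under-justified at the step where equality is said to force $F\equiv A$ on the support of the limit measure. Equality of the averages does not by itself pin down $F$ pointwise along the equidistributing sequence, and you have not explained why the asymptotic inequality being tight forces the integrand to be constant. The paper instead shows that equality in ${\rm Area}_{\rm Id}(g/g_0)E(g)=2$ forces $\lim_m {\rm area}_g({\bf D}(\partial\phi_m))/{\rm area}_g(\Pi_m)=1$, uses a short-time mean curvature flow comparison to deduce $\frac{1}{{\rm area}_g}\int_{{\bf D}(\partial\phi_m)}|H_g|^2dA_g\to 0$, applies the CMN limiting theorem to conclude that every totally geodesic disc in $\H^3$ is minimal for $g$ (a Zoll-type condition), and only then invokes Hangan's classification (in the Beltrami--Klein model) to conclude $g=cg_0$. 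Your final algebraic step (diagonalize and impose $\sqrt{\lambda_i\lambda_j}=A$) is correct \emph{given} $F\equiv A$, but the reduction to $F\equiv A$ is not established.
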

	Appealing to the previous interpretation of ${\rm Area}_{{\rm Id}}(g/g_0)$, we can restate Theorem \ref{rel.thm} as  saying that for every metric $g$
	$$vol_{g}(\mathcal L_0) E_{\bar\mu}(g)\geq 2 vol_{g_0}(\mathcal L_0) $$
	with equality if and only if  $g=cg_0$ for some $c>0$.
	This is reminiscent of Besson, Courtois, and Gallot \cite{BCG} which says that 
	$$vol_g(M)^{1/3}E_{vol}(g)\geq 2vol_{g_0}(M)^{1/3}$$
	for every metric $g$ with equality if and only if $g$ is isometric to $g_0$.   %It is a  conjecture of Calegari-Marques-Neves \cite{cal-marques-neves2} that $$vol_g(M)^{2/3}E_{\bar\mu}(g)\geq 2vol_{g_0}(M)^{2/3}$$ for every metric $g$ with equality occurring only for $g_0$ (up to isometry).

	Combining Theorem \ref{mse.thm} and Theorem \ref{rel.thm} we have the following corollary.
	\begin{cor}\label{aar.thm} Consider $T:M\rightarrow N$ a local diffeomorphism with degree $d$. If $(M,g)$ has scalar curvature $R(g)\geq -6$ then
		$${\rm Area}_T(g/h_0)\geq d$$
		{and equality holds if and only if $T$ is a local isometry between $g$ and $h_0$.}
	\end{cor}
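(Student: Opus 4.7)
The plan is to reduce the statement to the $T = \mathrm{Id}$ case handled by Theorem \ref{rel.thm}, and then to invoke Theorem \ref{mse.thm}. Since $T: M \to N$ is a local diffeomorphism between closed $3$-manifolds, $T$ is automatically a covering map of $d$ sheets. Write $\tilde h_0 := T^* h_0$; this is a hyperbolic metric on $M$, and $T: (M, \tilde h_0) \to (N, h_0)$ is a Riemannian $d$-fold cover.

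The first and main step is to show
$$\mathrm{Area}_T(g/h_0) = d \cdot \mathrm{Area}_{\mathrm{Id}}(g/\tilde h_0).$$
For $(x, P) \in Gr_2(N)$ with transversal disc $P_\delta$ as in the definition of $|\wedge^2 T^{-1}|$, the preimage $T^{-1}(P_\delta)$ splits into $d$ small discs $D_1, \dots, D_d$ meeting $T^{-1}(x) = \{y_1, \dots, y_d\}$, each $D_i$ tangent at $y_i$ to $Q_i := (dT_{y_i})^{-1}(P)$. Because $T$ is a local isometry from $\tilde h_0$ to $h_0$, each $D_i$ has $\tilde h_0$-area exactly $\delta$, so
$$|\wedge^2 T^{-1}|(x, P) = \sum_{i=1}^{d} \sqrt{\det g|_{Q_i}/\det \tilde h_0|_{Q_i}}.$$
Integrating over $Gr_2(N)$ and invoking the covering-space change-of-variables formula, combined with $\mathrm{vol}_{\tilde h_0}(Gr_2(M)) = d \cdot \mathrm{vol}_{h_0}(Gr_2(N))$, produces the identity.

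Next, by Mostow rigidity $\tilde h_0$ is isometric to $g_0$, and since both $E(\cdot)$ and the identity-map average area ratio are invariant under Riemannian isometry, Theorem \ref{rel.thm} applies with $\tilde h_0$ in place of $g_0$, giving
$$\mathrm{Area}_{\mathrm{Id}}(g/\tilde h_0) \cdot E(g) \geq 2.$$
Combined with $E(g) \leq 2$ from Theorem \ref{mse.thm} (which uses $R(g) \geq -6$), this forces $\mathrm{Area}_{\mathrm{Id}}(g/\tilde h_0) \geq 1$, and hence $\mathrm{Area}_T(g/h_0) \geq d$.

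For the equality case, $\mathrm{Area}_T(g/h_0) = d$ forces equality in both theorems. Theorem \ref{rel.thm} yields $g = c \tilde h_0$ for some $c > 0$, and the scaling relation $E(c \tilde h_0) = E(\tilde h_0)/c = 2/c$ combined with $E(g) = 2$ pins down $c = 1$. Therefore $g = T^* h_0$, i.e., $T$ is a local isometry between $(M, g)$ and $(N, h_0)$. The only nontrivial step is the change-of-variables identity in the first display; the rest is a formal consequence of the two main theorems and Mostow rigidity.
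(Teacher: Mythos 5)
Your proposal is correct and follows essentially the same route as the paper: reduce to the identity map via the covering identity $\mathrm{Area}_T(g/h_0) = d\,\mathrm{Area}_{\mathrm{Id}}(g/T^*h_0)$, then combine Theorem \ref{rel.thm} and Theorem \ref{mse.thm}. You spell out two points the paper leaves implicit — the change-of-variables derivation of the covering identity, and the use of the scaling law $E(c\tilde h_0)=2/c$ to rule out $c\neq 1$ in the equality case — but these are elaborations of the same argument, not a different approach.
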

	This result confirms Gromov's conjecture for local diffeomorphisms. If we just assume that $T$ has degree $d$ there may exist no local isometry between $M$ and $N$ and thus no obvious optimal map. 
	\begin{proof}[Proof of Corollary \ref{aar.thm}]
		We have 
		$${\rm Area}_{T}(g/h_0)=d\,{\rm Area}_{{\rm Id}}(g/T^*(h_0)).$$
		Hence, with $g_0:=T^*(h_0)$, we have from Theorem \ref{rel.thm} and Theorem \ref{mse.thm} that
		$$2{\rm Area}_{T}(g/h_0)\geq {E_{\bar\mu}(g)}{\rm Area}_{T}(g/h_0)=d {E_{\bar\mu}(g)}{\rm Area}_{{\rm Id}}(g/g_0)\geq 2d$$
		and equality holds if and only if $g=g_0=T^*(h_0)$.
	\end{proof}
	%The second corollary is below.
	
	%\begin{cor} Suppose $\Sigma\subset M$ is a totally geodesic closed immersion with respect to the hyperbolic metric $g_0$. If $(M,g)$ has scalar curvature $R(g)\geq -6$  then, with $\Pi$ denoting the homotopy class of $\Sigma$,
	%	$${\rm area}_g(\Pi)\geq -2\pi\chi(\Sigma)$$
	%	with equality if and only  if $g$ is an hyperbolic metric.
	%\end{cor}
	%\begin{proof}
	%	This follows from the proof of  Theorem \ref{mse.thm}, namely Theorem \ref{asympt.thm2} applied with $\Pi_m=\Pi$ for all $m\in\N$.
	%\end{proof}
	%Without the condition that $\Pi$ contains a totally geodesic immersion in the hyperbolic metric the result fails already for $g=g_0$.
	
	\subsection{Sketch of proofs} We describe succinctly the main ideas behind the proofs of Theorem \ref{mse.thm} and Theorem \ref{rel.thm}. 
	
	Assume that $(M,g)$ has $R(g)\geq -6$.  To prove Theorem \ref{mse.thm} we need to show that for every sequence of homotopy classes $\Pi_m\in S_{1/m,\bar \mu}(M)$ we have
	$$\lim_{m\to\infty}{{\rm area}_g(\Pi_m)}/{{\rm area}_{g_0}(\Pi_m)}\geq 1.$$
	Let $(g_t)_{t\geq 0}$ be a solution to normalized Ricci flow, which we first assume exists for all time and converges to $g_0$.  We show that if the inequality above fails then for all $m$ sufficiently large we have  for some $\delta>0$ and all $t\geq 0$
	$$ {{\rm area}_{g_t}(\Pi_m)}/{{\rm area}_{g_0}(\Pi_m)}\leq 1-\delta e^{-t}.$$
	Stability analysis shows that $g_t\simeq g_0+e^{-t}\bar h$ as $t\to\infty$, where $\bar h$ is an eigentensor {for the linearization of the trace-free Ricci tensor at the hyperbolic metric}. Letting $m\to\infty$ we obtain %${\rm PSL}(2,\R)$-invariant 
	{measures} $\mu_t$ on the frame bundle of $M$ so that $$\mu_t(1)=\lim_{m\to\infty } {{\rm area}_{g_t}(\Pi_m)}/{{\rm area}_{g_0}(\Pi_m)}.$$ Necessarily $\mu_t(1)\leq 1-\delta e^{-t}$. Using a form of Gauss equation (see \eqref{identity})  for these measures {we have an identity of the type
		$$1=\mu_t(1)+\mu_t(\mbox{curvature terms}).$$
		Combining with the  asymptotics $g_t\simeq g_0+e^{-t}\bar h$} we show
	$$1=\mu_t(1)+\mu_t(\mbox{curvature terms})=\mu_t(1)+e^{-t}\mu_{+\infty}(\mbox{terms with }\bar h)+o(e^{-t}).$$
	{The measure $\mu_{+\infty}$ {coincides with the homogeneous Liouville measure $\bar \mu$ and from the fact that $\bar h$ is a trace free eigentensor,}  we show that  $$\mu_{+\infty}(\mbox{terms with }\bar h)=0.$$ Thus $1= \mu_t(1)+o(e^{-t})$, which contradicts $\mu_t(1)\leq 1-\delta e^{-t}$.
	
	For the general  case, we proceed as above but use Perelman's Ricci flow with surgery \cite{perelman}. Because $M$ has an hyperbolic metric all surgeries correspond to removing capped horns and replacing them by standard caps. The key point to check is that essential  surfaces minimizing area stay away from the capped horns. We achieve this via area comparison.
	
	We now sketch the argument behind Theorem \ref{rel.thm}. We start by improving a construction of Labourie in \cite{Labourie} and find  a sequence $\Sigma_m\subset M$ of connected immersed minimal surfaces with respect to $g_0$ which becomes equidistributed in the frame bundle.
	We show in Proposition \ref{compute}  that $${\rm Area}_{{\rm Id}}(g/g_0)=\lim_{m\to\infty}{\rm area}_g(\Sigma_m)/{\rm area}_{g_0}(\Sigma_m).$$
	A counting argument implies that $$E_{\bar\mu}(g)\geq E(g_0)\lim_{m\to\infty}{\rm area}_{g_0}(\Sigma_m)/{\rm area}_{g}(\Sigma_m)$$
	and these two expressions give that $E_{\bar\mu}(g){\rm Area}_{{\rm Id}}(g/g_0)\geq 2$. If equality holds we show first that the metric $g$ is Zoll, i.e., every totally geodesic disc with respect to $g_0$ is minimal with respect to $g$, and then we show that  $g$ must be a multiple of $g_0$. This proves Theorem \ref{rel.thm}.

	\subsection{Acknowledgments} 
	
	The first author thanks his advisor Fernando Cod\'a Marques for helpful conversations related to this paper and for his support. 
	
	The second author thanks Danny Calegari. We are also thankful to the referee for the suggestions which helped improved the presentation.

	\section{Preliminaries}\label{prelim}

	Assume $(M,g)$ is a closed Riemannian $3$-manifold admitting a hyperbolic metric $g_0$ and $N$  another closed hyperbolic $3$-manifold  $N$ with an hyperbolic metric $h_0$.

	Suppose $T:M\rightarrow N$ is  smooth map. {Given $(x,P)\in Gr_2(M)$ we denote by $|\wedge^2T|_g(x,P)$ the Jacobian of  $$dT_x:P\rightarrow T_{T(x)}{N},$$ meaning that if $e_1,e_2$ is an orthonormal basis of $P$  and $u_i:=dT_x(e_i),$ $i=1,2,$
		$$|\wedge^2T|_g(x,P):=\sqrt{h_0(u_1,u_1)h_0(u_2,u_2)-h_0^2(u_1,u_2)}.$$}
	Given $y\in N$ a regular value {and $\tau=(y,V)\in  Gr_2(N)$ set
		$$|\wedge^2T^{-1}|_g(y,V)=\sum_{x\in T^{-1}(y)}\frac{1}{|\wedge^2T|_g(x,(dT_x)^{-1}(V))}$$
		The function $\tau\mapsto|\wedge^2T^{-1}|_g(\tau)$} is defined almost everywhere on $Gr_2(N)$.
	
	For context, if $T$ is transverse to a closed surface $S\subset {N}$, then $\Sigma:=T^{-1}(S)$ is a surface of ${M}$  
	{with area} 
	\begin{equation*}\label{coarea}
		{\rm area}_g(\Sigma)=\int_{{S}}|\wedge^2T^{-1}|_g(y,T_yS)dA_{h_0}(y).
	\end{equation*}
	
	Gromov \cite[page 73]{gromov}  defined the {\em average area ratio of} $T$ as
	\begin{equation}\label{av.area.ratio}
		{\rm Area}_T(g/h_0):=\int_{Gr_2(N)}|\wedge^2T^{-1}|_g(\tau)d\mu_{h_0}(\tau),
	\end{equation}
	where integration is with respect to the {\em unit} volume measure $\mu_{h_0}$  on $Gr_2(N)$ induced by $h_0$. In particular, ${\rm Area}_{{\rm Id}}(h_0/h_0)=1$. {In the definition above it is implicitly assumed that $|\wedge^2T^{-1}|_g$ is in $L^1$. If that is not the case we define ${\rm Area}_T(g/h_0)=\infty$}.
	
	\medskip

	We use  $\pi_1(M)$ to denote as well its representation into $\text{PSL}(2,\C)$.\footnote{Using the Poincar\'e ball model, the orientation preserving isometries of $\H^3$ are identified with the automorphisms of $S^2$ which is the group $\text{PSL}(2,\C).$} A closed immersed surface $\Sigma\subset M$ is {\em essential} if the immersion $\iota:\Sigma \rightarrow M$ is $\pi_1$-injective. Essential surfaces lift to discs in $\H^3$. Using the representation of $\pi_1(M)$ into ${\rm PSL}(2,\C)$ we have $\iota_{*}(\pi_1(\Sigma))< \text{PSL}(2,\C)$ and so we can associate its limit set $\Lambda(\Sigma)\subset \partial_{\infty}\H^3\simeq S^2$ (for the definition see for instance \cite[Section 2.2]{cal-marques-neves}).  %Every other surface in the  homotopy class $\Pi$ of $\Sigma$ has the same limit set {up to the action of $\pi_1(M)$ on $S^2$ by conformal automorphisms,} and so we {choose any $\Sigma \in \Pi$ and} set $\Lambda(\Pi):=\Lambda(\Sigma)$.

	Set  $\mathcal C_\varepsilon$ to be  the space of $(1+\varepsilon)$-quasicircles in $\partial_{\infty}\H^3$ (see \cite[Definition 2.3]{cal-marques-neves} for precise definition). The group $\pi_1(M)$ acts on  $\mathcal C_\varepsilon$ and preserves $\mathcal C_0$ (which is the space of all geodesic circles in $S^2\simeq\partial_{\infty}\H^3$). 
	
	Let $S_{\varepsilon}(M)$ denote the set of all homotopy class $\Pi$ of essential surfaces with limit set in $\mathcal C_\varepsilon$. Recall that we defined
	$$\text{area}_{g}(\Pi):= {\text{inf}} \{\text{area}_{g}(S): S\in \Pi\}.$$
	From Schoen-Yau  \cite{schoen-yau} there is an immersed minimal surface (with respect to $g$) ${\bf \Sigma}_g(\Pi)\in\Pi$ which realizes $\text{area}_{g}(\Pi)$.

	From \cite{seppi}, assuming $\varepsilon$ is sufficiently small, for all $\gamma\in \mathcal C_\varepsilon$ there is a unique embedded area-minimizing   disc $D(\gamma)\subset \H^3$  (with respect to $g_0$) with $\partial_{\infty}D(\gamma)=\gamma\subset \partial_{\infty}\H^3$ and principal curvatures that can be made arbitrarily small by choosing $\varepsilon$ small enough. 
	
	The same argument as in Theorem 4.3 of \cite{lowe2} (adapted to the case {where the minimal discs}  are not necessarily preserved by some surface group),  shows that we can find a neighborhood $\mathcal U$ of $g_0$ and $\bar \varepsilon$ small so that for each $\gamma\in \mathcal C_{\bar \varepsilon}$ there is a unique non-degenerate area-minimizing disc $\Sigma_g(\gamma)$ with respect to the metric $g$ so that $\partial_{\infty}\Sigma_{g}(\gamma)=\gamma$. The discs $\Sigma_g(\gamma)$ and $D(\gamma)$ are at a bounded Hausdorff distance from each other (independent of $g$) and if $g\to g_0$ then $\Sigma_g(\gamma)$ converges to $D(\gamma)$ uniformly in $C^{2,\alpha}$.  {Therefore  we} can decrease $\mathcal U$ and $\bar \varepsilon$ so that for all $\gamma\in  \mathcal C_{\bar \varepsilon}$ there is $f_\gamma\in C^{\infty}(D(\gamma))$ (depending on $g$) such that its graph over $D(\gamma)$ is identical to $\Sigma_g(\gamma)$ and {$|f_{\gamma}|_{2,\alpha}<1$.} There is an ambiguity on the sign of $f_\gamma$ but the quantities we consider, like $|f_\gamma|$, will be sign independent. 
	
	{If $\Pi\in S_{\bar \varepsilon}(M)$ and $g\in \mathcal U$ then, with  {$\gamma:=\Lambda({\bf \Sigma}_g(\Pi))$},  uniqueness implies that $\Sigma_g(\gamma)$ covers a minimal surface ${\bf \Sigma}_g(\gamma)$ in $\Pi$ which must coincide with ${\bf \Sigma}_g(\Pi)$ and thus satisfy  $$\text{area}_g(\Pi)=\text{area}_g({\bf \Sigma}_g(\gamma)).$$}
	{Given $\gamma\in \mathcal C_{\varepsilon}$, let $n(\gamma)$ denote a  continuous unit normal vector field along $D(\gamma)$ with respect to $g_0$.} Consider  the diffeomorphism (using the hyperboloid model)
	\begin{equation}\label{mapF}
		F_{\gamma}: D(\gamma)\rightarrow \Sigma_{g}(\gamma),\quad x\mapsto \cosh(f_{\gamma}(x))x+\sinh(f_{\gamma}(x))n(\gamma)(x).
	\end{equation}
	We omit the dependence of $g$ on $F_{\gamma}$ to avoid too much notation. When $g=g_0$,  $f_{\gamma}=0$ and thus $F_{\gamma}$ is the identity.
	
	\section{Laminations and Laminar measures}\label{measures.labourie}
	
	We follow the presentation of Labourie in \cite{Labourie} and add some auxiliary results.
	\subsection{Laminations}
	Consider the space of {\em stable minimal conformal immersions} $\mathcal F(\H^3,\varepsilon)$ (with $\varepsilon\leq \bar\varepsilon$) defined in Definition 5.1 of \cite{Labourie}, i.e.,  the space of conformal minimal immersions $$\phi:\H^2\rightarrow (\H^3,g_0)\quad{\rm with}\,\, \partial\phi:=\phi(\partial_{\infty}\H^2)\in \mathcal C_{\varepsilon}.$$ Because $\varepsilon\leq \bar \varepsilon$, $D(\partial\phi)=\phi(\H^2)$, $\phi$ is an embedding and stable for the second variation of area. The topology we choose is the same as the one considered in Definition 5.1 of \cite{Labourie} and it makes the map $\phi\mapsto \partial\phi$ continuous (Theorem 5.2 of \cite{Labourie}). Thus $$\cap_{k\in\N}\mathcal F(\H^3,1/k)=\mathcal F(\H^3,0).$$

	Similar to \cite{Labourie}  we also consider $\mathcal F(M,\varepsilon):=\mathcal F(\H^3,\varepsilon)/ \pi_1(M)$ {with the quotient topology}. The group $\text{PSL}(2,\R)$ acts on $\H^2$ and thus it acts on $\mathcal F(M,\bar\varepsilon)$  in the following way:   
	\begin{equation}\label{action}
		\tau\in  \text{PSL}(2,\R), \quad R_\tau: \mathcal F(M,\varepsilon)\rightarrow  \mathcal F(M,\varepsilon)\quad  R_\tau(\phi):=\phi\circ \tau^{-1}.
	\end{equation}
	The space  $\mathcal F(M,\bar\varepsilon)$ together with the $\text{PSL}(2,\R)$-action is called the {\em conformal minimal laminations} of $M$.

	Fix a  fundamental domain $\Delta\subset \H^3$ of $M$. Given $\phi\in \mathcal F(M,\varepsilon)$, there is a unique lift to an element of $\mathcal F(\H^3,\varepsilon)$, denoted by $\phi$ as well, that is uniquely determined by the requirement that $\phi(i)\in \Delta$.   Thus for each $\phi \in \mathcal F(M,\varepsilon)$ we obtain $\partial \phi\in \mathcal C_{\varepsilon}$ but this map is not necessarily continuous. Theorem 5.2 (i) of \cite{Labourie} says that the evaluation map which sends $\phi\in \mathcal F(\H^3,\bar \varepsilon)$ to $\phi(i)\in \H^3$ is proper. As a result we deduce at once the lemma below.
	
	\begin{lemm}\label{laminations.compact} The space $\mathcal F(M,\bar \varepsilon)$ is sequentially compact.
	\end{lemm}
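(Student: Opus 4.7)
The plan is to deduce sequential compactness of $\mathcal F(M,\bar\varepsilon)$ directly from the properness of the evaluation map $\phi\mapsto\phi(i)$ on $\mathcal F(\H^3,\bar\varepsilon)$, as quoted from Theorem 5.2 (i) of \cite{Labourie}, together with the cocompactness of the $\pi_1(M)$-action on $\H^3$.

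Concretely, I would start with an arbitrary sequence $\phi_n\in \mathcal F(M,\bar\varepsilon)$ and use the preferred lift prescribed in the paragraph above the lemma: for each $n$ pick the unique representative $\tilde\phi_n\in \mathcal F(\H^3,\bar\varepsilon)$ with $\tilde\phi_n(i)\in\Delta$. Since $M$ is closed, the fundamental domain $\Delta$ has compact closure in $\H^3$, so the points $\tilde\phi_n(i)$ lie in the compact set $\overline{\Delta}$. By the properness assertion of Theorem 5.2 (i) of \cite{Labourie}, the preimage of $\overline{\Delta}$ under the evaluation map $\phi\mapsto\phi(i)$ is a compact subset of $\mathcal F(\H^3,\bar\varepsilon)$. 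Hence $\{\tilde\phi_n\}$ admits a convergent subsequence $\tilde\phi_{n_k}\to \tilde\phi_\infty$ in $\mathcal F(\H^3,\bar\varepsilon)$.

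To conclude, I would push this convergence down to the quotient: the projection $\mathcal F(\H^3,\bar\varepsilon)\to \mathcal F(M,\bar\varepsilon)=\mathcal F(\H^3,\bar\varepsilon)/\pi_1(M)$ is continuous by definition of the quotient topology, so $\phi_{n_k}\to \phi_\infty$ in $\mathcal F(M,\bar\varepsilon)$, where $\phi_\infty$ is the class of $\tilde\phi_\infty$. This produces the required convergent subsequence.

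There is no serious obstacle: the bulk of the work (the properness of the evaluation map, and the fact that $\phi\mapsto\partial\phi$ is continuous so that the limit really lies in $\mathcal F(\H^3,\bar\varepsilon)$ rather than in some completion) is already encoded in the cited Theorem 5.2 of \cite{Labourie}. The only point that deserves being stated explicitly is the use of cocompactness of $\pi_1(M)$ acting on $\H^3$, which is what makes the chosen lifts all land in a single compact set and thereby converts properness into sequential compactness in the quotient.
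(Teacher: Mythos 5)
Your argument is exactly the one the paper intends: the preferred lifts land in the compact closure of a fundamental domain, properness of the evaluation map (Theorem 5.2(i) of Labourie) yields a convergent subsequence upstairs, and continuity of the quotient projection carries it down. The paper states the lemma follows "at once" from that same paragraph, so your proof is correct and follows the same route.
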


	Given $\phi\in \mathcal F(M,\bar\varepsilon)$, let $C(\phi)> 0$ be the conformal factor of $\phi^*(g_0)$. Denote the Gaussian curvature of $D(\partial \phi)$ by $K(\phi)$. From Gauss equation {and \cite{seppi} we have, after making $\bar \varepsilon$ smaller if necessary, $-2\leq K(\phi)\leq -1$.} The maximum principle  applied to the equation satisfied by $C(\phi)$ implies that
	\begin{equation}\label{gauss.phi}
		\frac{1}{2}\leq \frac{1}{\sup_{D(\partial\phi)} |K(\phi)|}\leq C(\phi)\leq 1.
	\end{equation}

	Let $F(M)$ denote the frame bundle of $M$, i.e., $F(M)= \text{PSL}(2,\C)/ \pi_1(M)$. Fix $\{e_1,e_2\}$ an orthonormal basis of ${T_i}\H^2$ and, given $\phi\in  \mathcal F(M,\bar\varepsilon)$, set
	$$e_1(\phi):=d\phi_i(e_1)C(\phi)^{-1/2}\quad\text{and}\quad e_2(\phi):=d\phi_i(e_2)C(\phi)^{-1/2}.$$
	There  is a unique $n(\phi)\in T_{\phi(i)}M$ so that $\{e_1(\phi),e_2(\phi),n(\phi)\}$ is a positive frame.
	Consider the {continuous} map
	\begin{equation}\label{omega.map}
		\Omega: \mathcal F(M,\bar\varepsilon)\rightarrow F(M),\quad \Omega(\phi)=(\phi(i),\{e_1(\phi),e_2(\phi),n(\phi)\}).
	\end{equation}

	Given $\phi\in \mathcal F(M,\bar\varepsilon)$,  set $\gamma:=\partial \phi$ and denote the Jacobian of  (see \eqref{mapF})
	$$F_{\gamma}\circ \phi:\H^2\rightarrow (\Sigma_{g}(\gamma),g)$$ by $|Jac_g(F_{\gamma}\circ \phi)|$. Consider the  function
	\begin{equation}\label{jac.fcn}
		\Lambda_g: \mathcal F(M,\bar\varepsilon)\rightarrow \R, \quad \Lambda_g(\phi):=|Jac_g(F_{\gamma}\circ \phi)|(i).
	\end{equation}
	If $dA_g$ is the area element of $\Sigma_{g}(\gamma)$, then $(F_{\gamma}\circ\phi)_i^*(dA_g)= \Lambda_g(\phi)dx\wedge dy$, in isothermal coordinates. This function is continuous because it is independent of the particular lift of $\phi$ that was chosen.

	With $\gamma\in\mathcal C_{\bar \varepsilon}$,  let  $\nu_g(\gamma)$ and  $|A|_g^2(\Sigma_{g}(\gamma))$ denote respectively a  continuous unit normal vector field along $\Sigma_g(\gamma)$ with respect to $g$ and the norm square of the second fundamental form of $\Sigma_{g}(\gamma)$ with respect to the metric $g$. 
	Consider the following functions 
	\begin{equation}\label{fA}
		|A|_g^2:\mathcal F(M,\varepsilon)\rightarrow \R, \quad \phi\mapsto |A|_g^2(\Sigma_{g}(\partial\phi))(F_{\partial \phi}\circ\phi(i)),
	\end{equation}
	\begin{equation}\label{fRic}
		Ric(g)(\nu,\nu):\mathcal F(M,\varepsilon)\rightarrow \R, \quad \phi\mapsto Ric(g)_{|F_{\partial \phi}\circ\phi(i)}(\nu_g(\partial\phi),\nu_g(\partial\phi)),
	\end{equation}
	\begin{equation}\label{fR}
		R(g):\mathcal F(M,\varepsilon)\rightarrow \R, \quad \phi\mapsto R(g)(F_{\partial \phi}\circ\phi(i)).
	\end{equation}
	The definition of all these functions is independent of the particular lift of $\phi\in\mathcal F(M,\varepsilon)$ that was chosen and thus they are  continuous.

	\subsection{Laminar measures} A {\em laminar measure} $\mu$ on $\mathcal F(M,\bar\varepsilon)$ is a probability measure that is invariant under the $\text{PSL}(2,\R)$-action given by \eqref{action}.

	A laminar measure $\mu$ and the map $\Omega$ defined in \eqref{omega.map} induces a probability measure $\Omega_*\mu$ on $F(M)$. That measure is invariant under a $\text{PSL}(2,\R)$-action which will not coincide in general with the homogeneous action of $\text{PSL}(2,\R)$ as a subgroup of $\text{PSL}(2,\C)$. Another issue that needs to be addressed is the fact that the space of laminar measures is not necessarily weakly compact (a related problem is put as an open question in \cite{Labourie2}).

	\begin{lemm}\label{compactness.lemm} Let $\mu_k$ be a sequence of   laminar measures on $\mathcal F(M,1/k)$ so that $\Omega_*\mu_k$ converges weakly to a probability measure $\bar\mu$ on $F(M)$.  Then $\bar \mu$  is  invariant under  the homogeneous action of $\text{PSL}(2,\R)$. 
	\end{lemm}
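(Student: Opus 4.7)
My plan exploits the fact that $\Omega$ is exactly $\text{PSL}(2,\R)$-equivariant on the ``geodesic locus'' $\mathcal F(M,0)$, with the homogeneous action on the target, and uses this to reduce invariance of $\bar\mu$ under the homogeneous action to the laminar invariance of each $\mu_k$ via a uniform approximation as $\varepsilon=1/k\to 0$.

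The first step is to check the boundary equivariance: for any $\phi\in\mathcal F(M,0)$ and any $\tau\in\text{PSL}(2,\R)$,
$$\Omega(R_\tau\phi)=\tau\cdot\Omega(\phi),$$
where $\tau\cdot$ denotes the homogeneous action on $F(M)=\text{PSL}(2,\C)/\pi_1(M)$. Geometrically, both sides are obtained by transporting the frame attached at $\phi(i)$ along the totally geodesic plane $\phi(\H^2)\subset\H^3$ via the isometry of $\H^3$ that extends $\tau^{-1}\in\mathrm{Isom}(\H^2)$; this extension sits inside the copy $\text{PSL}(2,\R)\subset\text{PSL}(2,\C)$ stabilising the plane, which is precisely the one realising the homogeneous action.

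The heart of the argument is to upgrade this to a uniform statement. For fixed $\tau\in\text{PSL}(2,\R)$ and continuous $f:F(M)\to\R$ I would show
$$\eta_k:=\sup_{\phi\in\mathcal F(M,1/k)}\bigl|f(\Omega(R_\tau\phi))-f(\tau\cdot\Omega(\phi))\bigr|\,\longrightarrow\,0.$$
Otherwise, a sequence $\phi_j\in\mathcal F(M,1/k_j)$ with $k_j\to\infty$ violates this; Lemma~\ref{laminations.compact} combined with $\bigcap_k\mathcal F(M,1/k)=\mathcal F(M,0)$ extracts a subsequential limit $\phi_\infty\in\mathcal F(M,0)$, and continuity of $\Omega$ (see \eqref{omega.map}), of the laminar action $R_\tau$, and of the homogeneous action on $F(M)$, together with the boundary equivariance, contradict the assumed gap.

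With $\eta_k\to 0$ in hand, the conclusion is automatic: using $R_\tau$-invariance of $\mu_k$ and that each $\mu_k$ is a probability measure,
\begin{align*}
\int_{F(M)} f\circ\tau\,d(\Omega_*\mu_k)
&=\int f\circ\tau\circ\Omega\,d\mu_k=\int f\circ\Omega\circ R_\tau\,d\mu_k+O(\eta_k)\\
&=\int f\circ\Omega\,d\mu_k+O(\eta_k)=\int_{F(M)} f\,d(\Omega_*\mu_k)+O(\eta_k).
\end{align*}
Letting $k\to\infty$ and applying weak convergence $\Omega_*\mu_k\rightharpoonup\bar\mu$ to $f$ and $f\circ\tau$ yields $\int f\circ\tau\,d\bar\mu=\int f\,d\bar\mu$, and varying $f$ and $\tau$ completes the argument. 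The main obstacle is the uniform approximation step; in particular, one must check that $R_\tau$ really is continuous in Labourie's topology on $\mathcal F(M,\bar\varepsilon)$ after normalising lifts by $\phi(i)\in\Delta$, since composing with $\tau^{-1}$ can push $\phi(i)$ out of the fundamental domain and force a $\pi_1(M)$-correction that, however, is absorbed downstairs in $F(M)$.
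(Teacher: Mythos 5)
Your proposal is correct and follows essentially the same approach as the paper: the paper introduces the projection $P:=\Omega_0^{-1}\circ\Omega$ onto $\mathcal F(M,0)$ and compares $\eta:=f\circ\Omega\circ R_\tau$ with $\eta\circ P$ (which is exactly $f(\tau\cdot\Omega(\phi))$), then proves the uniform estimate by the same contradiction-plus-compactness argument via Lemma~\ref{laminations.compact}; your $\eta_k$ is precisely $\sup|\eta-\eta\circ P|$ on $\mathcal F(M,1/k)$, and your final approximation chain matches the paper's computation after unwinding $P$.
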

	\begin{proof}
		Let $\Omega_0:\mathcal F(M,0)\rightarrow F(M)$ be the restriction of $\Omega$ to $\mathcal F(M,0)$. Every  $\phi \in \mathcal F(M,0)$ has the property that $D(\partial\phi)$ is a totally geodesic disc and so $\phi:\H^2\rightarrow M$ is an isometric immersion. Thus $\phi$ is uniquely determined by $\phi(i), e_1(\phi),$ and $e_2(\phi)$. Hence $\Omega_0$ is bijective and a homeomorphism.
		
		Recall the $ \text{PSL}(2,\R)$-action on  $\mathcal F(M,\bar\varepsilon)$ defined in \eqref{action}. $\Omega_0$ induces a $\text{PSL}(2,\R)$-action on $F(M)$ in the following way: 
		$$\tau\in \text{PSL}(2,\R), \quad \bar R_\tau: F(M)\rightarrow F(M), \quad \bar R_\tau(x)= \Omega_0(R_{\tau}\circ \Omega_0^{-1}(x)).$$
		This action corresponds to the homogeneous action of $\text{PSL}(2,\R)$. Thus, given $f\in C^0(F(M))$, we need to check that  $$\bar \mu(f\circ \bar R_\tau)=\bar \mu(f)\quad\mbox{for all }\tau\in\text{PSL}(2,\R).$$
		Consider the ``projection'' of $\mathcal F(M,\varepsilon)$ onto $\mathcal F(M,0)$ given by $$P:=\Omega_0^{-1}\circ\Omega:\mathcal F(M,\varepsilon)\rightarrow \mathcal F(M,0).$$  
		Set $\eta:=f\circ \Omega\circ R_\tau$. We have $f\circ \bar R_\tau\circ\Omega=\eta\circ P$ and thus
		\begin{equation*}%\label{formula1}
			{\bar\mu}(f\circ \bar R_\tau)=\lim_{k\to\infty}\Omega_*\mu_k(f\circ \bar R_\tau)=\lim_{k\to\infty}\mu_k(\eta\circ P).
		\end{equation*}
		We also have $\Omega_*\mu_k(f)=\mu_k(\eta)$ for all $k\in\N$ by  $\text{PSL}(2,\R)$-invariance and thus
		\begin{equation*}%\label{formula2}
			{\bar\mu} (f)=\lim_{k\to\infty}\Omega_*\mu_k(f)=\lim_{k\to\infty}\mu_k(\eta).
		\end{equation*}
		In light of these last two identities it suffices to check that
		$$\lim_{k\to \infty}|\mu_k(\eta\circ P)-\mu_k(\eta)|=0$$
		{and this follows at once if we show that}
		$$\lim_{k\to \infty}\sup_{\phi\in  \mathcal F(M,1/k)}|\eta\circ P(\phi)-\eta(\phi)|=0.$$
		{If this identity does not hold we find $\delta>0$ and $\phi_k\in  \mathcal F(M,1/k)$ so that $|\eta\circ P(\phi_k)-\eta(\phi_k)|\geq \delta$ for all $k\in\N$.
			From Lemma \ref{laminations.compact} we know that, after passing to a subsequence, $\phi_k$ converges to some $\phi\in  \mathcal F(M,0)$ which must satisfy $|\eta\circ P(\phi)-\eta(\phi)|\geq \delta$. This is impossible because $P(\phi)=\phi$.}
	\end{proof}

	Let $\Gamma$ be a Fuchsian subgroup of $\text{PSL}(2,\R)$  so that $\H^2/ \Gamma$ is a  closed hyperbolic surface with genus $l$. All Fuchsian groups we consider will have this property with no need for further mentioning.
	
	$\text{PSL}(2,\R)/ \Gamma$ is the frame bundle of $\H^2/ \Gamma$. With respect to the invariant metric on $\text{PSL}(2,\R)/ \Gamma$ we have $vol(\text{PSL}(2,\R)/ \Gamma)=\alpha_04\pi(l-1)$ for some universal constant $\alpha_0$.
	
	Suppose $\phi \in\mathcal F(M,\bar\varepsilon)$ is equivariant with respect to a representation of $\Gamma< \text{PSL}(2,\R)$ in $\pi_1(M)<\text{PSL}(2,\C)$. Consider $U\subset \text{PSL}(2,\R)$ a fundamental domain of $\text{PSL}(2,\R)/ \Gamma$. Following Proposition 5.5 of \cite{Labourie} we define $\delta_{\phi}$ a laminar measure on $\mathcal F(M,\bar\varepsilon)$
	$$\delta_\phi(f):=\frac{1}{vol(U)}\int_U f(\phi\circ \tau)d\bar\nu(\tau),\quad f\in C^0(\mathcal F(M,\bar\varepsilon))$$
	where $\bar\nu$ is the bi-invariant measure on $\text{PSL}(2,\R)$.
	
	Consider the covering map $\pi:\H^3\rightarrow M$. Equivariance implies that $D(\partial \phi)$ projects to a closed surface  ${\bf D(\partial \phi)}$ in $M$ and that $\phi$ descends to an immersion from $\H^2/ \Gamma$ to ${\bf D(\partial \phi)}$ that we still denote by ${\phi}$. The uniqueness property implies that $\Sigma_{g}(\partial \phi)$ projects to a closed surface ${\bf \Sigma}_{g}(\partial \phi)$ on $M$ that is homotopic to ${\bf D(\partial \phi)}$. The map  $F_{\partial \phi}\circ \phi $ is also equivariant and so descends to a map from 
	$\H^2/ \Gamma$ to ${\bf \Sigma}_{{g}}(\partial \phi)$ that we denote by $\bf{F_{\partial \phi}}\circ {\phi}$ \footnote{The maps $\bf{F_{\partial \phi}}\circ {\phi}$ and ${\pi\circ F_{\partial \phi}}\circ\phi$ have the same image but different domains.}.
	The unit normal vector field $\nu_g(\gamma)$ induces a unit normal vector field along  ${\bf \Sigma}_g(\gamma)$ that we also denote by $\nu_g(\gamma)$.
	
	{Given $\Pi\in S_{\bar \varepsilon}(M)$ there is $\phi\in \mathcal F(M,\bar\varepsilon)$ equivariant with respect to some Fuchsian subgroup of $\text{PSL}(2,\R)$ so that ${\bf D(\partial \phi)}\in \Pi$. The laminar measure $\delta_\phi$ depends only on $\Pi$ and so we denote it by $\delta_\Pi$. Indeed if $\phi'\in \mathcal F(M,\bar\varepsilon)$ is such that ${\bf D(\partial \phi')}\in \Pi$ we have ${\bf D(\partial \phi')}={\bf D(\partial \phi)}$ and so $\phi'\circ \phi^{-1}\in\text{PSL}(2,\R)$, which implies that $\delta_\phi=\delta_{\phi'}$. 
	
	Given $\Pi\in S_{\bar \varepsilon}(M)$ we consider the  unit measure on $F(M)$ given as 
	\begin{equation}\label{pi.measure}
	\mu_{\Pi}=\Omega_*\delta_\phi.
	\end{equation}
	}

	For context, suppose $f$ is a continuous function in $Gr_2(M)$ and set
	$$\hat f:\mathcal F(M,\bar\varepsilon)\rightarrow \R,\quad \phi\mapsto f({\pi\circ F_{\partial \phi}}\circ\phi(i),d({\pi\circ F_{\partial \phi}})_{|\phi(i)}(d\phi_{|i}({T_i}\H^2))).$$
	The function $\hat f$ is continuous. With $dA_{hyp}$ denoting the hyperbolic volume form on $\H^2$ we have
	\begin{multline}%\label{change.variable}
		\int_{{\bf \Sigma}_{g}(\partial \phi)}f(x,T_x{\bf \Sigma}_{g}(\partial \phi))dA_g(x)\\
		=\int_{\H^2/ \Gamma} f({\bf F_{\partial \phi}}\circ\phi(z),d{\bf F_{\partial \phi}}_{|\phi(z)}(d_z\phi({T_z}\H^2))|Jac_g ({\bf F_{\partial \phi}\circ \phi})|(z)dA_{hyp}(z)\\
		=\frac{1}{\alpha_0}\int_U\hat f(\phi\circ \tau)\Lambda_g(\phi\circ \tau) d\bar \nu(\tau)=4\pi(l-1)\delta_{\phi}(\hat f \Lambda_g).
	\end{multline}
	
	\subsection*{Gauss identity for laminar measures}
	From Gauss equation we have 
	\begin{multline*}
		4\pi(l-1)=\text{area}_g({\bf \Sigma}_{g}(\partial \phi))\\
		+\int_{{\bf \Sigma}_{g}(\partial \phi)}Ric(g)({\nu}_g({\partial\phi}),{\nu}_g({\partial\phi})) -\frac{1}{3}R(g)+\frac{|A|^2}{2}-\frac{R(g)+6}{6}dA_g.
	\end{multline*}
	{When interpreted in terms of laminar measures this identity becomes}
	\begin{multline}\label{identity}
		1=\frac{\text{area}_g({\bf \Sigma}_{g}(\partial \phi))}{4\pi(l-1)}+\delta_{\phi}\left(\left[Ric(g)(\nu,\nu)-{R(g)/3}+{|A|^2_g}/{2}\right]\Lambda_g\right)\\
		-\delta_{\phi}\left(\frac{R(g)+6}{6}\Lambda_g\right),
	\end{multline}
	where the functions $|A|^2_g$, $Ric(g)(\nu,\nu)$, and $R(g)$ are as defined in \eqref{fA}, \eqref{fRic}, and \eqref{fR}.

	\section{Proof of Theorem \ref{mse.thm}: Part I}

	Recall that $M$ is a closed manifold with an hyperbolic metric $g_0$. Throughout  this paper we refer to normalized Ricci flow as a  one-parameter family of metrics $(\bar g_t)_{t\in I}$ which solve
	\begin{equation}\label{RF}
		\frac{d\bar g_t}{dt}=-2Ric(\bar g_t)-4\bar g_t.
	\end{equation}
	
	\begin{thm}\label{asympt.thm} There is a neighborhood $\mathcal V$ of $g_0$ so that for all $g\in \mathcal V$ with $R(g)\geq -6$ the following holds:
		For any sequence  $\Pi_m\in {S_{1/m,\bar \mu}(M)}$ we have
		$$\liminf_{m\to\infty}\frac{{\rm area}_g(\Pi_m)}{4\pi(l_m-1)}\geq 1,$$
		where $l_m$ is the genus of an essential surface in $\Pi_m$.
		
		If equality holds then $g$ is isometric to $g_0$.
	\end{thm}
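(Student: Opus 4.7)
The approach is by contradiction, combining stability of the hyperbolic metric under normalized Ricci flow with the rigidity of $\text{PSL}(2,\R)$-invariant measures via Ratner--Shah, applied to the Gauss identity \eqref{identity} for laminar measures.

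Suppose $\liminf_m \text{area}_g(\Pi_m)/(4\pi(l_m-1)) \leq 1 - 2\delta$ for some $\delta > 0$. I would first shrink $\mathcal V$ so that the normalized Ricci flow $(g_t)_{t\geq 0}$ from any $g \in \mathcal V$ exists for all time, preserves $R(g_t) \geq -6$, and converges in a suitable gauge to $g_0$ with asymptotics $g_t = g_0 + e^{-\lambda t}\bar h + o(e^{-\lambda t})$, where $\bar h$ is an eigentensor of the linearization at $g_0$ of the trace-free Ricci operator (as in \cite{lowe}). The first key step is to turn the initial deficit into the propagation estimate
$$\text{area}_{g_t}(\Pi_m)/(4\pi(l_m-1)) \leq 1 - \delta e^{-\lambda t} \quad\text{for all } t \geq 0 \text{ and all } m \text{ large},$$
obtained by computing $\tfrac{d}{dt}\text{area}_{g_t}(\mathbf\Sigma_{g_t}(\Pi_m))$ via the envelope property at the $g_t$-minimal representative; the resulting integrand $Ric_{g_t}(\nu,\nu) - R(g_t) - 4$ decomposes into a trace-free Ricci term of size $O(e^{-\lambda t})$ (by stability) and a nonpositive term proportional to $R(g_t)+6$.

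Next, for each $m$ and $t$, let $\phi_{m,t} \in \mathcal F(M,1/m)$ be an equivariant lift associated to $\mathbf\Sigma_{g_t}(\Pi_m)$, and form the laminar measure $\delta_{\phi_{m,t}}$. Using Lemma \ref{laminations.compact}, push forward along $\Omega$ and extract a subsequential weak limit $\mu_t$ on $F(M)$ with $\mu_t(1) = \lim_m\text{area}_{g_t}(\Pi_m)/(4\pi(l_m-1))$, followed by a further limit $\mu_{+\infty} := \lim_{t\to\infty}\mu_t$. By Lemma \ref{compactness.lemm}, $\mu_{+\infty}$ is invariant under the homogeneous $\text{PSL}(2,\R)$-action on $F(M)$. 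Substituting the stability expansion into the curvature terms of \eqref{identity}, all of which vanish identically at $g_0$, and linearizing in $e^{-\lambda t}\bar h$ gives
$$1 = \mu_t(1) + e^{-\lambda t}\mu_{+\infty}(\mathcal L(\bar h)) + o(e^{-\lambda t})$$
for an explicit linear functional $\mathcal L$ in $\bar h$ and the normal direction.

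The concluding step is to invoke Ratner \cite{ratner} and Shah \cite{shah}: the $\text{PSL}(2,\R)$-invariant probability measure $\mu_{+\infty}$ on $F(M)$ is a convex combination of Haar measure and measures supported on closed $\text{PSL}(2,\R)$-orbits, and on each homogeneous support the $\text{PSL}(2,\R)$-orbit average of $\mathcal L(\bar h)$ vanishes because $\bar h$ transforms in a nontrivial irreducible summand under $\text{PSL}(2,\R)$. Hence $\mu_{+\infty}(\mathcal L(\bar h)) = 0$, forcing $1 - \mu_t(1) = o(e^{-\lambda t})$ and contradicting the propagation estimate. The rigidity case is handled by rerunning the argument with $\delta = 0$: the resulting identities at second order force $\bar h \equiv 0$, $R(g)\equiv -6$, and every $g_0$-totally geodesic plane to be $g$-minimal, which combined with density of such planes in $F(M)$ pins $g$ to $g_0$. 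The main technical obstacle is the propagation estimate, since the $g_t$-minimizers $\mathbf\Sigma_{g_t}(\Pi_m)$ themselves vary with $t$ and uniform quantitative control of their geometry along the Ricci flow is required.
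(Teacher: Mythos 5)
Your overall blueprint matches the paper's: assume a persistent area deficit, propagate it along normalized Ricci flow, pass to a $\text{PSL}(2,\R)$-invariant limit measure, and kill the leading-order term via Ratner's classification. There are, however, two genuine gaps and one misdirection.

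\textbf{The propagation estimate does not follow from your stated decomposition.} Writing the area-evolution integrand as $Ric_{g_t}(\nu,\nu)-R(g_t)-4=\mathring{Ric}_{g_t}(\nu,\nu)-\tfrac{2}{3}(R(g_t)+6)$ and discarding the nonpositive second term only yields $A'(t)\leq C e^{-\lambda t}A(t)$, which integrates to a bounded ratio, not to the decaying bound $A(t)/(4\pi(l_m-1))\leq 1-\delta e^{-t}$. The paper gets the needed ODE $A'(t)\leq 4\pi(l_m-1)-A(t)$ by substituting the Gauss equation to replace $\int \mathring{Ric}(\nu,\nu)$ with $4\pi(l_m-1)-A(t)-\int(|A|^2/2-(R+6)/6)$; the resulting rate is exactly $1$ by pure ODE comparison, independently of whatever rate $\lambda$ governs the Ricci-flow asymptotics. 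Your free parameter $\lambda$ conflates two a priori different rates; the argument only closes because Gauss equation forces the ODE rate to be $1$, and then the stability proposition $\mathcal A(\bar h)=-\bar h$ (eigenvalue exactly $1$) is needed to match it. Relatedly, the paper keeps $\phi_m$ fixed (defined at $g_0$) so that only one limit measure $\bar\mu$ appears, with the time-dependence absorbed into $\Lambda_{g_t}$, $|A|^2_{g_t}$, etc.; your two-parameter family $\phi_{m,t}$ and double limit $\mu_t\to\mu_{+\infty}$ adds compatibility issues (convergence of $\mu_t(\mathcal L(\bar h))$, invariance of each $\mu_t$) that are avoidable.

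\textbf{The Ratner step is not justified by ``$\bar h$ transforms in a nontrivial irreducible summand.''} The function $\theta(\bar h)|_{F(S)}$ for a closed totally geodesic $S$ is $f\circ\pi$ with $f=\bar h(n,n)$, and the claim is exactly $\int_S f\,dA_{g_0}=0$. This is false for a generic trace-free tensor (e.g. $n^\flat\otimes n^\flat-\tfrac13 g_0$ gives $f\equiv 2/3$); it holds here only because $\bar h$ is divergence-free with $\Delta_{g_0}\bar h=-3\bar h$. The paper's Lemma \ref{eigenfunction.lemm} turns these into the Bochner-type identity $\Delta_S f+6f=\text{div}_S X$, and Stokes on the closed $S$ finishes. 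An invariance-theoretic slogan cannot replace this computation, since $\bar h$ is a fixed tensor, not a vector in a $\text{PSL}(2,\R)$-module, and nothing a priori places $f$ orthogonal to constants. Finally, your rigidity sketch (second-order identities forcing $\bar h\equiv 0$ and $g_0$-geodesic planes to be $g$-minimal) is not the argument here and appears to borrow from the rigidity proof of Theorem \ref{rel.thm}; the paper instead runs the flow for a short time, uses the strong maximum principle to get $R(g_t)\geq -6+2\delta$ when $g$ is not Einstein, and derives a contradiction from the strengthened ODE $A'\leq 4\pi(l_m-1)-(1+\delta)A$.
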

	
	\begin{proof}

		Consider the neighborhood $\mathcal U$ of $g_0$ described in Section \ref{prelim}. From \cite[Appendix A]{knopf} we see that we can find  a neighborhood  of $g_0$ in the $C^{2,\alpha}$-topology so that for every initial condition  in that neighborhood, the normalized Ricci flow exists for all time and converges exponentially fast  in the $C^{2,\alpha}$-topology to an hyperbolic metric in $\mathcal U$. Reasoning like in \cite[Section 17]{hamilton} we can upgrade the convergence and find a  smaller open neighborhood $\mathcal V\subset {\mathcal U}$ of $g_0$  so that for every $g\in \mathcal V$ the normalized Ricci flow $(\bar g_t)_{t\geq 0}$
		starting at $g$ exists for all time, does not leave $\mathcal U$, and converges  exponentially fast to an Einstein metric in $\mathcal U$, which must be isometric to $g_0$ from Mostow rigidity.  Furthermore, in \cite{knopf} it is also constructed a family of diffeomorphisms $\{\Phi_t\}_{t\geq 0}$ converging strongly to some diffeomorphism so that  $g_t:=\Phi_t^*\bar g_t$ solves the DeTurck-modified Ricci flow (which is strictly parabolic) and $g_t$ converges to $g_0$ as $t\to\infty$.

		The maximum principle implies that the condition $R(g)\geq -6$ is preserved by the normalized Ricci flow because
		$$\frac{d}{dt}R(\bar g_t)\geq \Delta_{g_t}R(\bar g_t)+\frac{2}{3}R(\bar g_t)(R(\bar g_t)+6).$$

		\subsection{Proof of inequality} Suppose for contradiction that the inequality fails for some metric $g\in \mathcal V$.  Thus we can find  $\Pi_m\in S_{1/m,\bar \mu}(M)\cap S_{\bar\varepsilon}(M)$ and $\delta>0$  so that for all $m\in \N$
		$$\frac{\text{area}_g(\Pi_m)}{4\pi(l_m-1)} \leq 1-\delta.$$

		Because $\bar g_t\in \mathcal U$ for all $t$, there is $\phi_m\in \mathcal F(M,1/m)$ equivariant  with respect to a representation of a Fuchsian subgroup of  ${\rm PSL}(2,\R)$ in $\pi_1(M)$ so that ${\bf \Sigma}_{g_t}(\partial \phi_m)\in \Pi_m$, ${\bf \Sigma}_{g_t}(\partial \phi_m)$ depends smoothly on $t$, and $\text{area}_{g_t}(\Pi_m)=\text{area}_{g_t}({\bf \Sigma}_{g_t}(\partial \phi_m))$  for all $t\geq 0$. Fix $m\in\N$ and denote by $\nu_t$ the normal vector to ${\bf \Sigma}_{g_t}(\partial \phi_m)$ with respect to $g_t$. Using $R(g_t)\geq - 6$ and Gauss equation
		\begin{multline}\label{area.evolution}
			\frac{d}{dt}\text{area}_{g_t}(\Pi_m) = -\int_{{\bf \Sigma}_{g_t}(\partial \phi_m)}R(g_t)-Ric(g_t)(\nu_t,\nu_t)+4\,dA_{g_t}\\
			=4\pi(l_m-1)-\text{area}_{g_t}(\Pi_m)-\int_{{\bf \Sigma}_{g_t}(\partial \phi_m)} \frac{|A|^2}{2}+\frac{R(g_t)+6}{2}dA_{g_t} \\
			\leq 4\pi(l_m-1)-\text{area}_{g_t}(\Pi_m).
		\end{multline}
		Solving the ODE obtained by replacing the inequality sign above by an equality sign we get that for all $t\geq 0$
		\begin{equation}\label{loweRF}
			\frac{\text{area}_{g_t}(\Pi_m)}{4\pi(l_m-1)}\leq 1-e^{-t}\left(1-\frac{\text{area}_g(\Pi_m)}{4\pi(l_m-1)}\right)\leq 1-\delta e^{-t}.
		\end{equation}

		Combining \eqref{identity} with \eqref{loweRF} and using the fact that $R(g_t)+6\geq 0$ we get
		\begin{equation}\label{fund.identity}
			\delta e^{-t}\leq \delta_{\phi_m}\left(\left[Ric(g_t)(\nu,\nu)-{R(g_t)/3}+{|A|^2_{g_t}}/{2}\right]\Lambda_{g_t}\right).
		\end{equation}

		After passing to a subsequence, ${\Omega_*}\delta_{\phi_m}$ converges weakly to {the Liouville  measure $\bar \mu$ on $F(M)$ because $\Pi_m\in S_{1/m,\bar \mu}(M).$}

		Let $L$ denote the linearization of  the traceless Ricci tensor at $g_0$. For every $2$-tensor $h$ on $M$ set $\theta(h):F(M)\rightarrow \R$ to be the continuous  function given by
		\begin{equation}\label{theta.map}
			\theta(h)(x,\{u_1,u_2,n\}):=L(h)_{x}(n,n).
		\end{equation}
		
		Set $h_t:=g_t-g_0$. We use $O(\beta)$ to denote a term bounded by  $C\beta$, where $C$ does not depend on $m$ or $t$.
		\begin{lemm}
			$\delta \leq \bar\mu(e^t\theta(h_t))+O(e^{-1/3t})$.
		\end{lemm}
		\begin{proof}
			
			Write
			\begin{itemize}
				\item $E_1:=\delta_{\phi_m}({|A|^2_{g_t}}/{2}\,\Lambda_{g_t})$;
				\item $ E_2:=\delta_{\phi_m}([Ric(g_t)(\nu,\nu)-{R(g_t)/3}](\Lambda_{g_t}-1))$;
				\item$E_3:=\delta_{\phi_m}([Ric(g_t)(\nu,\nu)-{R(g_t)/3}]-\theta(h_t)\circ \Omega)$.
			\end{itemize}
			We now estimate these terms. We know from Proposition \ref{ricci-stable} that $$|h_t|_{C^4}=O(e^{-2/3t}).$$
			
			Fix $m\in\N$ and set $\gamma_m:=\partial \phi_m$. The surface $\Sigma_{g_t}(\gamma_m)$ has zero mean curvature with respect to $g_t$. Thus the mean curvature $H_{g_0}(\Sigma_{g_t}(\gamma_m))$ of $\Sigma_{g_t}(\gamma_m)$ with respect to the hyperbolic  metric   has $C^{0,\alpha}$-norm bounded  by $O(e^{-2/3t})$. (This follows from the fact that $h_t$ and $\nabla h_t$ have both that order).

			Using the mean convex foliation of $\H^3$ coming from the discs equidistant to $D(\gamma_m)$ that was described by Uhlenbeck in \cite[Theorem 3.3]{uhlenbeck}, the maximum principle implies that  for some constant $c_0$
			$$\sup_{m\in\N} |f_{\gamma_m}|_{L^\infty}\leq c_0\sup_{m\in\N} |H_{g_0}(\Sigma_{g_t}(\gamma_m))|_{L^{\infty}}=O(e^{-2/3t}).$$
			Elliptic regularity implies the existence of a  constant $c_1>0$ such that $$\sup_{m\in\N}|f_{\gamma_m}|_{C^{2,\alpha}}\leq c_1\sup_{m\in\N} (|f_{\gamma_m}|_{L^\infty}+|H_{g_0}(\Sigma_{g_t}(\gamma_m))|_{C^{0,\alpha}})=O(e^{-2/3t}).$$ 
			Thus $|A|^2_{g_0}(\Sigma_{g_t}(\gamma_m))=O(e^{-4/3t}+A_m^2)$, where $A_m:=||A_{g_0}(D(\gamma_m))||_{\infty}$. If $\lambda_j(g_t)$ and $\lambda_j(g_0)$, $j=1,2$ denote, respectively, the principal curvatures of $\Sigma_{g_t}(\gamma_m)$ with respect to $g_t$  and $g_0$, we have
			$$\lambda_j(g_t)=\lambda_j(g_0)+O(|h_t|_{C^1}),\quad j=1,2.$$ 
			Hence $|A|^2_{g_t}(\Sigma_{g_t}(\gamma_m))=O(e^{-4/3t}+A_m^2)$ and thus
			\begin{equation}\label{estimateA}
				2E_1=\frac{1}{4\pi(l_k-1)}\int_{{\bf \Sigma}_{g_t}(\gamma_m)}|A|_{g_t}^2dA_{g_t}=O(e^{-4/3t}+A_m^2).
			\end{equation}
			From Gauss equation, \eqref{gauss.phi}, and {the definition of $A_m$,} we obtain that
			$$|C(\phi_m)-1|= {O(A_m^2)}.$$ Moreover, from Proposition \ref{asymptotic.expn} we obtain 
			\begin{equation*}
				|Jac_{g_t}(F_{\gamma_m})|=1+O(|h_t|_{C^1})+O(|f_{\gamma_m}|_{C^1})=1+O(e^{-2/3t}).
			\end{equation*}
			Thus $|Jac_{g_t}(F_{\gamma_m}\circ\phi_m)|=|Jac_{g_t}(F_{\gamma_m})|C(\phi_m)=1+O(e^{-2/3t}+A_m^2)$ and this means  (see \eqref{jac.fcn}) that
			\begin{equation}\label{estimate.jac}
				\Lambda_{g_t}(\phi_m)=1+O(e^{-2/3t}+A_m^2).
			\end{equation}
			Recall the definitions in \eqref{omega.map} and  \eqref{theta.map}. {From Proposition \ref{asymptotic.expn}  and the fact that $|h_t|_{C^4}$ and  $\sup_{m\in\N}|f_{\gamma_m}|_{C^{2,\alpha}}$ have order $O(e^{-2/3t})$, we see that}{
				\begin{multline}\label{first.expn}
					[Ric(g_t)(\nu,\nu)-R(g_t)/3](\phi_m)\\
					=L(h_t)_{\phi_m(i)}(n(\gamma_m),n(\gamma_m))
					+O(e^{^{-4/3t}})\\
					=\theta(h_t)\circ\Omega(\phi_m)+O(e^{^{-4/3t}}).
			\end{multline}}
			%{Not sure if we should put the definition of $O^k$ here also since it isn't defined until the appendix.  Likewise for $\mathcal{A}$ on page 14.} \iffalse Here given $p\in M$, $k\in \N_0$, and $l\in \N$, we denote by $O_p^k(|h|^l)$ any quantity for which there is a constant $\alpha_{k,l}$ (independent of $p$ and $g\in \mathcal U$) so that its absolute value at $p$ is bounded by $\alpha_{k,l}\sum_{j=0}^k|\nabla^k h|^l(p)$.\fi  
			Note that  $\theta(h_t)=O(|h_t|_{C^2})=O(e^{-2/3t})$ and so we obtain from \eqref{first.expn} and  \eqref{estimate.jac} that both
			$$ E_2=\delta_{\phi_m}([\theta(h_t)\circ\Omega+O(e^{^{-4/3t}})](\Lambda_{g_t}-1))=O(e^{^{-4/3t}}+A_m^2)$$
			and $E_3=O(e^{^{-4/3t}})$. As a result, 
			\begin{multline*}
				\delta_{\phi_m}\left(\left[Ric(g_t)(\nu,\nu)-{R(g_t)/3}+{|A|^2_{g_t}}/{2}\right]\Lambda_{g_t}\right)\\
				=\delta_{\phi_m}(\theta(h_t)\circ\Omega)+E_1+E_2+E_3=\Omega^*\delta_{\phi_m}(\theta(h_t))+O(e^{^{-4/3t}}+A_m^2)
			\end{multline*}
			and so we have from \eqref{fund.identity}
			$$\delta e^{-t}\leq  \Omega^*\delta_{\phi_m}(\theta(h_t))+O(e^{^{-4/3t}}+A_m^2).$$
			Making $m\to\infty$ the result follows because $A_m\to 0$.
		\end{proof}

		{Consider the operator on symmetric $2$-tensors 
			\begin{equation}\label{deturck}
				\mathcal A(h)=\Delta_{g_0} h-2(tr_{g_0}h )g_0+2h.
		\end{equation}}
		We know from Proposition \ref{ricci-stable} that $e^th_t$ converges in {$W^{k,2}$} to $\bar h$, where $\bar h$ satisfies $\mathcal A(\bar h)=-\bar h$. Thus  from Sobolev embedding theorem we have that $e^th_t$ converges to $\bar h$ in {$C^2$} and so $e^t\theta(h_t)\to \theta (\bar h)$. The previous lemma implies
		$$
		\delta\leq \bar\mu(\theta(\bar h)).
		$$
		We now show that $\bar \mu(\theta(\bar h))=0$, which gives us a contradiction and thus implies that for any sequence  $\Pi_m\in {S_{1/m,\bar\mu}}(M)$ we have
		$$\liminf_{m\to\infty}\frac{{\rm area}_g(\Pi_m)}{{\rm area}_{g_0}(\Pi_m)}\geq 1.$$

		\begin{prop}\label{ergodic}{We have $\bar \mu(\theta(\bar h))=0$}.
		\end{prop}
		\begin{proof}
			Because $\bar h$ is the lowest eigenfunction for $\mathcal A$, it follows from Koiso Bochner formula (see \cite{knopf}) that $\bar h$ is trace free and divergence free. Hence from the formula for the linearization of the  traceless Ricci tensor \cite[Theorem 1.174]{besse} we have
			$$L(\bar h)=-\frac{1}{2}\mathcal A(\bar h)=\frac{{\bar h}}{2}.$$
			Thus for all $(x,\{u_1,u_2,n\}) \in F(M)$ we have 
			\begin{equation}\label{theta.formula}
				\theta(\bar h)((x,\{u_1,u_2,n\}))=\frac{1}{2}\bar h_{x}(n,n).
			\end{equation}
			
			%From Ratner's classification theorem \cite{ratner} we have that $\eta$ is either the homogeneous Lebesgue measure $\mu_{Leb}$ on $F(M)$ or there is a totally geodesic surface $S\subset M$ so that $\eta$ is the homogeneous Lebesgue measure on  $F(S)$, the frame bundle of $S$.
			
			{We have $tr_{g_0}\bar h=0$  and  from the fact that $\bar\mu$ is the homogeneous Liouville measure on $F(M)$ we deduce
			$$\bar\mu(\theta(\bar h))=\frac{1}{\text{vol}(M)}\int_M \frac{1}{6}tr_{g_0}\bar h\,dV_{g_0}=0.$$ }
		\end{proof}

		\subsection{Proof of rigidity} Suppose that for some metric $g\in \mathcal V$ with $R(g)\geq -6$  and some  sequence  $\Pi_m\in {S_{1/m,\bar\mu}(M)}$ we have
		$$\liminf_{m\to\infty}\frac{{\rm area}_g(\Pi_m)}{4\pi(l_m-1)}= 1,$$
		where $l_m$ is the genus of an essential surface in $\Pi_m$. 
		Run normalized Ricci flow $(g_t)_{0\leq t\leq \bar t}$ starting at $g$ for a short time interval and set
		$$a(t):=\liminf_{m\to\infty}\frac{{\rm area}_{g_t}(\Pi_m)}{4\pi(l_m-1)}.$$
		From \eqref{loweRF} we see that  $a(0)=1$ implies that $a(t)\leq 1$ for all  $0\leq t\leq \bar t$ and thus $a(t)=1$ for all  $0\leq t\leq \bar t$.

		Suppose that $g$ is not Einstein. From the strong maximum principle applied to the evolution equation of $R(g_t)$ we obtain the existence of $\delta$ so that $R(g_t)\geq -6+2\delta$ for all $\bar t/2\leq t\leq\bar t$. Thus we see from \eqref{area.evolution} that  for all $\bar t/2\leq t\leq\bar t$ and all $m\in\N$ 
		$$
		\frac{d}{dt}\text{area}_{g_t}(\Pi_m)  \leq 4\pi(l_m-1)-(1+\delta)\text{area}_{g_t}(\Pi_m).
		$$
		ODE comparison gives us a contradiction because
		$$a(\bar t)\leq a(\bar t/2)e^{-(1+\delta)\bar t/2}+\frac{1-e^{-(1+\delta)\bar t/2}}{1+\delta}= e^{-(1+\delta)\bar t/2}+\frac{1-e^{-(1+\delta)\bar t/2}}{1+\delta}<1.$$
	\end{proof}
	
	\section{Proof of Theorem \ref{mse.thm}: Part II}
	We use Perelman's Ricci flow with surgery \cite{perelman} to remove the local condition on Theorem \ref{asympt.thm}.
	
	\begin{thm}\label{asympt.thm2} Assume $g$ is a metric on $M$ such that $R(g)\geq -6$. For any sequence  $\Pi_m\in {S_{1/m,\bar \mu}(M)}$ we have
		$$\liminf_{m\to\infty}\frac{{\rm area}_g(\Pi_m)}{4\pi(l_m-1)}\geq 1,$$
		where $l_m$ is the genus of an essential surface in $\Pi_m$.
		
		If equality holds then $g$ is isometric to $g_0$.
	\end{thm}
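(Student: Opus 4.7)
The plan is to run Perelman's normalized Ricci flow with surgery $(g_t)_{t\geq 0}$ starting at $g$ and reduce to Theorem \ref{asympt.thm} applied at a large time. Since $M$ carries the hyperbolic metric $g_0$, Perelman's work shows that after finitely many surgeries the flow becomes a smooth flow on all of $M$ that converges to a metric isometric to $g_0$ as $t \to \infty$; in particular $g_t$ lies in the neighborhood $\mathcal V$ of Theorem \ref{asympt.thm} for all $t$ beyond some threshold $T_0$. Because $M$ admits a hyperbolic metric it is irreducible and aspherical, so every surgery corresponds to removing a capped horn and replacing it by a standard cap, and these surgeries do not alter the homotopy classes $\Pi_m$.

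The central technical point is to track the area-minimizing surfaces ${\bf \Sigma}_{g_t}(\Pi_m) \in \Pi_m$ across surgery times. The key claim is that for each $m$, and for surgery scale $h$ chosen small enough, the minimizer ${\bf \Sigma}_{g_t}(\Pi_m)$ lies entirely outside the capped horn regions. This is accomplished by area comparison: the area of ${\bf \Sigma}_{g_t}(\Pi_m)$ admits a uniform upper bound comparable to $4\pi(l_m-1)$ throughout the flow (by comparison with the $g_0$-area of a fixed smooth representative, transported via controlled diffeomorphisms), whereas any essential surface passing through a capped horn of surgery scale $h$ must contribute area at least of order a large power of $1/h$, by the standard local model for the horn. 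Choosing the surgery scales small enough relative to the a priori area bound then keeps the minimizers well away from the surgery region, so ${\bf \Sigma}_{g_t}(\Pi_m)$ remains a smooth minimizer in $\Pi_m$ on every smooth time interval and the areas do not jump across surgery times.

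Granted this, the area evolution inequality \eqref{area.evolution} from Theorem \ref{asympt.thm} holds at every smooth time. If the desired inequality fails we have $\text{area}_g(\Pi_m)/(4\pi(l_m-1)) \leq 1 - \delta$ along a subsequence; integrating the ODE across surgeries then gives $\text{area}_{g_t}(\Pi_m)/(4\pi(l_m-1)) \leq 1 - \delta e^{-t}$ for all $t \geq 0$. Picking $t > T_0$ so that $g_t \in \mathcal V$ and applying Theorem \ref{asympt.thm} to $g_t$ yields $\liminf_{m\to\infty} \text{area}_{g_t}(\Pi_m)/(4\pi(l_m-1)) \geq 1$, a contradiction. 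The rigidity assertion is handled as in Theorem \ref{asympt.thm}: if the liminf equals $1$, then either $g$ is already Einstein, in which case Mostow rigidity gives $g \simeq g_0$, or the strong maximum principle applied to the evolution of $R(g_t)$ produces $R(g_t) \geq -6 + 2\delta$ on a short interval, sharpening the ODE comparison and forcing the liminf to drop strictly below $1$, a contradiction.

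The main obstacle is the area-comparison step. One must produce an upper bound on $\text{area}_{g_t}(\Pi_m)$ that is genuinely uniform across all surgery times, and quantify the geometry of capped horns precisely enough to show that any essential surface meeting such a horn contributes a large amount of area; a subtle point is that an essential surface could in principle track along a thin horn for some distance, so the lower bound must exploit the minimizing property in a non-trivial homotopy class rather than merely the local diameter of the horn.
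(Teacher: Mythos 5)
Your overall framework — Ricci flow with surgery, reduction to Theorem \ref{asympt.thm} at a late time once the flow is smooth and near $g_0$, tracking minimizers across surgeries, and the same rigidity argument — matches the paper. The gap is precisely in the step you flag at the end: your proposed mechanism for keeping the minimizer ${\bf\Sigma}_{\bar g_t}(\Pi_m)$ out of the surgery region does not work as stated, and the paper handles it differently.

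You assert a uniform upper bound on ${\rm area}_{\bar g_t}(\Pi_m)$ comparable to $4\pi(l_m-1)$, together with the claim that a surface entering a capped horn of surgery scale $h$ must contribute area of order a large power of $1/h$. The second claim is not correct: in a neck of scale $\lambda\sim h$, a minimal disc crossing the neck contributes area of order $\lambda^{2}/\varepsilon_{0}$, which is \emph{small} in absolute terms as $h\to 0$, so it cannot be played off against a global area bound that scales with $l_m$. The paper instead makes a \emph{local, relative} area comparison that is independent of the global area of the surface. Near a boundary point $p$ of $M\setminus\Lambda_k$ there is an $\varepsilon_0$-neck, and $M\setminus\Lambda_k$ lies inside a ball $B_k$ bounded by a far cross-sectional sphere $\Phi(S^2\times\{-1/\varepsilon_0\})$ of scale $\lambda$. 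Since $\Sigma_t$ is essential it cannot be contained in $B_k$, so the component $D_p$ of $\Sigma_t\cap B_k$ through $p$ has boundary on $\partial B_k$. By the monotonicity formula applied on unit subintervals of the neck, $D_p$ has area at least of order $(1/\varepsilon_0)\lambda^{2}$, which after choosing $\varepsilon_0$ small exceeds $(4\pi+1)\lambda^{2}$; on the other hand $\partial D_p$ can be capped off inside $\partial B_k$ by a region of area at most roughly $4\pi\lambda^{2}$. Cutting out $D_p$ and gluing in this cap produces a homotopic surface of strictly smaller area, contradicting minimality. This works because both quantities scale like $\lambda^{2}$ and the comparison is between a constant multiple of $\lambda^2$ against $(1/\varepsilon_0)\lambda^2$, with $\varepsilon_0$ a universal parameter of the surgery construction chosen in advance — no appeal to a uniform global area bound is needed. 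You also elide the step (which the paper gets from \cite[Lemma 9]{bben}) that, once the minimizer avoids the surgery region, $t\mapsto {\rm area}_{\bar g_t}(\Pi)$ is Lipschitz, which is what allows the ODE comparison to be run across surgery times.
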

	\begin{proof}
		We will use the notation and results of \cite{perelman}. For a more detailed treatment the reader can consult \cite{bamler}.
		
		Note that Ricci flow and normalized Ricci flow \eqref{RF} differ only by scaling. With $\varepsilon_0$ small, Perelman finds in  \cite[Section 4]{perelman}  a sequence of manifolds $M_k$ with $M_0=M$, a discrete set of times $\{t_k\}_{k\in\N_0}$ with $t_0=0$,  and a sequence of smooth solutions to normalized Ricci flow $(\bar g_t)_{t_k\leq t< t_{k+1}}$ on $M_k$ with $\bar g_0=g$ so that the following holds: each $M_{k+1}$ is obtained from $M_k$ by surgery as described in \cite[Section 4.4]{perelman} but, because $M$ is irreducible and contains no embedded projective planes, the only possible surgeries remove $\varepsilon_0$-caps and glue in (perturbed) standard caps. Hence $M_k=M$ for all $k\in \N_0$ (after discarding  $3$-spheres) and, still following   \cite[Section 4]{perelman},  there is a sequence of compact sets $\Lambda_k\subset M$ such that $M\setminus \Lambda_k$ is diffeomorphic to a union of open balls, $\bar g_t$ converges smoothly to $\bar g_{t_{k+1}}$ on $\Lambda_k$ as $t\to t_{k+1}$, and for all $t<t_{k+1}$ close to $t_{k+1}$, the metric $\bar g_t$ is such that every $p$ in the boundary of $M\setminus \Lambda_k$ is  the center slice of some $\varepsilon_0$-neck. 
		
		%\footnote{In Perelman's notation, $\Lambda_k$ is $\Omega_{\rho_k}$ for some $\rho_k>0.$}
		
		In Section 7.1 of \cite{perelman} Perelman argues  that  $R(\bar g_t)\geq -6$ for all $t\geq 0$ (for the  normalized Ricci flow with surgery). In Section 7.3 and Section 7.4 of \cite{perelman}, Perelman shows that $(M,\bar g_t)$ admits, for all $t$ sufficiently large, a thick-thin decomposition, where the thick part meets the thin part along incompressible tori. Because $M$ is  closed and admits an hyperbolic metric, it has no incompressible tori and thus, for all $t$ sufficiently large, 
		$(M,\bar g_t)$ coincides with the thick part. Therefore, we obtain from Lemma 7.2 of \cite{perelman} that   $|Ric(\bar g_t)+2\bar g_t|_{C^0}$ is small  for  all $t$ sufficiently large. Thus we see from \cite{ye}  that for all $t$ sufficiently large $(\bar g_t)_{t\geq 0}$ is a smooth solution to normalized Ricci flow and converges to an hyperbolic metric  on $M$. In particular there are only finitely many surgery times $t_1<t_2<\ldots<t_K$. Using Mostow rigidity  we can  assume that, after applying a diffeomorphism to ${g_0}$,   $(\bar g_t)_{t\geq 0}$ converges smoothly to $g_0$ as $t\to\infty$. 
		
		Given a homotopy class $\Pi$ of essential surfaces, there is $\Sigma_t\in \Pi$, a minimal surface with respect to $\bar g_t$, such that ${\rm area}_{\bar g_t}(\Pi)={\rm area}_{\bar g_t}(\Sigma_t)$.
		
		\begin{lemm}We can find $\varepsilon_0$ small so that for every homotopy class $\Pi$ of essential surfaces  and  every  $t<t_{k+1}$ close to $t_{k+1}$, $\Sigma_t\subset \Lambda_k$.
		\end{lemm}
		\begin{proof}
			For all $t<t_{k+1}$ close to $t_{k+1}$, the metric $\bar g_t$ is such that every $p$ in the boundary of $M\setminus \Lambda_k$ is  the center slice of some $\varepsilon_0$-neck. This means that there exists a neighborhood $N\subset M$ of $p$ and a diffeomorphism (depending on $p$ and $t$)
			\[
			\Phi: S^2 \times [-{\varepsilon_0}^{-1}, {\varepsilon_0}^{-1}] \rightarrow N 
			\]
			such that $p\in \Phi(S^2 \times \{0\})$ and  for some $\lambda>0$ (depending on $p$ and $t$)
			\begin{equation} \label{closeness} 
				|\lambda^{-2}  \Phi^* {\bar g_{t}} - g_{S^2 \times [- {\varepsilon_0}^{-1}, {\varepsilon_0}^{-1}]}|_{C^{\lceil  1/{\varepsilon_0} \rceil}}< {\varepsilon_0}. 
			\end{equation} 
			Moreover, $\Phi(S^2 \times \{0\})$ is homotopic to a boundary component of $\Lambda_k$ because $M\setminus\Lambda_k$ consists of $\varepsilon_0$-caps.
			
			Suppose for contradiction that $\Sigma_{t}$ passes through $p$.  We first give the argument in the case that $\Sigma_t$ is embedded.  The region $M\setminus \Lambda_k$ is diffeomorphic to a disjoint union of balls and so there is a ball $B_k$ containing $N$ so that $\partial B_k=\Phi(S^2 \times \{- 1/{\varepsilon_0}\})$ and $\Phi(S^2 \times [-\epsilon_0^{-1}, \epsilon_0^{-1}] \subset B_k$. It cannot be the case that $\Sigma_{t}$ is contained in $B_k$ and thus $\Sigma_{t}$ must intersect $\partial B_k$.  We can perturb $B_k$ slightly so that $\partial B_k$ intersects $\Sigma_t$ transversely  in a union of circles.  Let $D_p$ be the connected component of  $\Sigma_t\cap B_k$ that contains $p$.  Then $D_p$ is homeomorphic to an $m$-holed sphere for some $m$, all of the connected components of whose boundary  $\gamma$ correspond to null-homotopic embedded loops in $\Sigma_t$.   
			
			On the one hand, $\gamma$ can be filled in by a region $D_{\gamma}$  in $\partial B_k$ with area at most roughly $4\pi\lambda^2$. On the other hand, $D_p$ must intersect every cross-section $\Phi(S^2 \times \{y \})$ for $- 1/{\varepsilon_0}<y<0$.  By the monotonicity formula, there is a universal constant $c$ such that for $y \in (-1/{\varepsilon_0}+1/2, -1/2)$, 	 
			\[
			\text{area}_{\bar g_t}(D_p \cap \Phi( S^2 \times (y-1/2,y+1/2)) > c    \lambda^{2}.    
			\]
			It follows by choosing disjoint unit intervals in $(-{1}/{\varepsilon_0}+1/2, -1/2)$ that if we chose ${\varepsilon_0}$ such that $1/{\varepsilon_0}$ is greater than $5\pi/c$, then the area of $D_p$ will be larger than $(4\pi+1)\lambda^2$.  By cutting out $D_p$ and gluing in $D_{\gamma} \subset \partial B_k$ we could then produce a surface homotopic to $\Sigma_{t}$ but with smaller area, which is a contradiction.  
			
			In the case that $\Sigma_t$ is only immersed, we lift $\Sigma_t$ to a surface $\overline{\Sigma}_t$ in the cover $M_{\Sigma_t}$ of $M$ homeomorphic to $\Sigma_t \times \mathbb{R}$ corresponding to the inclusion of $\pi_1(\Sigma_t) \rightarrow \pi_1(M)$ for some choice of basepoint on $\Sigma_t$.  Since $\Sigma_t$ minimizes area in its homotopy class in $M$,  $\overline{\Sigma}_t$ minimizes area in  its homotopy class in $M_{\Sigma_t}$.  It then follows from \cite{fhs}[Theorem 2.1] that $\overline{\Sigma}_t$ is embedded in $M_{\Sigma_t}$.  
			
			Assume now that $\Sigma_t$ passes through a point $p$ contained in some ball $B_k$ as above.  Since  $B_k$ is contractible, it lifts to  $M_{\Sigma_t}$, and we can find a lift $\overline{B}_k$ of $B_k$ containing a lift of $p$ lying on $\overline{\Sigma}_t$.  Since $\overline{\Sigma}_t$ is embedded, we can apply the above arguments to get a contradiction as before.

		\end{proof}
		This lemma implies that we can use \cite[Lemma 9]{bben} and conclude that $\bar A(t):={\rm area}_{\bar g_t}(\Pi)$ is a  Lipschitz  function and thus differentiable almost everywhere. Let  $\bar t$ be a  point of differentiability of $\bar A$ and consider the function $a(t):= {\rm area}_{\bar g_t}(\Sigma_{\bar t})$. We have $\bar A(t)\leq a(t)$ for all $t$ and $\bar A(\bar t)=a(\bar t)$. Hence, arguing like in \eqref{area.evolution} we deduce
		$$\bar A'(\bar t)=a'(\bar t)\leq -2\pi\chi(\Sigma_{\bar t})-\bar A(\bar t).$$
		From this ODE we can argue like in the proof of Theorem \ref{asympt.thm} and conclude that if  for some sequence  $\Pi_m\in {S_{1/m,\bar\mu}(M)}$ we have $$\frac{{\rm area}_g(\Pi_m)}{4\pi(l_m-1)}\leq 1-\delta,$$
		where $l_m$ is the genus of an essential surface in $\Pi_m$, then  for all $t\geq 0$
		$$\frac{{\rm area}_{\bar g_t}(\Pi_m)}{4\pi(l_m-1)}\leq 1-\delta e^{-t}.$$
		We have that $\bar g_t$ converges smoothly to $g_0$ and so this contradicts Theorem \ref{asympt.thm}. Thus the inequality in Theorem \ref{asympt.thm2} must hold.
		
		If equality holds in Theorem \ref{asympt.thm2}, the very same argument used as in Theorem \ref{asympt.thm} shows that $g_0$ must be Einstein.
	\end{proof}
	
	We can now derive Theorem \ref{mse.thm} from Theorem \ref{asympt.thm2}.
	\begin{thm}
		Suppose $(M,g)$ has $R(g)\geq -6$. Then ${E_{\bar\mu}(g)}\leq 2$ and equality occurs if and only if $g$ is hyperbolic.
	\end{thm}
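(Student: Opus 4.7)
The plan is to deduce both the inequality and the rigidity statement of Theorem \ref{mse.thm} from Theorem \ref{asympt.thm2} by a counting comparison with the hyperbolic metric $g_0$, for which Calegari-Marques-Neves \cite{cal-marques-neves} established $E(g_0)=2$. The driving mechanism is that a uniform lower bound on the ratio $\text{area}_g(\Pi)/(4\pi(l-1))$ over all $\Pi\in S_{1/m}(M)$ (with $l$ the genus) translates, via Gauss-Bonnet applied in $g_0$, into an inclusion of the counting set for $g$ into a rescaled counting set for $g_0$.

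\textbf{Proof of $E(g)\leq 2$.} Fix $\delta>0$. First I would upgrade Theorem \ref{asympt.thm2} to the uniform statement that there exists $m_\delta$ such that for every $m\geq m_\delta$ and every $\Pi\in S_{1/m}(M)$, $\text{area}_g(\Pi)\geq (1-\delta)\cdot 4\pi(l-1)$; otherwise a diagonal extraction would produce a sequence violating Theorem \ref{asympt.thm2}. For any such $\Pi$ with $\text{area}_g(\Pi)\leq 4\pi(L-1)$, this forces $l-1\leq (L-1)/(1-\delta)$. Applying Gauss-Bonnet to the area-minimizing representative of $\Pi$ in $g_0$ (whose Gauss curvature is $\leq -1$ by stability and the Gauss equation) yields $\text{area}_{g_0}(\Pi)\leq 4\pi(l-1)\leq 4\pi(L'-1)$ with $L':=1+(L-1)/(1-\delta)$. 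Therefore
\[
\#\{\Pi\in S_{1/m}:\text{area}_g(\Pi)\leq 4\pi(L-1)\}\leq \#\{\Pi\in S_{1/m}:\text{area}_{g_0}(\Pi)\leq 4\pi(L'-1)\}.
\]
Taking logs, dividing by $L\ln L$, and letting $L\to\infty$ (using $L'/L\to (1-\delta)^{-1}$ and $\ln L'/\ln L\to 1$), then $m\to\infty$, gives $E(g)\leq 2/(1-\delta)$ from \eqref{asymp.area}. Sending $\delta\to 0$ concludes.

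\textbf{Rigidity.} The converse ``$g$ hyperbolic implies $E(g)=2$'' follows from \cite{cal-marques-neves} combined with Mostow rigidity. For the forward direction, assume $E(g)=2$ and, for contradiction, that $g$ is not isometric to $g_0$. The rigidity clause of Theorem \ref{asympt.thm2} then forbids any sequence $\Pi_m\in S_{1/m}$ satisfying $\liminf_m \text{area}_g(\Pi_m)/(4\pi(l_m-1))=1$. Setting $f(m):=\inf_{\Pi\in S_{1/m}}\text{area}_g(\Pi)/(4\pi(l-1))$, a diagonal argument shows that $\liminf_m f(m)=1$ would allow us to construct such a forbidden sequence, so $\liminf_m f(m)>1$. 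Choose $\beta>1$ with $f(m)\geq \beta$ for $m$ large, and replay the counting argument above verbatim with $(1-\delta)$ replaced by $\beta$; this yields $E(g)\leq 2/\beta<2$, contradicting $E(g)=2$.

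\textbf{Where the work lies.} Theorem \ref{asympt.thm2}, which itself relies on Ricci flow with surgery and $\mathrm{PSL}(2,\R)$-invariant laminar measures, carries all of the geometric content. The remaining steps are essentially a rescaling of the parameter in the entropy count together with two small diagonal extractions needed to convert the sequence-wise statement of Theorem \ref{asympt.thm2} into a uniform lower bound over $S_{1/m}$; no further use of $\text{PSL}(2,\R)$-dynamics, Ricci flow, or curvature analysis is required at this stage.
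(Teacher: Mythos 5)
Your proof is correct and has the same overall architecture as the paper's: a diagonal extraction to upgrade the sequence-wise conclusion of Theorem \ref{asympt.thm2} to a uniform lower bound over $S_{1/m}(M)$, followed by an inclusion of counting sets and the identity $E(g_0)=2$. The one place you diverge is in how $\text{area}_{g_0}(\Pi)$ is controlled: the paper invokes Seppi's asymptotic $\text{area}_{g_0}(\Pi_m)/(4\pi(l_m-1))\to 1$ to compare $\text{area}_g$ and $\text{area}_{g_0}$ directly, whereas you route the comparison through the genus using the one-sided Gauss--Bonnet bound $\text{area}_{g_0}(\Pi)\leq 4\pi(l-1)$ for the Schoen--Yau minimizer (the Gauss equation gives $K\leq -1$; stability is not actually needed for this curvature bound, only for the existence theory). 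Your version is marginally more elementary in that it only uses the coarse upper bound on $\text{area}_{g_0}$, not the sharp two-sided asymptotic, and this is exactly enough for the inclusion of counting sets. For rigidity, you argue contrapositively (non-hyperbolic $\Rightarrow$ uniform $\beta>1$ $\Rightarrow$ $E(g)\leq 2/\beta<2$), while the paper argues directly (if no uniform gap exists, extract a sequence with liminf ratio $\leq 1$ and apply the rigidity clause); these are logically equivalent and both rely on the same observation that $S_{1/m'}(M)\subset S_{1/m}(M)$ for $m'>m$ to fill out a subsequence into a sequence for the diagonal argument.
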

	\begin{proof} In what follows we will use the fact (from \cite{seppi}) that for any $\Pi_m\in S_{1/m}(M)$, if $l_m$ is the genus of an essential surface in $\Pi_m$, then we have ${\rm area}_{g_0}(\Pi_m)/{4\pi(l_m-1)}\to 1$ as $m\to\infty$.
		
		Choose $\delta>0$. Theorem \ref{asympt.thm2}   implies the the existence of $\varepsilon_0$ so that for all $\Pi\in {S_{\varepsilon_0,\bar\mu}(M)}$  we  have
		$${\rm area}_{g_0}(\Pi)\leq(1+\delta){\rm area}_g(\Pi).$$
		Hence for all $\varepsilon\leq \varepsilon_0$ we have
		\begin{multline*}
			\#\{\text{area}_g(\Pi)\leq 4\pi(L-1):\Pi\in {S_{\varepsilon,\bar\mu}(M)}\}\\
			\leq \#\{\text{area}_{g_0}(\Pi)\leq (1+\delta)4\pi(L-1):\Pi\in {S_{\varepsilon,\bar\mu}(M)}\}.
		\end{multline*}
		This inequality and the expression \eqref{asymp.area} for the minimal surface entropy imply
		$${E_{\bar\mu}(g)\leq E_{\bar\mu}(g_0)(1+\delta)}=2(1+\delta).$$
		The inequality follows from making $\delta\to 0$. 
		
		Suppose now that ${E_{\bar\mu}(g)}=2$ for some metric with $R(g)\geq -6$. Reasoning as above we see that if we could find $\delta>0$ and $\varepsilon_0$ so that 
		for all $\Pi\in {S_{\varepsilon_0,\bar\mu}(M)}$
		$${\rm area}_{g_0}(\Pi)\leq (1-\delta){\rm area}_g(\Pi),$$
		then ${E_{\bar\mu}(g)}\leq 2(1-\delta)$.  Hence there is $\Pi_m\in {S_{1/m,\bar\mu}(M)}$ so that 
		$$\liminf_{m\to\infty}\frac{{\rm area}_g(\Pi_m)}{{\rm area}_{g_0}(\Pi_m)}\leq 1$$
		and Theorem \ref{asympt.thm2} implies that $g$ is hyperbolic.
	\end{proof}
	
	\section{Proof of Theorem \ref{rel.thm}}

	For each $m\in\N$, Labourie \cite[Theorem 5.7]{Labourie}  found $N_m\in\N$, $\{\phi^i_m\}_{i=1}^{N_m}$ in  $\mathcal F(M,1/m)$, and $ 0<a_m^1,\ldots,a_m^{N_m}<1$ with $a_m^1+\ldots+a_m^{N_m} =1$, so that:  
	$\phi^i_m$ is equivariant  with respect to a representation of a Fuchsian subgroup  of ${\rm PSL}(2,\R)$ in $\pi_1(M)<\text{PSL}(2,\C)$ and 
	the  laminar measure 
	\begin{equation}\label{measure.laminar}
		\delta_m:=\sum_{i=1}^{N_m}a^i_m\delta_{\phi^i_m}
	\end{equation} is such that
	$\Omega_*\delta_{m}$ converges to  $\bar\mu$, the unit {Liouville} homogeneous measure on $F(M)$.
	
	\begin{prop}\label{connected}
		There is $\phi_m$ in  $\mathcal F(M,1/m)$ equivariant  with respect to a representation of a Fuchsian group $\Gamma_m$  of ${\rm PSL}(2,\R)$ in $\pi_1(M)<\text{PSL}(2,\C)$ such that  $\Omega_*\delta_{\phi_m}$ converges to  ${\bar\mu}$ as $m\to\infty$.
	\end{prop}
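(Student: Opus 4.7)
The plan is to assemble Labourie's convex combination $\delta_m = \sum_i a_m^i \delta_{\phi_m^i}$ into a single periodic laminar measure via a pants interconnection in the long-cuff, nearly-Fuchsian regime. First, I would approximate each weight $a_m^i$ by a rational $n_i/N$ with common denominator $N=N(m)$, introducing an error $O(1/N)$ in the weak-$*$ topology. Since $\phi_m^i\in\mathcal F(M,1/m)$, each $\phi_m^i$ descends to a closed nearly-Fuchsian minimal immersion $\bar\phi_m^i : S_m^i = \H^2/\Gamma_m^i \to M$; by passing to a sufficiently deep finite cover of each $S_m^i$ (which does not alter the associated laminar measure) I arrange a pants decomposition in which every cuff has hyperbolic length at least $L = L(m)$, with $L\to\infty$ as $m\to\infty$.

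Next I would form the disjoint union of $n_i$ copies of (the refined) $S_m^i$ and re-glue the pants along a permutation of the cuffs that (a) pairs cuffs of nearly equal length and holonomy, which is achievable because in the nearly-Fuchsian regime each cuff projects close to a closed geodesic of $M$ and, by a further cover refinement in the spirit of Kahn--Markovic, we can match cuffs lying on a common geodesic; and (b) yields a connected topological surface, which happens generically once the multiplicities $n_i$ are large. The result is a closed topological surface $\bar S_m$ equipped with a piecewise-smooth map into $M$ assembled from the $\bar\phi_m^i$. In the long-cuff limit this assembly lifts to a disc in $\H^3$ whose boundary at infinity is a $(1+1/m)$-quasicircle, i.e., an element of $\mathcal C_{1/m}$. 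Invoking Seppi \cite{seppi} and the nondegeneracy of Section \ref{prelim}, I then produce a unique $\Gamma_m$-equivariant minimal disc $\phi_m \in \mathcal F(M, 1/m)$ with that asymptotic boundary.

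To verify convergence I would compute $\delta_{\phi_m}$ pants by pants: away from its cuffs each pants of $\bar S_m$ agrees with a pants of some $\bar\phi_m^i$ up to a perturbation localized in a collar neighborhood of the cuffs, which contributes a vanishing area fraction as $L\to\infty$. Summing over the $n_i$ copies and normalizing gives
$$\delta_{\phi_m} = \sum_i \frac{n_i}{N}\,\delta_{\phi_m^i} + o(1)$$
in weak-$*$ topology, and combining with the rational approximation yields $\Omega_* \delta_{\phi_m} \to \Omega_* \delta_m \to \mu_{Leb}$.

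The hard part will be the interconnection: producing a connected assembly whose boundary at infinity is genuinely a $(1+1/m)$-quasicircle rather than merely a limit set of some badly behaved representation. This requires both a Kahn--Markovic-type density refinement (to find many pants of different $\bar\phi_m^i$ sharing a common cuff geodesic of $M$, so that cuff gluings are well-defined) and quantitative control on how perturbations at long cuffs propagate to the boundary at infinity—the latter being precisely where the long-cuff regime and the nearly-Fuchsian hypothesis become indispensable, since there the assembly is quasi-rigid. A conceptual alternative, bypassing Labourie's specific $\delta_m$, is to invoke the Kahn--Markovic random-assembly equidistribution theorem directly to produce a single nearly-Fuchsian surface equidistributing to $\mu_{Leb}$.
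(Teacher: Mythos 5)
Your proposal takes a fundamentally different route from the paper, and it has a real gap that you partly acknowledge but do not close. The paper's proof is a short measure-theoretic selection argument: it observes that, by Lemma~\ref{compactness.lemm} and Ratner's theorem, any weak limit of $\Omega_*\delta_{\phi}$ with $\phi\in\mathcal F(M,1/m)$ equivariant must decompose as $\theta\mu_{Leb}+(1-\theta)\mu_{\mathcal T}$ with $\mu_{\mathcal T}$ supported on the (countably many) frame bundles of closed totally geodesic immersions. One then builds a decreasing sequence of open neighborhoods $B_k$ of $\cup_{T\in\mathcal T}T$ with $\mu_{Leb}(B_k)\leq 2^{-2k-3}$, and a simple Markov/pigeonhole estimate (Lemma~\ref{b.is.sero}) extracts from Labourie's finite collection $\{\phi^i_m\}$ a single $\phi_m$ with $\Omega_*\delta_{\phi_m}(B_k)\leq 2^{-(k+1)}$ for $k\leq m$. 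Any weak limit $\nu$ then has $\nu(B_k)\leq 2^{-(k+1)}$, forcing $\theta=1$. No gluing, no covers, no Kahn--Markovic.

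Your plan instead tries to physically assemble a single connected nearly-Fuchsian surface whose laminar measure approximates $\sum a^i_m\delta_{\phi^i_m}$, by taking deep covers, re-gluing pants along long cuffs, and then arguing the assembly stays in $\mathcal F(M,1/m)$. The step you flag as ``the hard part'' is indeed where the proof breaks: gluing nearly-Fuchsian pieces along a long cuff does not automatically stay nearly-Fuchsian, even in the long-cuff limit. The Kahn--Markovic mechanism requires not only matching cuff \emph{lengths and holonomies} but also matching \emph{feet} on each cuff within $O(\varepsilon/R)$; absent that, the doubling of pants can be arbitrarily bent, and the resulting limit set need not be a $(1+1/m)$-quasicircle. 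You would essentially need to re-run the Kahn--Markovic good-pants / mixing machinery adapted to the pants coming from covers of the $S^i_m$, and then separately verify that the resulting assembly equidistributes \emph{in the frame bundle} $F(M)$ to $\mu_{Leb}$ (a stronger statement than equidistribution in $Gr_2(M)$). None of that is supplied. Your fallback --- ``invoke the Kahn--Markovic random-assembly equidistribution theorem directly'' --- is also not a proof; it is an appeal to a result in a form you would still need to formulate and verify gives convergence of $\Omega_*\delta_{\phi_m}$ to $\mu_{Leb}$, rather than merely to some ${\rm PSL}(2,\R)$-invariant measure. The paper's insight is precisely that Ratner's classification makes any such constructive effort unnecessary: one only needs to avoid the totally geodesic locus, which is a countable set of frame sub-bundles and can be dodged by a soft averaging argument.

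A secondary issue: even if the gluing worked, the rational approximation $a^i_m\approx n_i/N$ introduces a discrepancy that you bound only qualitatively (``$O(1/N)$ in weak-$*$''), and the claim that ``each pants of $\bar S_m$ agrees with a pants of some $\bar\phi^i_m$ up to a perturbation localized in a collar'' needs a quantitative estimate showing the collar contribution to $\Omega_*\delta_{\phi_m}$ is $o(1)$ uniformly over test functions on $F(M)$. These are fixable in principle but add further unverified steps on top of the central gap.
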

	\begin{proof}
		The space of all closed totally geodesic immersions in $(M,g_0)$ is countable and so 
		$$\mathcal T:=\{F(S)\subset F(M):\mbox{S is a closed totally geodesic immersion in }(M,g_0)\} $$
		is also countable, where $F(S)$ denotes the frame bundle of $S$ which injects naturally in $F(M)$.
		
		Considering tubular neighborhoods, we can find a decreasing sequence of open sets $\{B_k\}_{k\in\N}\subset F(M)$ so that for all $k\in\N$
		$$\cup_{T\in\mathcal T}T\subset B_k,\quad {\bar\mu}(\partial B_k)=0,\quad\mbox{and}\quad{\bar\mu}(B_k)\leq 2^{-2k-3}.$$ 
		\begin{lemm}\label{b.is.sero} For each $j\in\N$, there is $j\leq m_j\in\N$  and $\phi_j\in \{\phi^1_{m_j},\ldots,\phi^{N_{m_j}}_{m_j}\}$ so that 
			$$\Omega_*\delta_{\phi_j}(B_k)\leq 2^{-(k+1)}\quad\mbox{for all }k\leq j.$$
		\end{lemm} 
		\begin{proof}
			For all $k\in\N$, $\Omega_*\delta_m(B_k)\to{\bar\mu}(B_k)$ as $m\to\infty$. Thus we can find a strictly increasing sequence of integers $\{m_j\}_{j\in\N}$ so that
			\begin{equation}\label{connected1}
				\Omega_*\delta_{m_j}(B_k)\leq 2{\bar\mu}(B_k)\leq 2^{-2(k+1)}\quad\mbox{for all }k\leq j.
			\end{equation}
			Relabel $m_j$ to be $j$ so that $\delta_{m_j}, \phi^i_{m_j}, N_{m_j},$ and $a^i_{m_j}$  become $\delta_j, \phi^i_j$, $N_{j}$, and $a^i_{j}$, respectively.
			
			Consider $\mu_j$ a unit measure on $\{1,\ldots,N_j\}$ so that $\mu_j(i)=a^i_j$ and set
			$$J_{j,k}:=\{i\in \{1,\ldots,N_j\}: \Omega_*\delta_{\phi^i_j}(B_k)\geq 2^{-(k+1)}\}.$$
			From \eqref{connected1} and the definition of $J_{j,k}$ we have that for all $j\geq k$
			$$
			\mu_j(J_{j,k})=\sum_{i\in J_{j,k}}a^i_j\leq 2^{k+1}\sum_{i\in J_{j,k}}a^i_j \Omega_*\delta_{\phi^i_j}(B_k)
			\leq  2^{k+1}\Omega_*\delta_j(B_k)\leq 2^{-(k+1)}.
			$$
			Thus, for all $j\in\N$ and $k\leq j$,
			$\mu_j(\cup_{k=1}^jJ_{j,k})\leq \sum_{k=1}^j2^{-(k+1)}\leq 1/2.$
			Hence $$A_j:=\{1,\ldots,N_j\}\setminus\cup_{k=1}^jJ_{j,k}\neq \emptyset $$
			and we pick $l_j\in A_j$. The maps $\phi_j:=\phi^{l_j}_{m_j}$ satisfy the desired conditions.
		\end{proof}

		After passing to a subsequence, $\Omega_*\delta_{\phi_j}$ converges weakly to a unit measure $\nu$ on $F(M)$. From Lemma \ref{compactness.lemm} we have that $\nu$ is  invariant under  the homogeneous action of $\text{PSL}(2,\R)$.  The proof will be completed if we show that $\nu={\bar\mu}$.
		
		Ratner's classification theorem \cite{ratner} implies that every ergodic probability measures on $F(M)$ invariant under  the homogeneous action of $\text{PSL}(2,\R)$ is either ${\bar\mu}$ or is supported in some $T\in \mathcal T$. Thus, from the ergodic decomposition theorem for $\text{PSL}(2,\R)$-actions  (Theorem 4.2.6 in \cite{katok}), there is $0\leq \theta\leq 1$ so that the measure $\nu$ decomposes as  $\nu=\theta{\bar\mu}+(1-\theta)\mu_{\mathcal T}$, where $\mu_{\mathcal T}$ is some  probability measure on $F(M)$ with support in $\cup_{T\in\mathcal T}T$. Moreover we have from 
		Lemma \ref{b.is.sero} that $\nu(B_k)\leq 2^{-(k+1)}$ for all $k\in\N$. 
		
		Recall that $\cup_{T\in\mathcal T}T\subset B_{k}$ for all $k \in\N$ and so
		$$1-\theta=(1-\theta)\mu_{\mathcal T}(B_k)\leq  \nu(B_k)\leq 2^{-(k+1)}.$$
		Hence $\theta=1$, which means that $\nu={\bar\mu}$.
	\end{proof}

	\begin{prop}\label{compute}   
		$ {\rm Area}_{\rm Id}(g/g_0)=\lim_{m\to\infty}\frac{{\rm area}_g({\bf D}(\partial\phi_m))}{2\pi|\chi({\bf D}(\partial\phi_m))|}.$
	\end{prop}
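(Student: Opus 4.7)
The plan is to realize both sides as weak limits against $\Omega_\ast\delta_{\phi_m}\to \mu_{\mathrm{Leb}}$, which is the content of Proposition \ref{connected}. Introduce the continuous function $\tilde\Lambda_g:\mathcal F(M,\bar\varepsilon)\to \R$, $\tilde\Lambda_g(\phi):=|Jac_g(\phi)|(i)$, namely the $g$-Jacobian of $\phi:\H^2\to M$ at $i$. Using the $\Gamma_m$-equivariance of $\phi_m$ and the same change-of-variables argument that produces the last displayed identity of Section \ref{measures.labourie} but with $F_{\partial\phi}$ replaced by the identity, the first step is to establish the area formula
\[
{\rm area}_g({\bf D}(\partial\phi_m))=4\pi(l_m-1)\,\delta_{\phi_m}(\tilde\Lambda_g).
\]
Since ${\bf D}(\partial\phi_m)$ is a closed immersed surface of genus $l_m$, one has $2\pi|\chi({\bf D}(\partial\phi_m))|=4\pi(l_m-1)$, so the statement reduces to $\delta_{\phi_m}(\tilde\Lambda_g)\to {\rm Area}_{\rm Id}(g/g_0)$.

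Next I would rewrite ${\rm Area}_{\rm Id}(g/g_0)$ as an integral over $F(M)$. Define the continuous function $J_g:Gr_2(M)\to \R$ by $J_g(x,P):=\sqrt{\det(g|_P)/\det(g_0|_P)}$, the pointwise ratio of $g$-area to $g_0$-area on $P$. Unpacking the definitions in Section \ref{prelim} gives $|\wedge^2 {\rm Id}^{-1}|_g(x,P)=J_g(x,P)$, so ${\rm Area}_{\rm Id}(g/g_0)=\int_{Gr_2(M)}J_g\,d\mu_{g_0}$. Let $\hat J_g:F(M)\to\R$ be the pullback of $J_g$ under the map sending a frame to its base point together with the plane it spans. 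Because $\mu_{\mathrm{Leb}}$ pushes forward to $\mu_{g_0}$ under this map, $\mu_{\mathrm{Leb}}(\hat J_g)={\rm Area}_{\rm Id}(g/g_0)$. A direct computation from the definitions of $\Omega$ and $C(\phi)$ (using that $d\phi_i$ sends a hyperbolic orthonormal basis to a $g_0$-orthogonal pair of common length $\sqrt{C(\phi)}$) gives the pointwise identity
\[
\tilde\Lambda_g(\phi)=C(\phi)\cdot\hat J_g\circ\Omega(\phi).
\]

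Given these ingredients, once the uniform estimate $\sup_{\phi\in\mathcal F(M,1/m)}|C(\phi)-1|\to 0$ is established, the boundedness of $\hat J_g$ on the compact space $F(M)$ yields
\[
\delta_{\phi_m}(\tilde\Lambda_g)=\Omega_\ast\delta_{\phi_m}(\hat J_g)+o(1),
\]
and weak convergence $\Omega_\ast\delta_{\phi_m}\to \mu_{\mathrm{Leb}}$ together with the continuity of $\hat J_g$ then forces $\Omega_\ast\delta_{\phi_m}(\hat J_g)\to \mu_{\mathrm{Leb}}(\hat J_g)={\rm Area}_{\rm Id}(g/g_0)$, completing the argument.

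The main technical point I expect is the uniform bound $\sup_{\phi\in\mathcal F(M,1/m)}|C(\phi)-1|\to 0$. I would argue it as follows: by the Seppi-type result invoked in Section \ref{prelim}, the principal curvatures of $D(\gamma)$ can be made uniformly small for $\gamma\in\mathcal C_{1/m}$ as $m\to\infty$, and the minimal-surface Gauss equation $K(\phi)=-1-|A|^2/2$ then forces $K(\phi)\to -1$ uniformly over $\mathcal F(M,1/m)$. Feeding this into the two-sided inequality \eqref{gauss.phi} squeezes $C(\phi)\to 1$ uniformly. Alternatively, sequential compactness (Lemma \ref{laminations.compact}) combined with continuity of $C$ on $\mathcal F(M,\bar\varepsilon)$ and the observation that every element of $\mathcal F(M,0)$ is an isometric immersion (so $C\equiv 1$ there) gives the same conclusion by a contradiction argument via the nestedness $\cap_k\mathcal F(M,1/k)=\mathcal F(M,0)$.
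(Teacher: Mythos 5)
Your proposal is correct and follows essentially the same route as the paper's proof. Both arguments realize ${\rm Area}_{\rm Id}(g/g_0)$ as $\mu_{\rm Leb}$ of the appropriate function on $F(M)$, pass to $\Omega_*\delta_{\phi_m}$ via the weak convergence from Proposition \ref{connected}, unwind $\Omega_*\delta_{\phi_m}$ as an integral over ${\bf D}(\partial\phi_m)$ that differs from the target area ratio only by the conformal factor $C(\phi_m)$, and finish by squeezing $C(\phi_m)\to 1$ uniformly using \eqref{gauss.phi} together with the Seppi-type bound $\|A\|_{L^\infty({\bf D}(\partial\phi_m))}\to 0$ (the paper writes this as $1\le 1/C(\phi_m)\le 1+A_m^2$, which is exactly your estimate). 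Your reorganization via $\tilde\Lambda_g$ and the pointwise identity $\tilde\Lambda_g(\phi)=C(\phi)\,\hat J_g\circ\Omega(\phi)$ is a clean packaging, but it is the same computation that appears in the paper's display \eqref{lemm.formula.area} combined with the coarea formula.
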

	\begin{proof}
		We have for all $(x,P)\in Gr_2(M)$
		$$|\wedge^2{\rm Id}^{-1}|_g(x,P)=|\wedge^2{\rm Id}|^{-1}_g(x,P).$$
		We abuse notation and also denote by $|\wedge^2{\rm Id}|_g$ the following smooth positive function on the frame bundle $F(M)$ 
		$$|\wedge^2{\rm Id}|_g: F(M)\rightarrow \R, \quad (y,\{e_1,e_2,n\})\mapsto|\wedge^2{\rm Id}|_g(y,{\rm span}\{e_1,e_2\}).$$
		We have
		$${\rm Area}_{\rm Id}(g/g_0) ={\bar\mu}(|\wedge^2{\rm Id}|^{-1}_g)$$
		and thus the fact that $\Omega_*\delta_{\phi_m}\to {\bar\mu}$ implies
		\begin{equation}\label{Area.final}
			{\rm Area}_{\rm Id}(g/g_0) ={\bar\mu}(|\wedge^2{\rm Id}|^{-1}_g)=\lim_{m\to\infty}\Omega_*\delta_{\phi_m}(|\wedge^2{\rm Id}|^{-1}_g).
		\end{equation}Consider $U_m\subset \text{PSL}(2,\R)$ a fundamental domain of $\text{PSL}(2,\R)/ \Gamma_m$. Then
		$$\Omega_*\delta_{\phi_m}(|\wedge^2{\rm Id}|^{-1}_g)=\frac{1}{vol(U_m)}\int_{U_m} |\wedge^2{\rm Id}|^{-1}_g\circ\Omega(\phi_m\circ \tau)d\bar\nu(\tau).$$
		Note that for all  $\tau\in  \text{PSL}(2,\R)$  we have, with $z=\tau(i)$,
		$$|\wedge^2{\rm Id}|_g\circ\Omega(\phi_m\circ \tau)=|\wedge^2{\rm Id}|_g(\phi_m(z),(d\phi_m)_{z}(T_z\H^2)).$$
		Thus, denoting the hyperbolic volume form on $\H^2$ by $dA_{hyp}$,
		\begin{multline}\label{lemm.formula.area}
			\Omega_*\delta_{\phi_m}(|\wedge^2{\rm Id}|^{-1}_g)\\
			=\frac{1}{2\pi|\chi({\bf D}(\partial\phi_m))|}\int_{\H^2/ \Gamma_m} |\wedge^2{\rm Id}|^{-1}_g(\phi_m(z),(d\phi_m)_{z}(T_z\H^2))dA_{hyp}(z)\\
			=\frac{1}{2\pi|\chi({\bf D}(\partial\phi_m))|}\int_{{\bf D}(\partial\phi_m)}\frac{|\wedge^2{\rm Id}|^{-1}_g(y,T_y{\bf D}(\partial\phi_m))}{C(\phi_m)\circ \phi_m^{-1}(y)}dA_{g_0}(y).
		\end{multline}
		Set $A_m:=||A||_{L^\infty({\bf D}(\partial\phi_m))}$.
		From Gauss equation and \eqref{gauss.phi} we have
		\begin{equation}\label{inequality}
			1\leq \frac{1}{C(\phi_m)}\leq 1+A_m^2.
		\end{equation}
		The co-area formula says that
		$$\int_{{\bf D}(\partial\phi_m)}{|\wedge^2{\rm Id}|^{-1}_g(y,T_y{\bf D}(\partial\psi_m))}{dA_{g_0}}={\rm area}_g({\bf D}(\partial\phi_m))$$
		and thus, combining  with \eqref{lemm.formula.area} and \eqref{inequality}, we obtain
		$$\frac{{\rm area}_g({\bf D}(\partial\phi_m))}{2\pi|\chi({\bf D}(\partial\phi_m))|}\leq \Omega_*\delta_{\phi_m}(|\wedge^2{\rm Id}|^{-1}_g)\leq (1+A_m^2)\frac{{\rm area}_g({\bf D}(\partial\phi_m))}{2\pi|\chi({\bf D}(\partial\phi_m))|}.$$
		We have that $A_m\to 0$ as $m\to\infty$ and hence we deduce from this inequality  and \eqref{Area.final} the desired result.
	\end{proof}

	\subsection{Proof of inequality} The closed surfaces  ${\bf D}(\partial\phi_m)$ define a homotopy class $\Pi_m\in S_{1/m}(M)$ for all $m\in\N.$  We denote the genus of ${\bf D}(\partial\phi_m)$ by $l_m$. {From Proposition \ref{connected} we have that $\mu_{\Pi_m}\to\bar\mu$ and so after relabeling $\{\Pi_m
	\}_{m\in\N}$ we have that $\Pi_m\in S_{1/m,\bar\mu}(M)$ for all $m\in\N$}.
	
	\begin{prop}${\rm Area}_{{\rm Id}}(g/g_0) {E_{\bar\mu}(g)}\geq 2$.
	\end{prop}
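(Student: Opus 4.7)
The plan is to combine Proposition \ref{compute} with a counting argument based on finite covers of the equidistributing surfaces ${\bf D}(\partial\phi_m)$. This parallels the strategy used by Calegari--Marques--Neves to prove $E(g_0) = 2$, but with areas measured by $g$ instead of $g_0$.

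Set $C := {\rm Area}_{{\rm Id}}(g/g_0)$ and assume $C < \infty$. Writing $A_m := {\rm area}_g({\bf D}(\partial\phi_m))$ and $C_m := A_m/(4\pi(l_m-1))$, Proposition \ref{compute} together with $|\chi({\bf D}(\partial\phi_m))| = 2(l_m-1)$ gives $C_m \to C$, and the equidistribution in Proposition \ref{connected} forces $l_m \to \infty$. The surface ${\bf D}(\partial\phi_m)$ is an immersion of $\H^2/\Gamma_m$ for a Fuchsian subgroup $\Gamma_m \simeq \pi_1(\Sigma_{l_m})$ of $\pi_1(M)$. Any index-$n$ subgroup $\Gamma'_m \subset \Gamma_m$ determines a degree-$n$ cover of ${\bf D}(\partial\phi_m)$ which descends to a closed essential immersion in $M$ with the same limit set $\partial\phi_m \in \mathcal C_{1/m}$; the resulting homotopy class $\Pi \in S_{1/m}(M)$ satisfies ${\rm area}_g(\Pi) \leq n A_m$ by using the cover itself as a competitor.

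By Müller's asymptotic for surface-group subgroup growth, the number of index-$n$ subgroups of $\Gamma_m$ has $\ln s_n(\Gamma_m) = (2l_m - 2)\, n \ln n \,(1+o(1))$; the passage from subgroups to $\pi_1(M)$-conjugacy classes of homotopy classes loses at most a polynomial factor. Setting $N := \lfloor 4\pi(L-1)/A_m \rfloor$, the covers of degree up to $N$ contribute to $\{\Pi \in S_{1/m}(M) : {\rm area}_g(\Pi) \leq 4\pi(L-1)\}$, whence
$$\limsup_{L \to \infty} \frac{\ln \#\{\Pi \in S_{1/m}(M) : {\rm area}_g(\Pi) \leq 4\pi(L-1)\}}{L \ln L} \geq \frac{8\pi(l_m - 1)}{A_m} = \frac{2}{C_m}.$$
The left side is monotone non-increasing in $m$ (because $S_{\varepsilon}(M)$ shrinks with $\varepsilon$) and its limit is $E(g)$ by definition; letting $m \to \infty$ and using $C_m \to C$ yields $E(g) \geq 2/C$, which rearranges to the desired $C \cdot E(g) \geq 2$.

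The main obstacle is bookkeeping for potential over-counting: distinct $\Gamma_m$-conjugacy classes of subgroups of $\Gamma_m$ might become conjugate in $\pi_1(M)$ and hence give the same homotopy class. This is controlled by the finite index of $\Gamma_m$ in its commensurator inside $\pi_1(M)$, producing only a polynomial inflation that is absorbed by the $L \ln L$ normalization. The only other technical input is a sharp enough surface-group subgroup-growth asymptotic, which is a classical result available in the literature and already used in the lower bound for $E(g_0)$ in \cite{cal-marques-neves}.
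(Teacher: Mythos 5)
Your proposal is correct and takes essentially the same route as the paper: both use Proposition \ref{compute} to identify ${\rm Area}_{\rm Id}(g/g_0)$ with $\lim_m {\rm area}_g({\bf D}(\partial\phi_m))/(4\pi(l_m-1))$, then take finite covers of the equidistributing surfaces ${\bf D}(\partial\phi_m)$, invoke surface-group subgroup growth (M\"uller--Puchta) to count the resulting homotopy classes in $S_{1/m}(M)$, and compare against the definition of $E(g)$. The only cosmetic difference is that the paper routes through the area of the $g$-minimizer ${\rm area}_g(\Pi_m)$ and its (subsequential) limit $\alpha \leq {\rm Area}_{\rm Id}(g/g_0)$, whereas you work directly with $A_m = {\rm area}_g({\bf D}(\partial\phi_m))$, whose normalized limit is exactly ${\rm Area}_{\rm Id}(g/g_0)$ — either bookkeeping gives the same inequality.
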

	\begin{proof}
		After passing to a subsequence, set
		$$\alpha:=\lim_{m\to\infty}\frac{{\rm area}_g(\Pi_m)}{4\pi(l_m-1)}.$$
		Given $\delta$ we have that for all $m$ sufficiently large
		$${\rm area}_g(\Pi_m)\leq (\alpha+\delta)4\pi(l_m-1).$$
		Let ${\bf D}_m^k$, $l_m^k$, and $\Pi_m^k\in {S_{1/m,\bar\mu}(M)}$ denote, respectively, a $k$-cover of  ${\bf D}(\partial\phi_m)$,  its genus, and  its homotopy class.
		From the inequality above we have for all $k\in\N$ and all $m\in\N$ sufficiently large
		\begin{equation}\label{bound.below.area}
			\frac{{\rm area}_g(\Pi_m^k)}{4\pi(l_m^k-1)}\leq \frac{k\,{\rm area}_g(\Pi_m)}{4\pi(l_m^k-1)}=\frac{{\rm area}_g(\Pi_m)}{4\pi(l_m-1)}\leq \alpha+\delta.
		\end{equation}
		From the   M\"uller-Puchta's formula (see \cite[Section 4]{cal-marques-neves}) there is $c(m)>0$ so that  ${\bf D}(\partial\phi_m)$ has at least $(c(m)l_m^k)^{2l_m^k}$ distinct covers of degree less than or equal to $k$.  Thus, if we choose $L_m^k$ so that $4\pi(L_m^k-1)= (\alpha+\delta)4\pi(l^k_m-1)$, we have from \eqref{bound.below.area} that for all $k\in \N$ and all $m\in\N$ sufficiently large
		$$\#\{\text{area}_g(\Pi)\leq 4\pi(L_m^k-1):\Pi\in {S_{1/m,\bar\mu}(M)}\}\geq (c(m)l_m^k)^{2l_m^k}.$$
		Hence for all $m$ sufficiently large
		\begin{multline*}
			\lim_{k\to\infty}\frac{\ln \#\{\text{area}_g(\Pi)\leq 4\pi(L_m^k-1):\Pi\in{S_{1/m,\bar\mu}(M)}\}}{L_m^k\ln L_m^k}\\
			\geq \lim_{k\to\infty}\frac{2l_m^k\ln(c(m)l_m^k)}{L_m^k\ln L_m^k}=\frac{2}{\alpha+\delta}.
		\end{multline*}
		From  the expression for ${E_{\bar\mu}(g)}$ in \eqref{asymp.area} we obtain that
		$(\alpha+\delta)E_{\bar\mu}(g)\geq 2$. Making $\delta\to 0$ we deduce $\alpha {E_{\bar\mu}(g)}\geq 2$. From Proposition \ref{compute} we have
		$$ {\rm Area}_{{\rm Id}}(g/g_0)=\lim_{m\to\infty}\frac{{\rm area}_g({\bf D}(\partial\phi_m))}{2\pi|\chi({\bf D}(\partial\phi_m))|}\geq \lim_{m\to\infty}\frac{{\rm area}_g(\Pi_m)}{4\pi(l_m-1)}=\alpha$$
		and thus ${\rm Area}_{{\rm Id}}(g/g_0) {E_{\bar\mu}(g)}\geq 2$.
	\end{proof}
	
	\subsection{Proof of rigidity} Suppose that ${\rm Area}_{{\rm Id}}(g/g_0) {E_{\bar\mu}(g)}=2$. In this case, the proof of the preceding  proposition implies that
	
	\begin{equation}\label{rigidity.area}
		\lim_{m\to\infty}\frac{{\rm area}_g({\bf D}(\partial\phi_m))}{{\rm area}_g(\Pi_m)}=1.
	\end{equation}
	The mean curvature of a surface in $(M,g)$  is denoted by $H_g$.
	\begin{lemm}
		$\lim_{m\to\infty}\frac{1}{{\rm area}_g({\bf D}(\partial\phi_m))}\int_{{\bf D}(\partial\phi_m)}|H_g|^2dA_g=0.$
	\end{lemm}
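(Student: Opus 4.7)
The plan is variational: since $\Sigma_g(\Pi_m)$ minimizes area in its homotopy class, a short normal perturbation of ${\bf D}(\partial\phi_m)$ along its mean curvature vector cannot push the area below ${\rm area}_g(\Pi_m)$; combined with \eqref{rigidity.area}, this will force $\int|H_g|^2 dA_g$ to be $o({\rm area}_g({\bf D}(\partial\phi_m)))$.

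First I would establish uniform (in $m$) $C^k$ bounds on $H_g$ along ${\bf D}(\partial\phi_m)$. Every point of ${\bf D}(\partial\phi_m)$ has a neighborhood that is isometric, with respect to $g_0$, to a piece of a totally geodesic $\H^2\subset\H^3$, so these surfaces have uniformly bounded local geometry in $(M,g_0)$. The second fundamental form of ${\bf D}(\partial\phi_m)$ in $(M,g)$ is then, pointwise, a smooth universal expression in the fixed tensor $g-g_0$ and its covariant derivatives along the surface, giving $\|H_g\|_{C^k({\bf D}(\partial\phi_m))}\le C_k$ independently of $m$.

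Next I would consider the normal deformation ${\bf D}_{m,t}$ of ${\bf D}(\partial\phi_m)$ with initial velocity equal to the mean curvature vector (with the sign convention so that area decreases to first order). By the uniform bounds above, there is $t_0>0$ independent of $m$ such that for $t\in[0,t_0]$, ${\bf D}_{m,t}$ is a smooth immersion in $\Pi_m$, and the first variation of area together with a Taylor expansion yields a constant $C$ (independent of $m$ and $t$) with
$${\rm area}_g({\bf D}_{m,t})\le {\rm area}_g({\bf D}(\partial\phi_m))-t\int|H_g|^2\,dA_g+Ct^2\,{\rm area}_g({\bf D}(\partial\phi_m)).$$
Since ${\bf D}_{m,t}\in\Pi_m$ we have ${\rm area}_g({\bf D}_{m,t})\ge{\rm area}_g(\Pi_m)$; setting $\eta_m:=1-{\rm area}_g(\Pi_m)/{\rm area}_g({\bf D}(\partial\phi_m))$, which tends to $0$ by \eqref{rigidity.area}, rearrangement gives
$$\frac{\int|H_g|^2\,dA_g}{{\rm area}_g({\bf D}(\partial\phi_m))}\le\frac{\eta_m}{t}+Ct.$$
For any $\varepsilon>0$, choosing $t=\min(t_0,\varepsilon/(2C))$ and then $m$ large enough that $\eta_m\le\varepsilon t/2$ makes the right-hand side $\le\varepsilon$, which is the desired conclusion.

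The main obstacle is the uniform $C^k$ control of $H_g$ in the first step: it is this control that makes the time interval of validity of the variational estimate independent of $m$ and that bounds the Taylor remainder uniformly. Once those bounds are in place, the rest of the argument is a routine variational comparison using \eqref{rigidity.area} and the minimizing property of $\Pi_m$.
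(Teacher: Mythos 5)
Your argument is correct and arrives at the same conclusion via a route closely related to, but a bit more elementary than, the paper's. Both proofs rest on the same two pillars: (1) the surfaces ${\bf D}(\partial\phi_m)$ are totally geodesic for $g_0$ and hence have uniformly bounded geometry in $(M,g)$, so $H_g$ and its derivatives along them are bounded independently of $m$; and (2) no deformation within $\Pi_m$ can push the $g$-area below ${\rm area}_g(\Pi_m)$, while \eqref{rigidity.area} says this threshold is asymptotically equal to ${\rm area}_g({\bf D}(\partial\phi_m))$. The paper implements this by running mean curvature flow for a short, $m$-independent time (invoking Ecker--Huisken for short-time existence with uniform bounds) and combining the evolution equation for $H_g$ with the first variation formula to derive a differential inequality ${\rm area}_g({\bf D}_m(t))\leq e^{-\delta t_1}{\rm area}_g({\bf D}(\partial\phi_m))$, contradicting \eqref{rigidity.area}. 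You instead use a single normal deformation with initial velocity the mean curvature vector and a second-order Taylor expansion of the area functional, which avoids any parabolic machinery; the price is that you must control the second variation uniformly over the (uniform) time interval, but this follows from the same $C^k$ bounds you already need for the flow approach. In short, you replace "flow for a definite time and integrate a differential inequality" by "perturb once and Taylor expand"; both are valid, and yours is the more self-contained of the two.

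One point worth spelling out to make the Taylor step airtight: the remainder term requires a uniform bound on $\frac{d^2}{dt^2}{\rm area}_g({\bf D}_{m,t})$ over all $t\in[0,t_0]$, not just at $t=0$. This second derivative involves $\Delta_{\Sigma_t}$ of the normal speed, as well as $|A_t|^2$ and ${\rm Ric}_g(\nu_t,\nu_t)$ along the evolved surface, so you genuinely need the uniform $C^2$ (not just $C^0$) control of $H_g$ and the uniform bound on $|A|$ that you obtain from the totally geodesic structure of ${\bf D}(\partial\phi_m)$ in $g_0$. You flag this at the end of your write-up, which is the right instinct; once that uniformity is recorded, the rest is exactly as you say.
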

	\begin{proof}
		Suppose that, after passing to a subsequence, there is $\delta>0$ so that
		\begin{equation}\label{contradiction}
			\int_{{\bf D}(\partial\phi_m)}|H_g|^2dA_g\geq 2\delta\, {\rm area}_g({\bf D}(\partial\phi_m))
		\end{equation}
		for all $m\in\N$.
		
		The second fundamental form of ${\bf D}(\partial\phi_m)$ (with respect to $g$) and any of its derivatives are uniformly bounded independently of $m$. Arguing like in \cite[Corollary 4.4]{ecker-huisken} we find  $t_0>0$, $C_0>0$, and $\{{\bf D}_m(t)\}_{0\leq t\leq t_0}$ a solution to mean curvature flow with initial condition ${\bf D}_m(0):={\bf D}(\partial\phi_m)$ so that both the mean curvature of ${\bf D}_m(t)$ and its derivative  is bounded uniformly by $C_0$. 
		
		With $H_g(t)$, $A_g(t)$, and $\nu_t$ denoting, respectively, the mean curvature, second fundamental form, and normal vector of ${\bf D}_m(t)$, we have  the evolution equation
		$$\frac{d}{dt}H_g(t)= \Delta H_g(t)+(|A_g(t)|^2+Rc(g)(\nu_t,\nu_t))H_g(t).$$
		Thus there is a constant $C_1>0$ (depending only on $C_0$ and $g$) so that for all $m\in \N$ and $0\leq t\leq t_0$
		$$\frac{d}{dt}\int_{{\bf D}_m(t)}|H_g(t)|^2dA_g \geq -C_1{\rm area}_g({\bf D}_m(t))\geq -C_1{\rm area}_g({\bf D}(\partial\phi_m)).$$
		Choose $t_1<\min\{{\delta/C_1}, t_0\}$. From \eqref{contradiction} and the inequality  above we have that for all  $m\in \N$ and $0\leq t\leq t_1$
		$$\int_{{\bf D}_m(t)}|H_g|^2dA_g\geq \delta\,{\rm area}_g({\bf D}(\partial\phi_m))\geq \delta\, {\rm area}_g({\bf D}_m(t)).$$
		Therefore
		$$ \frac{d}{dt}{\rm area}_g({\bf D}_m(t))=-\int_{{\bf D}_m(t)}|H_g|^2dA_g\leq -\delta\, {\rm area}_g({\bf D}_m(t))$$
		and thus for all $m\in \N$ we have
		$${\rm area}_g(\Pi_m)\leq  {\rm area}_g({\bf D}_m(t_1))\leq e^{-\delta t_1}{\rm area}_g({\bf D}(\partial\phi_m)).$$
		This contradicts \eqref{rigidity.area}. 
	\end{proof}
	
	Recall that $\Omega^*\delta_{\phi_m}$ converges to ${\bar \mu}$ as $m\to\infty$.  The previous lemma allows us to apply Theorem \ref{CMN.thm} to  conclude  the following:
	
	For every $\gamma\in\mathcal C_0$, there is a lift $D_m\subset \H^3$  of ${\bf D}(\partial\phi_m)\subset M$ so that, after passing to a subsequence, 
	$D_m$  converges on compact sets to the totally geodesic disc $D(\gamma)\subset \H^3$ and
	$$
	\lim_{m\to\infty}\int_{D_m\cap B_R(0)}|H_g|^2dA_{g_0}=0\quad\mbox{for all }R>0.
	$$
	Thus we obtain that $D(\gamma)$ is a minimal disc for the metric $g$. The arbitrariness of $\gamma$ implies that every totally geodesic disc of $\H^3$ is minimal with respect to $g$. 
	
	The next theorem follows from adapting some of the arguments  in \cite{amb-marques-neves}. 
	\begin{thm} Let $g$ be a metric on $M$ with property that every totally geodesic disc with respect to $g_0$ is minimal with respect to $g$. There is a constant   $c>0$ so that $g=cg_0$.
	\end{thm}
	\begin{proof}

		We denote the space of Killing symmetric $2$-tensors on a Riemannian manifold $(X,h)$ by $ K_2(X,h)$. They are characterized by the property (see \cite[Section 1]{Tak}) that a symmetric $2$-tensor $k$ lies in $K_2(X,h)$ if and only if for every geodesic $\gamma \subset X$ the  function below is constant
		$$t\mapsto k(\gamma'(t),\gamma'(t)).$$
		Given a symmetric $2$-tensor $k$ on $X$, we denote the absolute value of its determinant with respect to $h$ by $|k|_h$.
		
		Let $\delta$ denote the Euclidean metric in $\R^3$ and $B\subset \R^3$ denote the open unit ball. Consider the Beltrami-Klein model $(B,g_{\rm hyp})$ for $(\H^3,g_0)$. The important property of this model is that the image of geodesics and totally geodesic discs in $(B,g_{\rm hyp})$  is the same as affine lines and affine planes in $(B,\delta).$  
		
		In  \cite[Theorem 9.8]{amb-marques-neves} the authors classified all metric $h$ on $S^n$  for which every totally geodesic hypersphere is minimal with respect to $h$. They are those for which
		$$|h|_{g_{\rm round}}^{-2/(n+1)}h\in K_2(S^n,g_{\rm round}).$$
		A similar reasoning, which we repeat for the sake of completeness, gives
		\begin{prop}
			A positive definite symmetric $2$-tensor $h$ on $B$ has the property that every affine plane   is minimal with respect to $h$ if and only if  $$|h|_{g_{\rm hyp}}^{-1/2}h\in K_2(B,g_{\rm hyp}).$$
		\end{prop}
		\begin{proof}
			We will use the following theorem of Hangan \cite{Han1} :
			A metric $h$ on $B$ is such that all affine planes have zero mean curvature with respect to $h$ if and only if
			$$|h|_{\delta}^{-1/2}h\in K_2(B,\delta).$$
			\begin{lemm} With $k$ a $2$-tensor on $B$ then
				$$k\in K_2(B,g_{\rm hyp})\iff  |g_{\rm hyp}|_{\delta}^{-1/2}k\in K_2(B,\delta)$$
			\end{lemm}
			\begin{proof}
				From Hangan's theorem we have that $|g_{\rm hyp}|_{\delta}^{-1/2}g_{\rm hyp}\in K_2(B,\delta)$ and hence,  for every Euclidean geodesic $\gamma\subset B$, we  have some constant $c_0=c_0(\gamma)$ such that for all $t$
				$$g_{\rm hyp}(\gamma'(t),\gamma'(t))=|g_{\rm hyp}|_{\delta}^{1/2}(\gamma(t))c_0.$$
				Note that geodesics with respect to $g_{\rm hyp}$ are also straight lines. Hence  if  we perform a change of variable according to
				$$\frac{dt}{ds}(s)=|g_{\rm hyp}|_{\delta}^{-1/4}(\gamma(t(s)))$$ we have that $s\mapsto \sigma(s):=\gamma(t(s))$ is a geodesic for $g_{\rm hyp}$.  Therefore
				$$
				\frac{d}{ds}k(\sigma'(s),\sigma'(s))=\frac{dt}{ds}(s)\frac{d}{dt}_{t=t(s)}\left(\frac{k(\gamma'(t),\gamma'(t))}{|g_{\rm hyp}|_{\delta}^{1/2}(\gamma(t))}\right)
				$$
				and so
				$$
				\frac{d}{ds}k(\sigma'(s),\sigma'(s))=0 \iff \frac{d}{dt}_{t=t(s)}\left(\frac{k(\gamma'(t),\gamma'(t))}{|g_{\rm hyp}|_{\delta}^{1/2}(\gamma(t))}\right)=0.
				$$
				Thus $k$ is constant along hyperbolic geodesics if and only if  $|g_{\rm hyp}|_{\delta}^{-1/2}k$ is constant along Euclidean geodesics, which implies the result.
			\end{proof}
			Using this lemma and the identity $|h|_{\delta}=|h|_{g_{\rm hyp}}|g_{\rm hyp}|_{\delta}$ we have
			$$ |h|_{\delta}^{-1/2}h\in K_2(B,\delta) \iff |h|_{g_{\rm hyp}}^{-1/2}h\in K_2(B,g_{\rm hyp}).$$
			Hangan's theorem \cite{Han1} implies the desired result.
		\end{proof}

		Denote the lift of the metric $g$ to $\H^3$ by $g$ as well. We are assuming that every totally geodesic disc in $\H^3$ is minimal with respect to $g$ and  so the previous proposition implies that  $G:=|g|_{g_0}^{-1/2}g\in  K_2(M,g_0)$.  
		
		The geodesic flow in $(M,g_0)$ is ergodic and so we can choose a geodesic $\sigma\subset M$ which is dense in the unit tangent bundle. From the fact that $t\mapsto G(\sigma'(t),\sigma'(t))$ is constant we deduce the existence of a constant $\alpha$ so that  $G(Y,Y)=\alpha g_0(Y,Y)$ for every vector  field $Y$. This implies that $G=\alpha g_0$. Using the fact that $G=|g|_{g_0}^{-1/2}g$ we deduce that $g=\alpha^{-2}g_0$.

	\end{proof}

	\appendix
	
	\section{Auxiliary Results}
	
	\subsection{Asymptotic Expansion}
	
	With $g\in \mathcal U$, set $h:=g-g_0$. Without loss of generality we assume that $|h|_{C^4}\leq 1$.  
	
	Given $\gamma\in \mathcal C_{\bar \varepsilon}$, Recall that $\nu_g(\gamma)$ denotes a  continuous unit normal vector field along $\Sigma_g(\gamma)$ with respect to $g$ and $n(\gamma)$ denotes a  continuous unit normal vector field along $D(\gamma)$ with respect to $g_0$.
	
	Recall the diffeomorphism (using the hyperboloid model)
	\begin{equation*}
		F_{\gamma}: D(\gamma)\rightarrow \Sigma_{g}(\gamma),\quad x\mapsto \cosh(f_{\gamma}(x))x+\sinh(f_{\gamma}(x))n(\gamma)(x).
	\end{equation*}

	Given $p\in M$ and $k\in \N_0$, $l\in \N$, we denote by $O_p^k(|h|^l)$ any quantity for which there is a constant $\alpha_{k,l}$ (independent of $p$ and $g\in \mathcal U$) so that its absolute value at $p$ is bounded by $\alpha_{k,l}\sum_{j=0}^k|\nabla^j h|^l(p)$.  Likewise, given $\gamma\in\mathcal C_{\bar \varepsilon}$ and $x\in D(\gamma)$, we denote  by $O_x^k(|f_{\gamma}|^l)$ any quantity for which there is a constant $\beta_{k,l}$ (independent of $x$, $\gamma$, and $g\in \mathcal U$) so that its absolute value at $x$ is bounded by $\alpha_{k,l}\sum_{j=0}^k|\nabla^j f_{\gamma}|^l(x)$. 
	
	Let $L$ denote the linearization at $g_0$ of 
	$$h\mapsto \mathring{Ric}(g_0+h):=Ric(g_0+h)-\frac{R(g_0+h)}{3}(g_0+h).$$
	
	\begin{prop}\label{asymptotic.expn}
		With $\gamma\in \mathcal C_{\varepsilon}$, $x\in D(\gamma)$ and $y=F_{\gamma}(x)$ we have
		\begin{multline*}Ric(g)_y(\nu_g(\gamma),\nu_g(\gamma))-\frac{1}{3}R(g)(y)=L(h)_x(n(\gamma),n(\gamma))\\ +O^3_x(|h|^2)+O^1_x(|f_{\gamma}|^2).
		\end{multline*}
		With $|Jac_g F_{\gamma}|$ the Jacobian of $F_{\gamma}:(D(\gamma),g_0)\rightarrow (\Sigma_{g}(\gamma),g)$ we have
		$$|Jac_g F_{\gamma}|(x)=1+O_x^1(|h|)+O_x^1(|f_{\gamma}|).$$
	\end{prop}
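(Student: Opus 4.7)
The plan is to handle the two identities separately, in both cases starting from the fact that $g_0$ is hyperbolic (hence Einstein, so $\mathring{\mathrm{Ric}}(g_0)=0$) and that $F_\gamma$ reduces to the identity when $h=0$ and $f_\gamma=0$.

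For the first identity, since $|\nu_g(\gamma)|_g=1$ one rewrites $\mathrm{Ric}(g)_y(\nu_g,\nu_g)-R(g)(y)/3$ as the pointwise value of the traceless Ricci tensor $\mathring{\mathrm{Ric}}(g)_y(\nu_g,\nu_g)$. I would then perform three Taylor expansions and control each error term. First, expand $\mathring{\mathrm{Ric}}$ around $g_0$: since $\mathring{\mathrm{Ric}}(g_0)=0$, one gets $\mathring{\mathrm{Ric}}(g_0+h)=L(h)+Q(h)$ with a quadratic remainder $Q(h)$ whose pointwise size is bounded by $\sum_{j\le 2}|\nabla^j h|^2$, i.e. $Q(h)=O^2_x(|h|^2)\subseteq O^3_x(|h|^2)$. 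Second, transfer the evaluation point from $y=F_\gamma(x)$ back to $x$: the hyperbolic distance between the two is bounded by a constant times $|f_\gamma(x)|$, and $|\nabla \mathring{\mathrm{Ric}}(g)|$ is pointwise $O\bigl(\sum_{j\le 3}|\nabla^j h|\bigr)$, so the basepoint error is of the form $|f_\gamma|\cdot O^3(|h|)$. By the AM--GM inequality this is absorbed into $O^1_x(|f_\gamma|^2)+O^3_x(|h|^2)$. Third, change the vector from $\nu_g(\gamma)(y)$ to $n(\gamma)(x)$: up to parallel transport in $g_0$, these differ by $O(|\nabla f_\gamma|)$ (the tangent plane of $\Sigma_g(\gamma)$ tilts relative to $D(\gamma)$ by the gradient of the graph function) plus $O(|h|)$ (the cost of renormalizing with $g$ instead of $g_0$). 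Expanding the bilinear form $\mathring{\mathrm{Ric}}(g)_y(\cdot,\cdot)=O^2(|h|)$ against this difference gives cross terms of size $|h|\cdot|h|$ and $|h|\cdot|\nabla f_\gamma|$, again absorbed by AM--GM into $O^1_x(|f_\gamma|^2)+O^3_x(|h|^2)$.

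For the second identity, pick a $g_0$-orthonormal basis $\{e_1,e_2\}$ of $T_xD(\gamma)$ and compute
\[
dF_\gamma(v)=\cosh(f_\gamma)\,v+\sinh(f_\gamma)\,\nabla_v n(\gamma)+\bigl(\sinh(f_\gamma)\,x+\cosh(f_\gamma)\,n(\gamma)\bigr)\,v(f_\gamma).
\]
Because $D(\gamma)$ is totally geodesic in $(\H^3,g_0)$, $\nabla_v n(\gamma)$ is tangent to $D(\gamma)$. The Jacobian is $\sqrt{\det(g(dF_\gamma e_i,dF_\gamma e_j))}$, with $g=g_0+h$. Viewed as a smooth function of the three independent small parameters $(h,f_\gamma,\nabla f_\gamma)$, this quantity equals $1$ when all three vanish (since then $F_\gamma=\mathrm{id}$ and $g=g_0$), so Taylor's theorem yields
\[
|\mathrm{Jac}_g F_\gamma|(x)=1+O^1_x(|h|)+O^1_x(|f_\gamma|).
\]

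The main obstacle is the bookkeeping in steps (b) and (c) of the first identity: several mixed terms appear ($|f_\gamma|\cdot|\nabla^j h|$ for $j\le 3$, and $|\nabla f_\gamma|\cdot|\nabla^j h|$ for $j\le 2$), and one must verify that each is absorbable by AM--GM into an $O^1_x(|f_\gamma|^2)$ plus an $O^k_x(|h|^2)$ with $k\le 3$, matching the derivative orders declared in the statement. Once that accounting is done, both expansions follow routinely from smoothness together with the vanishings $\mathring{\mathrm{Ric}}(g_0)=0$ and $F_\gamma|_{f_\gamma=0}=\mathrm{id}$.
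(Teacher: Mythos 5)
Your proposal is correct and follows essentially the same route as the paper: pass to the traceless Ricci tensor, linearize around $g_0$ using $\mathring{\mathrm{Ric}}(g_0)=0$, separately account for the change of basepoint $x\mapsto y$ and the change of normal vector $n(\gamma)(x)\mapsto\nu_g(\gamma)(y)$, and absorb the resulting mixed terms by the AM--GM inequality into $O^3_x(|h|^2)+O^1_x(|f_\gamma|^2)$. The paper packages the basepoint shift and the $g_0$-normal tilt into a single one-parameter family $t\mapsto F(\gamma,tf)$ with an auxiliary function $\alpha(t)$ (yielding the intermediate estimate $|\nabla^k h|(y)=O^{k+1}_x(|h|)+O^1_x(|f|)$ before applying it to $G=L(h)$), but this is a bookkeeping choice rather than a genuinely different argument, and your treatment of the Jacobian is likewise the same smooth-dependence/Taylor observation the paper uses.
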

	
	\begin{proof}
		Set $$X:=\{(\gamma,f):\gamma\in\mathcal C_{\bar \varepsilon}, f\in C^2(D(\gamma)), |f|_{C^2}\leq 1\}.$$
		Given $(\gamma,f)\in X$ set
		\begin{equation*}
			F(\gamma,f): D(\gamma)\rightarrow \Sigma_{g}(\gamma),\quad x\mapsto \cosh(f(x))x+\sinh(f(x))n(\gamma)(x)
		\end{equation*}
		and $\Sigma(\gamma,f):=F(\gamma,f)(D(\gamma))$. Let $n(\gamma,f)$ be the unit normal vector field with respect to $g_0$ along $\Sigma(\gamma,f)$, defined so that $n(\gamma,f)$ depends smoothly on $(\gamma,f)$ and $n(\gamma,0)=n(\gamma)$.
		We have for all $0\leq t\leq 1$
		$$\left|\frac{d^k}{(dt)^k}(n(\gamma,tf)(F(\gamma,tf)(x))\right|=O_x^1(|f|^k), \quad k=1,2.$$
		Hence, if $G$ is a $2$-tensor on $M$ with $|G|_{C^2}\leq 1$, then
		$$\alpha(t):=G(n(\gamma,tf)(F(\gamma,tf)(x)),n(\gamma,tf)(F(\gamma,tf)(x)))$$
		satisfies 
		$$|\alpha'(0)|\leq (|\nabla G|(x)+|G|(x))O_x^1(|f|)\quad\mbox{and}\quad \sup_{0\leq t\leq 1}|\alpha''(t)|=O_x^1(|f|^2).$$
		Therefore, with $y=F(\gamma,f)(x)$, we obtain from Taylor's expansion
		\begin{multline}\label{taylor.epn}
			|G(n(\gamma,f)(y),n(\gamma,f)(y))-G(n(\gamma)(x),n(\gamma)(x))|\\
			\leq |\nabla G|^2(x)+|G(x)|^2+O_x^1(|f|^2).
		\end{multline}
		Setting $G=|\nabla^k h|^2g_0$ in this inequality we deduce that for  $k=0,1,2.$
		\begin{equation}\label{base.point}
			|\nabla^k h|(y)=O^{k+1}_x(|h|)+O_x^1(|f|).
		\end{equation}
		Setting  $G=L(h)$ in \eqref{taylor.epn} and using the fact that $L$ is a second order differential operator we deduce
		\begin{multline}\label{expn.Lh}
			|L(h)_x(n(\gamma)(x),n(\gamma)(x))-L(h)_y(n(\gamma,f)(y),n(\gamma,f)(y))|\\
			=O_x^3(|h|^2)+ O_x^1(|f|^2).
		\end{multline}
		Let $\nu_g(\gamma,f)$ denote the unit normal vector field along $\Sigma(\gamma,f)$ with respect to $g=g_0+h$ so that $\nu_{g_0}(\gamma,f)=n(\gamma,f)$ and $\nu_g$ depends smoothly on its parameters. Then
		$$|\nu_g(\gamma,f)(y)-n(\gamma,f)(y)|=O_y^0(|h|)=O_x^1(|h|)+ O_x^1(|f|),$$
		where in the last identity we used \eqref{base.point}. Using this identity and \eqref{base.point}  we have
		\begin{multline*}
			L(h)_y(\nu_g(\gamma,f)(y),\nu_g(\gamma,f)(y))\\
			=L(h)_y(n(\gamma,f)(y),n(\gamma,f)(y))+O_x^3(|h|^2)+ O_x^1(|f|^2).
		\end{multline*}
		Combining with \eqref{expn.Lh} we deduce
		\begin{multline*}
			L(h)_y(\nu_g(\gamma,f)(y),\nu_g(\gamma,f)(y))\\
			=L(h)_x(n(\gamma)(x),n(\gamma)(x))+O_x^3(|h|^2)+ O_x^1(|f|^2).
		\end{multline*}
		Using Taylor's expansion  we have  that for every $y\in M$, every unit vector field $Y\in T_yM$ and $g_0+h\in \mathcal U$
		$$|\mathring{Ric}(g_0+h)_y(Y,Y)-L(h)_y(Y,Y)|=O_y^2(|h|^2)=O_x^3(|h|^2)+O_x^1(|f|^2).$$
		Therefore
		\begin{multline*}
			\mathring{Ric}(g_0+h)_y(\nu_g(\gamma,f)(y),\nu_g(\gamma,f)(y))\\
			=L(h)_y(n(\gamma)(x),n(\gamma)(x))+O_x^3(|h|^2)+ O_x^1(|f|^2).
		\end{multline*}
		The first statement in the proposition follows from choosing $f=f_\gamma$ in the identity above.
		
		We now prove the statement regarding $|Jac_g F_{\gamma}|$. With $(\gamma,f)\in X$,  denote by $|Jac_g F(\gamma,f)|(g)$ the Jacobian of $F(\gamma,f):(D(\gamma),g_0)\rightarrow (\Sigma(\gamma,f),g)$. With $g=g_0+h\in \mathcal U$ and $y=F(\gamma,f)(x)$ we have
		\begin{align*}
			|Jac_g F(\gamma,f)|(x) & =|Jac_{g_0}F(\gamma,f)|(x)+ O^0_y(|h|)\\
			&=|Jac_{g_0}F(\gamma,f)|(x)+ O^1_x(|h|)+O^1_x(|f|)\\
			&=|Jac_{g_0}F(\gamma,0)|(x)+O^1_x(|h|)+O^1_x(|f|)\\
			&=1+O^1_x(|h|)+O^1_x(|f|).
		\end{align*}

	\end{proof}

	\subsection{Stability of Ricci flow}
	
	We assume we have a solution to normalized Ricci flow $(\bar g_t)_{t\geq 0}$ \eqref{RF} and a smooth family of diffeomorphisms $\{\Phi_t\}_{t\geq 0}$  converging strongly to some diffeomorphism $\Phi_{\infty}$so that  $g_t:=\Phi_t^*\bar g_t$ solves the DeTurck-modified Ricci flow (which is strictly parabolic) and converges to the hyperbolic metric $g_0$ as $t\to\infty$.

	{Recall the operator $\mathcal A$ defined in \eqref{deturck}.} It has discrete spectrum {$1\leq \lambda_1\leq\lambda_2\leq \ldots $} %{the spectrum was positive before which was a typo}  (see \cite{knopf} for instance).

	\begin{prop}\label{ricci-stable} 
		Set $h_t:=\Phi^*_t\bar g_t-g_0$, $t\geq 0$. Then 
		
		\begin{itemize}
			\item $|h_t|_{C^4}\leq O(e^{-2t/3})$;
			\item  {For all $k\in\N$ the tensor} $e^{t}h_t$ converges in $W^{k,2}$ as $t\to\infty$ to $\bar h$, where $\bar h$ is  a smooth $2$-tensor with $\mathcal A(\bar h)=-\bar h$.
		\end{itemize}
	\end{prop}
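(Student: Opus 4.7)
The strategy is to analyze the DeTurck-modified Ricci flow, which is strictly parabolic, near its fixed point $g_0$, relying on the spectral gap of its linearization $\mathcal A$ and a nonlinear perturbation bootstrap. The starting point is to write the equation satisfied by $h_t := g_t - g_0$ in the form
$$\partial_t h_t = -\mathcal A h_t + Q(h_t),$$
where $\mathcal A$ is the operator from \eqref{deturck} (with the sign convention that makes the flow dissipative on the relevant spectral subspace) and $Q(h_t)$ collects the nonlinear remainders. A routine expansion shows $Q$ vanishes to at least second order at $h=0$, so that $|Q(h_t)|_{L^2}\leq C|h_t|_{C^2}|h_t|_{L^2}$, with analogous bounds in higher norms, whenever $|h_t|_{C^2}$ is small.

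For the $C^4$ decay bound, I would use as a seed the exponential convergence $g_t\to g_0$ in $C^{2,\alpha}$ from \cite{knopf} already invoked earlier. Testing the evolution against $h_t$ in $L^2$ and using the spectral gap to control the bilinear form from below gives
$$\tfrac{d}{dt}|h_t|_{L^2}^2 \leq -2|h_t|_{L^2}^2 + C|h_t|_{C^2}\,|h_t|_{L^2}^2,$$
so Gr\"onwall yields $|h_t|_{L^2} = O(e^{-(1-\varepsilon)t})$ for every small $\varepsilon>0$ and $t$ large. Parabolic Schauder estimates applied to the strictly parabolic DeTurck equation then upgrade this to $|h_t|_{C^4} = O(e^{-(1-\varepsilon)t})$; taking $\varepsilon = 1/3$ yields the first bullet.

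For the asymptotic expansion, I would perform a spectral decomposition of $\mathcal A$: let $E$ be the eigenspace corresponding to the slowest-decaying mode (so that $\mathcal A$ acts by $-1$ on $E$ in the paper's convention), let $P$ be the orthogonal projection onto $E$, and decompose $h_t = Ph_t + P^\perp h_t$. Projecting the evolution onto $E$ gives
$$\partial_t(e^t Ph_t) = e^t PQ(h_t).$$
By the first bullet, $|Q(h_t)|_{L^2} = O(e^{-4t/3})$, so the right-hand side has $L^2$-norm $O(e^{-t/3})$, which is integrable on $[1,\infty)$. Hence $e^t Ph_t$ converges in $L^2$ to a limit $\bar h \in E$, which by construction satisfies $\mathcal A \bar h = -\bar h$. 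On the complement, the spectral gap forces $|e^t P^\perp h_t|_{L^2}\to 0$. To promote $L^2$ convergence to $W^{2,2}$ convergence of $e^t h_t$, I would combine the uniform bound on $|e^{(1-\varepsilon)t}h_t|_{C^4}$ (valid for $\varepsilon=1/3$) with Gagliardo--Nirenberg interpolation between $L^2$ and $C^4$.

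The main obstacle is closing the two simultaneous bootstraps---decay rate against regularity, and two-derivative-order nonlinear terms against spectral decay---without losing exponential factors. This relies crucially on the strict parabolicity of the DeTurck modification and the genuinely quadratic nature of $Q$; in particular, the spectral projection argument only succeeds because the $C^4$ decay of $h_t$ is strong enough that $Q(h_t)$ decays in $L^2$ strictly faster than $e^{-t}$.
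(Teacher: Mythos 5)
Your strategy for the first bullet is more elaborate than the paper's (the paper simply cites \cite{knopf} for the $C^2$ rate $O(e^{-2t/3})$ and then invokes \cite{haslhofer}-type higher-order estimates for $C^4$), but it is morally sound. One issue worth flagging: you assert $|Q(h_t)|_{L^2}\leq C|h_t|_{C^2}|h_t|_{L^2}$, but $Q$ involves two derivatives of $h$, so the honest bound is $|Q(h_t)|_{L^2}\leq C|h_t|_{C^2}|h_t|_{W^{2,2}}$; to run the $L^2$ energy estimate one must integrate by parts and absorb the extra gradient term into the dissipative part of $\langle \mathcal A h_t,h_t\rangle$. Your spectral projection argument for identifying $\bar h$ as the limit of $e^tPh_t$ is essentially the same as the paper's decomposition $h_t=f(t)+\eta_t$ and is correct.

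The genuine gap is in the last step, promoting $L^2$ convergence of $e^th_t$ to $W^{2,2}$ convergence by Gagliardo--Nirenberg interpolation against the $C^4$ bound. With $|h_t|_{C^4}=O(e^{-2t/3})$ you only get $|e^th_t|_{C^4}=O(e^{t/3})$, while $|e^th_t|_{L^2}=O(1)$; interpolating ($W^{2,2}$ between $L^2$ and $W^{4,2}\supset C^4$) gives $|e^th_t|_{W^{2,2}}=O(e^{t/6})$, which is not even bounded, let alone convergent. The same problem persists with any rate $e^{-(1-\varepsilon)t}$ for $\varepsilon>0$: the interpolation inequality is applied to $e^th_t$, whose $C^4$ norm cannot go to zero (it should converge to $|\bar h|_{C^4}$), so no amount of improving the decay rate of $h_t$ alone closes the estimate; one needs to control higher norms of the difference $e^th_t-\bar h$, or equivalently of $\eta_t:=h_t-Ph_t$. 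The paper does precisely that by a simple trick you are missing: since $\mathcal A$ commutes with itself, the tensor $\eta_t^1:=\mathcal A(\eta_t)$ satisfies the same type of evolution equation $\partial_t\eta_t^1=\mathcal A(\eta_t^1)+O(e^{-4t/3})$ and is still orthogonal to the $1$-eigenspace, so the identical $L^2$ energy estimate with spectral gap gives $|e^t\mathcal A(\eta_t)|_{L^2}\to 0$. Elliptic regularity for $\mathcal A$ then yields $|e^t\eta_t|_{W^{2,2}}\to 0$ directly, with no interpolation and no need for sharper $C^4$ rates. You should replace your Gagliardo--Nirenberg step by this argument.
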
 
	\begin{proof}
		In \cite{knopf} it is  shown that  $|h_t|_{C^{2}}\leq O(e^{-2t/3})$. Standard estimates (similar to \cite[Lemma 5.3]{haslhofer}) show that for all $k\in\N$, $|Ric(\bar g_t)+2\bar g_t|_{C^{k}}\leq O^{{k}}(e^{-2t/3})$ and thus $|\bar g_t-\bar g_{+\infty}|_{C^{{k}}}\leq O^{{k}}(e^{-2t/3})$ as well.  The diffeomorphisms $\{\Phi_t\}_{t\geq 0}$ depend only on $g_0$ and $\{\bar g_t\}_{t\geq 0}$, and one can check that for all $k\in\N$, $|\Phi_t-\Phi_{+\infty}|_{C^{k}}\leq O^{{k}}(e^{-2t/3})$. Thus  $|h_t|_{C^{{k}}}\leq O^{{k}}(e^{-2t/3})$.
		
		In \cite{knopf} it is  also show that the tensors $h_t$  satisfy  an equation of the form
		$$\frac{dh_t}{dt}=\mathcal A(h_t)+ Q_t,$$
		where $Q_t$ is a non-linear term depending on $g_0, h_t, \nabla h_t,\nabla^2h_t$.  The important property we need is that $|Q_t|_{C^k}\leq O^{{k}}(|h_t|_{C^{k+2}}^2)$. From here on we consider $Q_t$ as being a fixed non-homogeneous term where $|Q_t|_{C^{{k}}}\leq O^{{k}}(e^{-4t/3})$.
		
		Consider an  $L^2$-orthonormal  basis $\{u_{j}\}_{{j}\in\N}$ for the space of symmetric $2$-tensors on $M$ made of eigentensors for $\mathcal A$. Necessarily
		
		$$h_t=\sum_{{j}=0}^{\infty}\left(e^{-\lambda_{j} t}\langle  h_0,u_{j}\rangle +\int_0^te^{\lambda_{j}(\tau-t)}\langle Q_\tau,u_{j}\rangle d\tau\right) u_{j}.$$
		{Set $\eta_t:=h_t-f(t),$ where
			$$f(t):=\sum_{\lambda_{j}=1}\left(e^{-t}\langle  h_0,u_{j}\rangle +\int_0^te^{(\tau-t)}\langle Q_\tau,u_{j}\rangle d\tau\right) u_{j}.$$
			With {$\eta_t^0:=\eta_t$ and $k\in\N$, set  $\eta^k_t=\mathcal A(\eta^{k-1}_t)$. Because $\eta^k_t$} is orthogonal to the $1$-eigentensors there is some $\delta$ so that
			$$\int_{M}\mathcal A({\eta^{k}_t})\eta^k_tdV_{g_0}\leq-(1+2\delta)\int_{M}({\eta^k_t})^2dV_{g_0}.$$
			{Moreover we have
			$\partial_t \eta^k_t=\mathcal A(\eta^k_t)+ O^k(e^{-4t/3}),$}
			which when combined with the inequality above  implies that
			\begin{multline*}
				\frac{d}{dt}\int_{M}(\eta^k_t)^2dV_{g_0}\leq -2(1+2\delta) \int_{M}(\eta^k_t)^2dV_{g_0}+O^k(e^{-4t/3}) \int_{M}\eta^k_tdV_{g_0}\\
				\leq -2(1+2\delta) \int_{M}(\eta^k_t)^2dV_{g_0}+O^k(e^{-4t/3})\left( \int_{M}(\eta^k_t)^2dV_{g_0}\right)^{1/2}\\
				\leq -2(1+\delta) \int_{M}(\eta^k_t)^2dV_{g_0}+O^k(e^{-8t/3}).
			\end{multline*}
			{Hence $\int_{M}(e^{t}\eta^k_t)^2dV_{g_0}\to 0$ as $t\to\infty$. } {From the fact that $\mathcal \eta^{k+1}_t-\Delta_{g_0}\eta^k_t$ is linear in $\eta^k_t$ we obtain that for all $k\in\N_0$ we have} 
			$$\int_{M}(e^{t}\eta^k_t)^2+(e^{t}\Delta_{g_0}\eta^k_t)^2dV_{g_0}\to 0\mbox{ as }t\to\infty.$$
			{This implies that, for all $k\in \N$, $e^{t}\eta_t$ tends to zero in $W^{k,2}$ as $t\to\infty$.} The result follows because the $1$-eigentensor $e^tf(t)$ converges  {smoothly}  as $t\to\infty$ to some smooth tensor $\bar h$.}
		%Likewise $\partial_t \eta^1_t=\mathcal A(\eta^1_t)+ O(e^{-4t/3})$, and the same argument gives $\int_{M}(e^{t}\eta^1_t)^2dV_{g_0}\to 0$. 	
		\end{proof}
	
	\subsection{A theorem of Calegari-Marques-Neves} 
	Consider a sequence $\phi_i\in\mathcal F(M,1/i)$ equivariant  with respect to a representation of a Fuchsian subgroup of $\text{PSL}(2,\R)$ in $\pi_1(M)<\text{PSL}(2,\C)$. Let $G_i<\pi_1(M)$ be the image of that representation. The group $G_i$  preserves $D(\partial\phi_i)\subset \H^3$ and recall that ${\bf D}(\partial \phi_i)={D}(\partial \phi_i)/ G_i.$ We assume
	
	\begin{itemize}
		\item $\Omega_*\delta_{\phi_i}$ converges weakly to a measure $\nu$ on $F(M)$ as $i\to\infty$, where the measure $\nu$ is such that  so that $\nu(O)>0$ for every open set $O$;
		\medskip
		
		\item there is  a sequence of  immersed surfaces $\Sigma_i\subset M$  homotopic to ${\bf D}(\partial \phi_i)$ and $f_i\in C^0(\Sigma_i)$ so that  
		$$\lim_{i\to\infty}\frac{1}{{\rm area}_{g_0}(\Sigma_i)}\int_{\Sigma_i}|f_i|dA_{g_0}=0.$$
	\end{itemize}
	The Hausdorff distance between two sets in $\H^3$ is denoted by $d_H$.
	
	The following theorem corresponds to  Theorem 6.1 of \cite{cal-marques-neves}, where it is assumed that   $\Sigma_i$ is  area-minimizing with respect to some metric. An inspection of the proof shows that one only needs  the areas of $\Sigma_i$ and ${\bf D}(\partial \phi_i)$ to be  comparable and their universal covers to be at a uniform Hausdorff distance from each other.
	\begin{thm}\label{CMN.thm} Assume the existence of $C>0$ and, for all $i\in\N$, a covering $\Omega_i\subset\H^3$ of $\Sigma_i$ so that
		$${\rm area}_{g_0}(\Sigma_i)\leq C{\rm area}_{g_0}({\bf D}(\partial \phi_i))\quad\mbox{and}\quad
		C^{-1}\leq d_H(\Omega_i,D(\partial \phi_i))\leq C.$$
		For every $\gamma\in \mathcal C_0$ there is $\eta_i\in \pi_1(M)<\text{Isom}\,(\H^3)$ such that, after passing to a subsequence, $\eta_i(D(\partial \phi_i))$ converges on compact sets to $D(\gamma)$ and 
		$$\lim_{i\to\infty}\int_{\eta_i(\Omega_i)\cap B_R(0)}|f_i\circ\eta_i^{-1}|dA_{g_0}=0\quad\mbox{for all }R>0.$$
	\end{thm}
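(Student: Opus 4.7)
The plan is to adapt the proof of Theorem 6.1 in \cite{cal-marques-neves} in the way suggested by the paragraph above the statement: the original argument used area-minimality of $\Sigma_i$ only to derive a comparability of areas and a uniform Hausdorff bound on the lifts, both of which now appear as hypotheses. Fix $\gamma \in \mathcal C_0$ and let $F_\gamma \in F(\H^3)$ be a positive orthonormal frame at a basepoint of $D(\gamma)$ whose first two vectors are tangent to $D(\gamma)$, and denote its projection to $F(M)$ by $\bar F_\gamma$. First I would use the weak convergence $\Omega_*\delta_{\phi_i}\to\nu$ together with the full-support assumption on $\nu$ to obtain many near-translates: for any $\epsilon>0$, the open $\epsilon$-neighborhood $\mathcal V\subset F(M)$ of $\bar F_\gamma$ satisfies $\Omega_*\delta_{\phi_i}(\mathcal V)\geq c_\epsilon>0$ for large $i$, which means a fraction at least $c_\epsilon$ of the area of ${\bf D}(\partial\phi_i)$ carries a frame inside $\mathcal V$. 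Each such point $p$ yields an element $\eta_p\in\pi_1(M)$ with $\eta_p(p,\text{frame})$ within $\epsilon$ of $F_\gamma$. Since $D(\partial\phi_i)$ has principal curvatures tending uniformly to zero, a Vitali-type argument on ${\bf D}(\partial\phi_i)$ produces a disjoint family of balls $B_{R_0}(p_{i,k})\subset {\bf D}(\partial\phi_i)$, $k=1,\ldots,N_i$, whose frames lie in $\mathcal V$, with $N_i\geq c'\,{\rm area}_{g_0}({\bf D}(\partial\phi_i))$ for a uniform $c'>0$.

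The pigeonhole step comes next. Using the hypothesis $d_H(\Omega_i,D(\partial\phi_i))\leq C$ and choosing $R_0\geq 2(R+C)$, the sets $\eta_{p_{i,k}}^{-1}(B_R(0))\cap \Omega_i$ become pairwise disjoint (or have universally bounded overlap) after translation back to $\Omega_i$, so
\begin{equation*}
\sum_{k=1}^{N_i}\int_{\eta_{p_{i,k}}(\Omega_i)\cap B_R(0)}|f_i\circ\eta_{p_{i,k}}^{-1}|\,dA_{g_0} \leq C_1 \int_{\Sigma_i}|f_i|\,dA_{g_0},
\end{equation*}
with $C_1$ bounded in terms of $C$ and the area-comparability constant. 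Combining with the hypothesis $\int_{\Sigma_i}|f_i|\,dA_{g_0}=o({\rm area}_{g_0}(\Sigma_i))$ and the bound ${\rm area}_{g_0}(\Sigma_i) \leq C\,{\rm area}_{g_0}({\bf D}(\partial\phi_i))\leq (C/c')N_i$, pigeonhole yields an index $k=k_i$ for which the $k$-th term is $o(1)$. Setting $\eta_i:=\eta_{p_{i,k_i}}$, the basepoints of $\eta_i(D(\partial\phi_i))$ sit within $\epsilon$ of that of $D(\gamma)$ while the principal curvatures of $\eta_i(D(\partial\phi_i))$ tend to zero; elliptic regularity then yields $C^{2,\alpha}$-convergence on compact sets of $\eta_i(D(\partial\phi_i))$ to a totally geodesic disc through the limit basepoint with frame $F_\gamma$, which must be $D(\gamma)$. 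A diagonal argument in $\epsilon \to 0$ and $R\to\infty$ finishes the proof.

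The main obstacle is verifying the bounded-overlap property of the sets $\eta_{p_{i,k}}^{-1}(B_R(0))\cap \Omega_i$. In the original CMN argument the area-minimality of $\Sigma_i$ controlled the local geometry of $\Omega_i$ directly; here one must instead extract it from the Hausdorff bound and the area comparability. The key geometric input needed is that $\Omega_i$ has uniformly bounded local sheet multiplicity near each $\eta_{p_{i,k}}^{-1}(0)$: any additional sheet accumulating inside $B_{R+2C}$ would contribute area comparable to that of a flat totally geodesic disc of radius $R$, eventually violating the global bound ${\rm area}_{g_0}(\Sigma_i)\leq C\,{\rm area}_{g_0}({\bf D}(\partial\phi_i))$ through the $N_i$ translates being counted. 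Once this local bound on $\Omega_i$ is secured, the averaging and pigeonhole argument above proceeds routinely.
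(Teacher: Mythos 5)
The paper does not actually spell out a proof of this statement; it simply refers to Theorem~6.1 of Calegari--Marques--Neves and records that the only facts about area-minimality the CMN proof uses are the two hypotheses now stated (comparable areas, uniform Hausdorff distance). Your reconstruction has the right overall shape: use full support of $\nu$ to find a positive fraction of frames near $\bar F_\gamma$, average, pigeonhole, then pass to a limiting totally geodesic disc via $C^{2,\alpha}$ convergence. The elliptic-regularity and diagonalization endgame is also correct.

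However, the way you set up the averaging step introduces a genuine gap, and you have in fact identified it yourself: the bound
\[
\sum_{k=1}^{N_i}\int_{\eta_{p_{i,k}}(\Omega_i)\cap B_R(0)}|f_i\circ\eta_{p_{i,k}}^{-1}|\,dA_{g_0}\ \leq\ C_1\int_{\Sigma_i}|f_i|\,dA_{g_0}
\]
requires that the sets $\eta_{p_{i,k}}^{-1}(B_R(0))\cap\Omega_i$, after projection to $\Sigma_i$, cover each point with uniformly bounded multiplicity. Disjointness of the sets upstairs in $\Omega_i$ (which is what your choice $R_0\geq 2(R+C)$ gives) does not yield this: the deck orbit $G_i\tilde q$ of a single $\tilde q\in\Omega_i$ can land in many of the disjoint balls $B_R(\eta_{p_{i,k}}^{-1}(0))$, since there are $N_i\sim \mathrm{area}_{g_0}(\mathbf D(\partial\phi_i))$ of them and the orbit has comparable cardinality in the relevant region. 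Your proposed repair --- that an extra sheet of $\Omega_i$ through such a ball must carry area comparable to a flat disc of radius $R$ --- is exactly a monotonicity-type lower area bound, which holds for stationary surfaces but \emph{not} for the arbitrary immersed surfaces $\Sigma_i$ allowed by the present hypotheses. So this step does not go through as written, and it is precisely the point where the original CMN argument leaned on minimality.

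The fix is to avoid the discretization altogether and run the Chebyshev argument continuously, since then the bounded-overlap question never arises. Set
\[
F_i(p):=\int_{\Omega_i\cap B_R(\tilde p)}|\tilde f_i|\,dA_{g_0},\qquad p\in \mathbf D(\partial\phi_i),
\]
where $\tilde p$ is any lift and $\tilde f_i$ is the $G_i$-invariant lift of $f_i$; this is well defined by equivariance. Choosing fundamental domains $\mathcal D_0\subset D(\partial\phi_i)$ and $\mathcal D'\subset\Omega_i$ for $G_i$ and applying Fubini together with the $G_i$-invariance of $|\tilde f_i|$ gives
\[
\int_{\mathbf D(\partial\phi_i)}F_i\,dA_{g_0}
=\int_{\mathcal D'}|\tilde f_i(z)|\,\mathrm{area}_{g_0}\big(D(\partial\phi_i)\cap B_R(z)\big)\,dA_{g_0}(z),
\]
since $\sum_{g\in G_i}\mathrm{area}(g^{-1}\mathcal D_0\cap B_R(z))=\mathrm{area}(D(\partial\phi_i)\cap B_R(z))$. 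For $z\in\Omega_i\subset N_C(D(\partial\phi_i))$ and $D(\partial\phi_i)$ with principal curvatures tending to $0$, the factor $\mathrm{area}_{g_0}(D(\partial\phi_i)\cap B_R(z))$ is bounded by a constant $C_2(R,C)$ depending only on $R$ and the Hausdorff constant. Hence $\int_{\mathbf D(\partial\phi_i)}F_i\leq C_2\int_{\Sigma_i}|f_i|$, and dividing by $\mathrm{area}_{g_0}(\mathbf D(\partial\phi_i))$ and using the area comparability shows that the \emph{average} of $F_i$ over $\mathbf D(\partial\phi_i)$ tends to zero. Intersecting with the set of positive $\Omega_*\delta_{\phi_i}$-measure where the frame is $\epsilon$-close to $\bar F_\gamma$ then produces the desired point $p_i$ and translation $\eta_i$; this uses only the stated hypotheses and no control on the local multiplicity of $\Omega_i$.
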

	%This result corresponds to  Theorem 6.1 of \cite{cal-marques-neves}, where it is assumed that   $\Sigma_i$ is  area-minimizing with respect to some metric. An inspection of the proof shows that one only needs  the areas of $\Sigma_i$ and ${\bf D}(\partial \phi_i)$ to be  comparable and $\Omega_i$  to be at  a uniform Hausdorff distance from $D(\partial \phi_i).$

	\bibliographystyle{amsbook}

\end{document}